\newcommand{\sref}[1]{\hyperref[#1]{\S~\ref*{#1}}}
\newcommand{\aref}[1]{\hyperref[#1]{Appendix~\ref*{#1}}}
\newcommand{\lref}[1]{\hyperref[#1]{Lemma~\ref*{#1}}}
\newcommand{\tref}[1]{\hyperref[#1]{Theorem~\ref*{#1}}}
\newcommand{\cref}[1]{\hyperref[#1]{Corollary~\ref*{#1}}}
\newcommand{\fref}[1]{\hyperref[#1]{Figure~\ref*{#1}}}
\newcommand{\pref}[1]{\hyperref[#1]{Proposition~\ref*{#1}}}
\newcommand{\old}[1]{}
\def\clap#1{\hbox to 0pt{\hss#1\hss}}
\def\mathllap{\mathpalette\mathllapinternal}
\def\mathrlap{\mathpalette\mathrlapinternal}
\def\mathclap{\mathpalette\mathclapinternal}
\def\mathllapinternal#1#2{\llap{$\mathsurround=0pt#1{#2}$}}
\def\mathrlapinternal#1#2{\rlap{$\mathsurround=0pt#1{#2}$}}
\def\mathclapinternal#1#2{\clap{$\mathsurround=0pt#1{#2}$}}
\renewcommand{\MRhref}[2]{\href{http://www.ams.org/mathscinet-getitem?mr=#1}{MR#1}}
\def\@rst #1 #2other{#1}
\renewcommand\MR[1]{\relax\ifhmode\unskip\spacefactor3000 \space\fi
  \MRhref{\expandafter\@rst #1 other}{#1}}
\newif\if@borderstar
   \def\bordermatrix{\@ifnextchar*{%
       \@borderstartrue\@bordermatrix@i}{\@borderstarfalse\@bordermatrix@i*}%
   }
   \def\@bordermatrix@i*{\@ifnextchar[{\@bordermatrix@ii}{\@bordermatrix@ii[()]}}
   \def\@bordermatrix@ii[#1]#2{%
   \begingroup
     \m@th\@tempdima8.75\p@\setbox\z@\vbox{%
       \def\cr{\crcr\noalign{\kern 2\p@\global\let\cr\endline }}%
       \ialign {$##$\hfil\kern 2\p@\kern\@tempdima & \thinspace %
       \hfil $##$\hfil && \quad\hfil $##$\hfil\crcr\omit\strut %
       \hfil\crcr\noalign{\kern -\baselineskip}#2\crcr\omit %
       \strut\cr}}%
     \setbox\tw@\vbox{\unvcopy\z@\global\setbox\@ne\lastbox}%
     \setbox\tw@\hbox{\unhbox\@ne\unskip\global\setbox\@ne\lastbox}%
     \setbox\tw@\hbox{%
       $\kern\wd\@ne\kern -\@tempdima\left\@firstoftwo#1%
         \if@borderstar\kern2pt\else\kern -\wd\@ne\fi%
       \global\setbox\@ne\vbox{\box\@ne\if@borderstar\else\kern 2\p@\fi}%
       \vcenter{\if@borderstar\else\kern -\ht\@ne\fi%
         \unvbox\z@\kern-\if@borderstar2\fi\baselineskip}%
         \if@borderstar\kern-2\@tempdima\kern2\p@\else\,\fi\right\@secondoftwo#1 $%
     }\null \;\vbox{\kern\ht\@ne\box\tw@}%
   \endgroup
   }
\newtheorem{theorem}{Theorem}[section]
\newtheorem{lemma}[theorem]{Lemma}
\newtheorem{proposition}[theorem]{Proposition}
\newtheorem{corollary}[theorem]{Corollary}
\newtheorem{conjecture}[theorem]{Conjecture}
\theoremstyle{definition}
\theoremstyle{remark}
\numberwithin{equation}{section}
\newcommand{\Tr}{\operatorname{Tr}}
\newcommand{\Z}{\mathbb Z}
\newcommand{\R}{\mathbb R}
\newcommand{\C}{\mathbb C}
\newcommand{\Q}{\mathbb Q}
\newcommand{\N}{\mathbb N}
\newcommand{\No}{\mathcal N}
\newcommand{\E}{\mathbb E}
\newcommand{\G}{\mathcal G}
\newcommand{\U}{\mathrm{U}}
\newcommand{\SU}{\mathrm{SU}}
\newcommand{\SL}{{\mathrm{SL}_2(\C)}}
\newcommand{\GL}{{\mathrm{GL}_2(\C)}}
\newcommand{\Aut}{\operatorname{Aut}}
\newcommand{\Qdet}{\operatorname{qdet}}
\renewcommand{\L}{\mathscr{L}}
\newcommand{\Gv}{\mathscr{G}}
\newcommand{\Zv}{\mathscr{Z}}
\newcommand{\be}{\begin{equation}}
\newcommand{\ee}{\end{equation}}
\newcommand{\A}{\mathsf{A}}
\newcommand{\LL}{\mathbb{L}}
\newcommand{\rf}[1]{\begin{rotatebox}{90}{$#1$}\end{rotatebox}}
\newcommand{\srf}[1]{\smash{\begin{rotatebox}{90}{$#1$}\end{rotatebox}}}
\newcommand{\ffrac}[2]{\genfrac{}{}{0pt}{1}{#1}{#2}}
\begin{document}

\title[Spanning trees of graphs on surfaces and the intensity of LERW]{Spanning trees of graphs on surfaces and\\ the intensity of loop-erased random walk \\ on planar graphs}
\author{Richard W.\!~Kenyon}
\address{\href{http://www.math.brown.edu/~rkenyon/}{Richard W.\!~Kenyon}\\ Brown University\\ Providence, RI 02912, USA}
\thanks{The research of RWK was supported by the NSF}
\author{David B.~\!Wilson}
\address{\href{http://dbwilson.com}{David B.~\!Wilson}\\ Microsoft Research\\ Redmond, WA 98052, USA}
\subjclass[2010]{60C05, 82B20, 05C05, 05C50}
\keywords{Uniform spanning tree, loop-erased random walk, abelian sandpile model, vector-bundle Laplacian}
\old{
\begin{abstract}
  We show how to compute the probabilities of various connection
  topologies for uniformly random spanning trees on graphs embedded in
  surfaces.  As an application, we show how to compute the
  ``intensity'' of the loop-erased random walk in $\Z^2$, that is, the
  probability that the walk from $(0,0)$ to $\infty$ passes through a
  given vertex or edge.  For example, the probability that it passes
  through $(1,0)$ is $5/16$; this confirms a conjecture from 1994
  about the stationary sandpile density on $\Z^2$.  We do the
  analogous computation for the triangular lattice, honeycomb lattice
  and $\Z\times\R$, for which the probabilities are $5/18$, $13/36$,
  and $1/4-1/\pi^2$ respectively.
\end{abstract}}
\maketitle

\section{Introduction}

A spanning tree of a graph is a collection of edges which connects all
the vertices and has no cycles.
 Spanning trees were first
investigated by Kirchhoff in his study of electrical resistor networks
\cite{Kirchhoff}; in particular he showed that the determinant of the combinatorial
Laplacian counts spanning trees.

The uniform random spanning tree (UST) is a
well studied probability model, related to several other probability models.
For example, the loop-erased random walk of Lawler (see
\cite{MR1117680,MR1703133,MR2677157}) was shown by Pemantle
\cite{Pemantle} to have the same distribution as the paths connecting
vertices in the uniform spanning tree.  The abelian sandpile model of
self-organized criticality \nocite{BTK} was shown by Majumdar and Dhar
\cite{Majumdar-Dhar} to be closely related to spanning trees (recurrent states in the
sandpile model are in bijection with spanning trees). Lawler, Schramm, and Werner
\cite{LSW} showed that the branches of the spanning tree on $\Z^2$ converge in the scaling
limit to $\text{SLE}_2$ and the ``peano curve'' winding between the spanning tree and its
dual converges to $\text{SLE}_8$.

We show here how to compute the probabilities of various connection
topologies for uniform random spanning trees on graphs embedded in
surfaces.  As an application, we show how to compute the ``intensity''
of the loop-erased random walk in~$\Z^2$, that is, the probability
that the walk from $(0,0)$ to $\infty$ passes through a given vertex
or edge.  For example, the probability that it passes through $(1,0)$
is $5/16$; this confirms a conjecture from 1994 about the stationary
sandpile density on $\Z^2$.  We do the analogous computation for the
triangular lattice, honeycomb lattice and $\Z\times\R$, for which the
probabilities are $5/18$, $13/36$, and $1/4-1/\pi^2$ respectively.

Our techniques involve applying the vector bundle Laplacian
\cite{Kenyon.bundle} and asymptotics of the ``Green's function derivative'' for planar
graphs, together with a generalization of the grove counting techniques of \cite{KW1}
to graphs on annuli.

\subsection{Response matrices and groves}

Let $\G$ be a graph (undirected, with multiple edges and self-loops allowed),
and let $c\colon\G\to\R_{>0}$ be a positive conductance
on each edge.  Our graphs will be finite,
except in sections~\ref{Greens} and~\ref{LERW},
where we take limits to infinite lattices.
Let $\No$ be a subset of $\G$'s vertices such that every
vertex of $\G$ is connected to some vertex of $\No$.  The triple
$(\G,c,\No)$ is a \textbf{resistor network}.  Associated to this data
is the Dirichlet-to-Neumann matrix (also called the response matrix) $L$,
defined as follows.  Given a function $f:\No\to\R$, find its harmonic
extension $h$ on $\G$, that is a function on the vertices of $\G$ that
is harmonic on $\G\setminus\No$ and has values $f$ on $\No$.  Then
$L(f) = -\Delta(h)|_\No$ is a linear function of $f$, where $\Delta$
is the (positive semidefinite) graph Laplacian.  In electrical terms,
$L(f)$ gives the current flow into the nodes $\No$ when they are held
at $f$ volts.  While it is not obvious from this definition,
$L$ is a symmetric negative semidefinite matrix.

\textbf{Circular planar networks} are planar resistor networks
where $\No$ is a subset of the vertices on the outer face
listed in cyclic order.
These networks were studied in \cite{CdV,CGV,CIM}, where the set
of matrices which
occur as response matrices
were classified: they are precisely the matrices
whose ``non-interlaced'' minors are nonnegative.  (A non-interlaced
minor is one in which there are no 4 indices $a<b<c<d$ for which $a$
and $c$ are rows and $b$ and $d$ are columns or vice versa.)
Furthermore, these authors showed how to construct a circular planar
network having a given such response matrix~$L$.

In a resistor network a \textbf{grove} is a spanning forest (set of
edges with no cycles) in which every component contains at least one
vertex in $\No$.  (Our term grove is a generalization, to arbitrary
graphs and arbitrary connections, of the groves defined by Carroll and
Speyer in \cite{CS}.)
A spanning tree on a large graph (such as $\Z^2$) can be studied
by cutting the large graph into two subgraphs which are joined at nodes
along their boundaries.  The spanning tree restricted to either subgraph is a grove.
In \cite{KW1} we studied the natural
probability measure on groves (where each grove occurs with
probability proportional to the product of its edge weights), showing
for circular planar graphs how to compute the probability that a
random grove has a given connection topology in terms of the entries
in $L$.

\subsection{Graphs on surfaces}
We study here the same problem for a graph $\G$ embedded on a surface
$\Sigma$.  Here the usual notion of response matrix is not rich enough
to extract information about the underlying topological structure of a
grove.  Given a resistor network on a surface $\Sigma$, a natural
generalization of the response matrix is a matrix-valued function $\L$
on the representation variety $\text{Hom}(\pi_1(\Sigma),H)$ of
flat $H$-connections on $\G$; here $H=\C^*$ or $\SL$.
We show here how $\L$ can be used to
compute connection probabilities of (certain types of) groves on $\G$.

The question of characterizing which matrices $\L$ occur as a function
of the topology of~$\Sigma$ remains open.  See Lam and Pylyavskyy
\cite{LP:annulus} for related work in the case when the surface
$\Sigma$ is an annulus.

We give special attention to the case where the surface $\Sigma$ is an annulus;
this is the easiest case beyond the planar one (but already quite involved)
and also has applications to the study of spanning trees on planar graphs.
Connectivity questions within a spanning tree involving the path to $\infty$ can be
studied by viewing $\infty$ as one of the nodes of the surface graph, on a boundary
by itself.  When the other vertices are on the same face, the relevant surface is the
annulus.

\subsection{Applications to planar graphs}
Using these techniques one can in principle compute the probability
that the path of the uniform spanning tree from $a$ to $b$ in a planar
graph passes through a given set of edges or vertices (as in
\fref{ZZ-tree}), assuming the response matrix $\L$ can be evaluated.
When one of the nodes is~$\infty$, it is more convenient to work with
the Green's function $\Gv$.  For $\Z^2$ and the honeycomb and
triangular lattices, the usual Green's function $G$ is known, and we
modify the $H$-connection and use the translation and $180^\circ$ rotational symmetries of these graphs to extract
the additional information in $\Gv$ in closed form.
For $\Z^2$, our method shows that the probability that the path from the
origin to $\infty$ passes through a particular edge or vertex
(see \fref{ZZ-intensity}) is in $\Q(\frac1{\pi})$.  For the honeycomb and triangular lattices
these probabilities are in $\Q(\frac{\sqrt{3}}{\pi})$
(see \fref{hex-intensity} and \fref{tri-intensity}).

For example, we show that the probability that the
loop-erased random walk in $\Z^2$ from $(0,0)$ to $\infty$ contains the point
$(1,0)$ is $5/16$.  (See \fref{ZZ-tree}.)  This value was predicted by Levine and Peres
\cite{LP:54} and Poghosyan and Priezzhev \cite{PP:54}, by relating
this probability to the average density of the stationary abelian
sandpile model.
\begin{figure}[tbp]
  \begin{center}
    \includegraphics[height=2.5in]{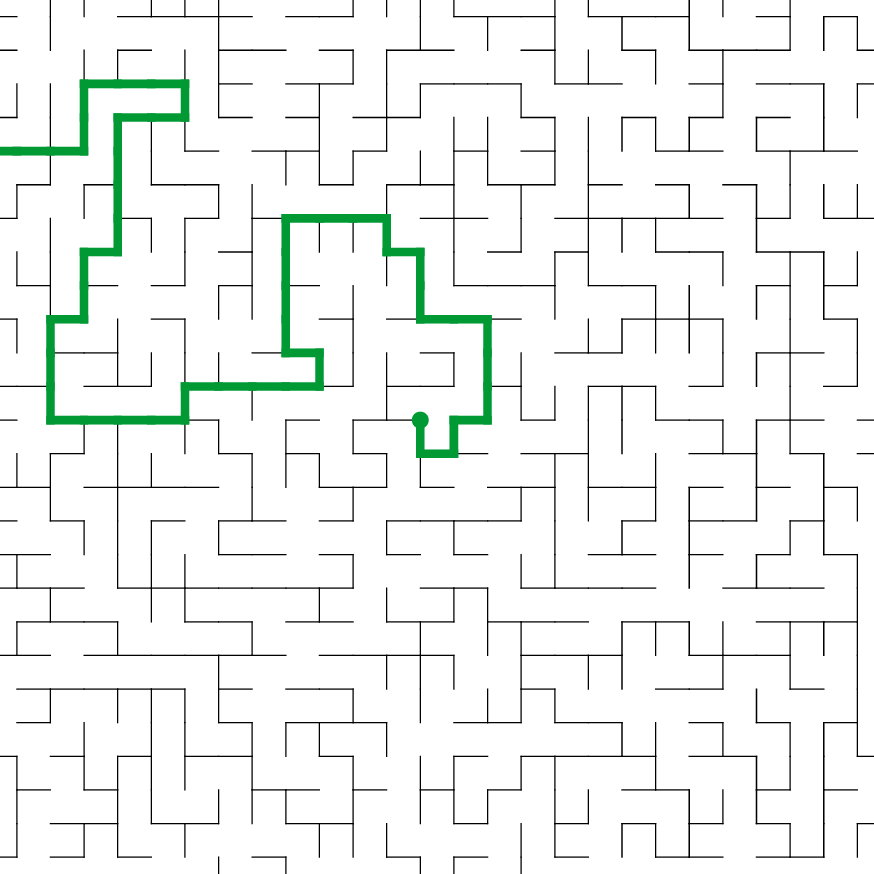}
  \end{center}
  \caption{A portion of the uniform spanning tree on $\Z^2$, with the path from
    $(0,0)$ to $\infty$ shown in bold.  The uniform spanning tree on
    $\Z^2$ can be constructed as a weak limit of uniform spanning
    trees on large boxes.
    The limiting measure
    exists, is unique, and is supported on trees of $\Z^2$
    \cite{Pemantle}.   Almost surely, within the uniform
    spanning tree of $\Z^2$, each vertex has a unique infinite path
    starting from it \cite{BLPS} (see also \cite{LP:book}).
    The path to infinity is a loop-erased random walk
    (LERW) \cite{Pemantle} (see also \cite{Wilson}).}
  \label{ZZ-tree}
\end{figure}
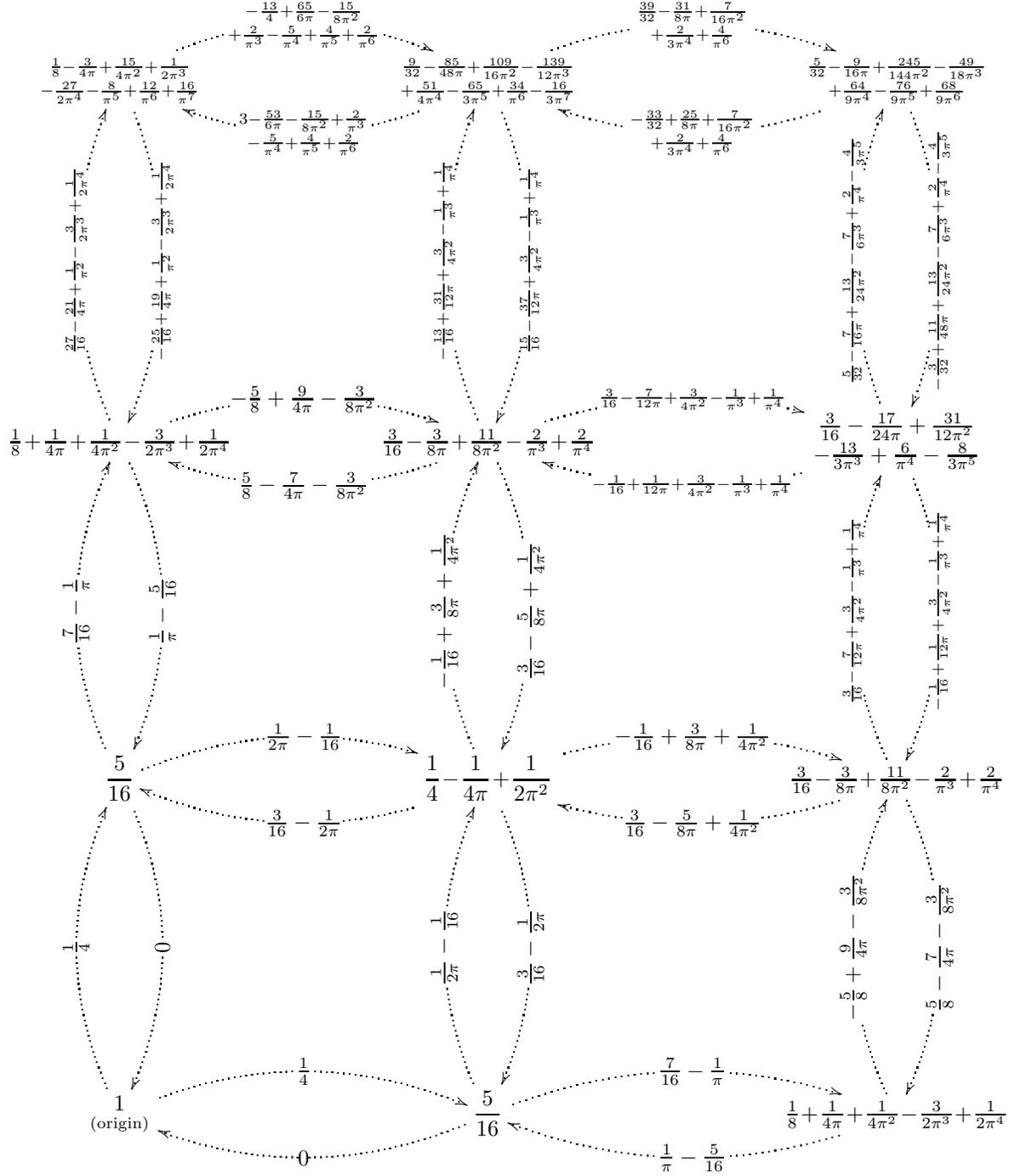
\begin{figure}[thbp]
\newcommand{\figvsep}{1.7in}
\newcommand{\fighsep}{0.2in}
\centerline{
\mbox{{\rule{-0.6in}{0.5in}}\smash{\rlap{
$
  \xymatrix{
\ffrac{\frac{1}{8}-\frac{3}{4 \pi}+\frac{15}{4 \pi ^2}+\frac{1}{2 \pi ^3}}{-\frac{27}{2 \pi ^4}-\frac{8}{\pi ^5}+\frac{12}{\pi ^6}+\frac{16}{\pi ^7}}
\ar@/^2.0pc/@{.>}[r]|{\ffrac{-\frac{13}{4}+\frac{65}{6 \pi}-\frac{15}{8 \pi ^2}}{+\frac{2}{\pi ^3}-\frac{5}{\pi ^4}+\frac{4}{\pi ^5}+\frac{2}{\pi ^6}}}
 \ar@/^1.6pc/@{.>}[d]|{\rf{\scriptstyle -\frac{25}{16}+\frac{19}{4 \pi}+\frac{1}{\pi ^2}-\frac{3}{2 \pi ^3}+\frac{1}{2 \pi ^4}}}
 &
\ffrac{\frac{9}{32}-\frac{85}{48 \pi}+\frac{109}{16 \pi ^2}-\frac{139}{12 \pi ^3}}{+\frac{51}{4 \pi ^4}-\frac{65}{3 \pi ^5}+\frac{34}{\pi ^6}-\frac{16}{3 \pi ^7}}
\ar@/^2.0pc/@{.>}[r]|{\ffrac{\frac{39}{32}-\frac{31}{8 \pi}+\frac{7}{16 \pi ^2}}{+\frac{2}{3 \pi ^4}+\frac{4}{\pi ^6}}}
 \ar@/^2.0pc/@{.>}[l]|{\ffrac{3 -\frac{53}{6 \pi}-\frac{15}{8 \pi ^2}+\frac{2}{\pi ^3}}{-\frac{5}{\pi ^4}+\frac{4}{\pi ^5}+\frac{2}{\pi ^6}}}
 \ar@/^1.6pc/@{.>}[d]|{\rf{\scriptstyle \frac{15}{16}-\frac{37}{12 \pi}+\frac{3}{4 \pi ^2}-\frac{1}{\pi ^3}+\frac{1}{\pi ^4}}}
 &
\ffrac{\frac{5}{32}-\frac{9}{16 \pi}+\frac{245}{144 \pi ^2}-\frac{49}{18 \pi ^3}}{+\frac{64}{9 \pi ^4}-\frac{76}{9 \pi ^5}+\frac{68}{9 \pi ^6}}
 \ar@/^2.0pc/@{.>}[l]|{\ffrac{-\frac{33}{32}+\frac{25}{8 \pi}+\frac{7}{16 \pi ^2}}{+\frac{2}{3 \pi ^4}+\frac{4}{\pi ^6}}}
 \ar@/^1.6pc/@{.>}[d]|{\rf{\scriptstyle -\frac{3}{32}+\frac{11}{48 \pi}+\frac{13}{24 \pi ^2}-\frac{7}{6 \pi ^3}+\frac{2}{\pi ^4}-\frac{4}{3 \pi ^5}}}
 &
\rule{0pt}{\figvsep}\\
\frac{1}{8}\!+\!\frac{1}{4 \pi}\!+\!\frac{1}{4 \pi ^2}\!-\!\frac{3}{2 \pi ^3}\!+\!\frac{1}{2 \pi ^4}
\ar@/^1.6pc/@{.>}[r]|{\textstyle -\frac{5}{8}+\frac{9}{4 \pi}-\frac{3}{8 \pi ^2}}
\ar@/^1.6pc/@{.>}[u]|{\rf{\scriptstyle \frac{27}{16}-\frac{21}{4 \pi}+\frac{1}{\pi ^2}-\frac{3}{2 \pi ^3}+\frac{1}{2 \pi ^4}}}
 \ar@/^1.6pc/@{.>}[d]|{\rf{\textstyle \frac{1}{\pi}-\frac{5}{16}}}
 &
\frac{3}{16}\!-\!\frac{3}{8 \pi}\!+\!\frac{11}{8 \pi ^2}\!-\!\frac{2}{\pi ^3}\!+\!\frac{2}{\pi ^4}
\ar@/^1.6pc/@{.>}[r]|{\frac{3}{16}-\frac{7}{12 \pi}+\frac{3}{4 \pi ^2}-\frac{1}{\pi ^3}+\frac{1}{\pi ^4}}
\ar@/^1.6pc/@{.>}[u]|{\rf{\scriptstyle -\frac{13}{16}+\frac{31}{12 \pi}+\frac{3}{4 \pi ^2}-\frac{1}{\pi ^3}+\frac{1}{\pi ^4}}}
 \ar@/^1.6pc/@{.>}[l]|{\textstyle \frac{5}{8}-\frac{7}{4 \pi}-\frac{3}{8 \pi ^2}}
 \ar@/^1.6pc/@{.>}[d]|{\rf{\textstyle \frac{3}{16}-\frac{5}{8 \pi}+\frac{1}{4 \pi ^2}}}
 &
\ffrac{\textstyle\frac{3}{16}-\frac{17}{24 \pi}+\frac{31}{12 \pi ^2}}{\textstyle-\frac{13}{3 \pi ^3}+\frac{6}{\pi ^4}-\frac{8}{3 \pi ^5}}
\ar@/^1.6pc/@{.>}[u]|{\rf{\scriptstyle \frac{5}{32}-\frac{7}{16 \pi}+\frac{13}{24 \pi ^2}-\frac{7}{6 \pi ^3}+\frac{2}{\pi ^4}-\frac{4}{3 \pi ^5}}}
 \ar@/^1.6pc/@{.>}[l]|{-\frac{1}{16}+\frac{1}{12 \pi}+\frac{3}{4 \pi ^2}-\frac{1}{\pi ^3}+\frac{1}{\pi ^4}}
 \ar@/^1.6pc/@{.>}[d]|{\rf{\scriptstyle -\frac{1}{16}+\frac{1}{12 \pi}+\frac{3}{4 \pi ^2}-\frac{1}{\pi ^3}+\frac{1}{\pi ^4}}}
 &
\rule{0pt}{\figvsep}\\
\displaystyle \frac{5}{16}
\ar@/^1.6pc/@{.>}[r]|{\textstyle \frac{1}{2 \pi}-\frac{1}{16}}
\ar@/^1.6pc/@{.>}[u]|{\rf{\textstyle \frac{7}{16}-\frac{1}{\pi}}}
 \ar@/^1.6pc/@{.>}[d]|{\rf{\textstyle 0}}
 &
\displaystyle \frac{1}{4}\!-\!\frac{1}{4 \pi}\!+\!\frac{1}{2 \pi ^2}
\ar@/^1.6pc/@{.>}[r]|{\textstyle -\frac{1}{16}+\frac{3}{8 \pi}+\frac{1}{4 \pi ^2}}
\ar@/^1.6pc/@{.>}[u]|{\rf{\textstyle -\frac{1}{16}+\frac{3}{8 \pi}+\frac{1}{4 \pi ^2}}}
 \ar@/^1.6pc/@{.>}[l]|{\textstyle \frac{3}{16}-\frac{1}{2 \pi}}
 \ar@/^1.6pc/@{.>}[d]|{\rf{\textstyle \frac{3}{16}-\frac{1}{2 \pi}}}
 &
\frac{3}{16}\!-\!\frac{3}{8 \pi}\!+\!\frac{11}{8 \pi ^2}\!-\!\frac{2}{\pi ^3}\!+\!\frac{2}{\pi ^4}
\ar@/^1.6pc/@{.>}[u]|{\rf{\scriptstyle \frac{3}{16}-\frac{7}{12 \pi}+\frac{3}{4 \pi ^2}-\frac{1}{\pi ^3}+\frac{1}{\pi ^4}}}
 \ar@/^1.6pc/@{.>}[l]|{\textstyle \frac{3}{16}-\frac{5}{8 \pi}+\frac{1}{4 \pi ^2}}
 \ar@/^1.6pc/@{.>}[d]|{\rf{\textstyle \frac{5}{8}-\frac{7}{4 \pi}-\frac{3}{8 \pi ^2}}}
 &
\rule{0pt}{1.4in}\\
\ffrac{\displaystyle 1}{\text{(origin)}}
\ar@/^1.6pc/@{.>}[r]|{\textstyle \frac{1}{4}}
\ar@/^1.6pc/@{.>}[u]|{\rf{\textstyle \frac{1}{4}}}
 &
\displaystyle \frac{5}{16}
\ar@/^1.6pc/@{.>}[r]|{\textstyle \frac{7}{16}-\frac{1}{\pi}}
\ar@/^1.6pc/@{.>}[u]|{\rf{\textstyle \frac{1}{2 \pi}-\frac{1}{16}}}
 \ar@/^1.6pc/@{.>}[l]|{\textstyle 0}
 &
\frac{1}{8}\!+\!\frac{1}{4 \pi}\!+\!\frac{1}{4 \pi ^2}\!-\!\frac{3}{2 \pi ^3}\!+\!\frac{1}{2 \pi ^4}
\ar@/^1.6pc/@{.>}[u]|{\rf{\textstyle -\frac{5}{8}+\frac{9}{4 \pi}-\frac{3}{8 \pi ^2}}}
 \ar@/^1.6pc/@{.>}[l]|{\textstyle \frac{1}{\pi}-\frac{5}{16}}
 &
\rule{0pt}{\figvsep}\\
\rule{1.6in}{0pt} & \rule{2.1in}{0pt}& \rule{2.1in}{0pt}& \rule{1.6in}{0pt}
}
$
}}
\phantom{\rule[-6.7in]{6.4in}{0pt}}
}}
\caption{Intensity of loop-erased random walk on $\Z^2$.  The origin
  is at the lower-left, and directed edge-intensities as well as
  vertex-intensities of the LERW are shown.  (See also \fref{square-intensity}.)}
  \label{ZZ-intensity}
\end{figure}

The connection between the spanning trees and the
abelian sandpile model was discovered by Majumdar and Dhar
\cite{Majumdar-Dhar}, and Priezzhev \cite{Priezzhev} used this
connection to compute the height distribution of the abelian sandpile
model, in terms of two integrals that could not be evaluated in closed
form.  Grassberger evaluated these integrals numerically, and
conjectured that the stationary density of the sandpile on $\Z^2$ is
$17/8$.  Later Jeng, Piroux, and Ruelle \cite{JPR} showed how to
express one of these two integrals in terms of the
other, and determined the sandpile height distribution in closed form,
under the assumption that the remaining integral, which numerically is
$0.5\pm 10^{-12}$, is exactly $1/2$.
Our derivation of this probability that LERW passes through $(1,0)$
confirms these conjectures (although our methods are different), and
shows that this aforementioned integral is exactly~$1/2$.

While we were writing up our results, Poghosyan, Priezzhev, and Ruelle
independently found another proof that the probability of visiting
$(1,0)$ is $5/16$ \cite{PPR:54}.  (They also asked about the probability
about visiting other points, and remarked that the
probability that the LERW visits $(1,1)$ is numerically close to
$2/9$.  This differs from the true value of $1/4-1/(4\pi)+1/(2\pi^2)$ by about $10^{-3}$.)

There are some interesting coincidences in the (undirected) edge
intensities of loop-erased random walk.  For each of the square,
triangular, and honeycomb lattices, there are several groups of edges
which are unrelated by any symmetry of the lattice for which the
undirected edge intensities are identical (see
Figures~\ref{square-intensity}, \ref{hex-intensity}, and
\ref{tri-intensity}).  We do not have an explanation for this
phenomenon.

\section{Bundles and connections}

Let $\G$ be a graph.  Given a fixed vector space $V$, a
\textbf{$V$-bundle}, or simply a \textbf{vector bundle} $B$ on~$\G$ is
the choice of a vector space $V_v$ isomorphic to $V$ for every
vertex~$v$ of~$\G$. We identify the vector bundle with the vector
space $V_{\G}=\oplus_{v}V_v \cong V^{|\G|}$.  A \textbf{section} of a
vector bundle $B$ is an element of $V_{\G}$.

If $H$ is a subgroup of $\Aut(V)$, an \textbf{$H$-connection} $\Phi$ is
the choice for each directed edge $e=(v,w)$ of~$\G$ of an isomorphism
$\phi_{v,w}\in H$ between the corresponding vector spaces
$\phi_{v,w}:V_v\to V_{w}$, with the property that
$\phi_{v,w}=\phi_{w,v}^{-1}$. This isomorphism is called the
\textbf{parallel transport} of vectors in $V_v$ to vectors in
$V_{w}$.  Given an oriented cycle $\gamma$ in $\G$ starting at~$v$,
the \textbf{monodromy} of the connection is the element of
$\text{Aut}(V_v)$ which is the product of these isomorphisms around
$\gamma$.  Monodromies starting at different vertices on~$\gamma$ are
conjugate.

Two connections $\Phi=\{\phi_e\}$ and $\Phi'=\{\phi'_e\}$ are
\textbf{gauge equivalent} if there are maps $\psi_v:V_v\to V_v$ such
that $\phi_{v,w}\circ\psi_v = \psi_{w}\circ\phi'_{v,w}$ for all
vertices $v$ and $w$ of $\G$.

It is useful to extend the notion of bundle and connection to the
edges as well: define for each edge $e$ of $\G$ a copy $V_e$ of $V$,
and define maps $\phi_{v,e}:V_v\to V_e$ whenever~$v$ is an endpoint
of~$e$, with the property that $\phi_{e,w}\circ\phi_{v,e}=\phi_{v,w}$
whenever edge $e$ joins vertices $v$ and $w$.

A \textbf{line bundle} is a $V$-bundle where $V\cong\C$, the
$1$-dimensional complex vector space. In this case if we choose a
basis for each $\C$ then the parallel transport is just multiplication
by an element of $\C^*=\C\setminus\{0\}$.  Furthermore, the monodromy
of a cycle is in $\C^*$ and does not depend on the start vertex of the
cycle.

In this paper we will take $V=\C^1$ or $\C^2$, and use $H=\C^*$- or
$H=\SL$-connections.

\subsection{Laplacian\texorpdfstring{ $\Delta$}{}}

Let $\G$ be a graph with an $H$-connection and let $c\colon E\to\R_{>0}$ be a conductance associated to each edge.
We then define $\Delta:V_{\G}\to V_{\G}$ acting on sections by the formula
\begin{equation}\label{laplaciandef}
\Delta f(v) = \sum_{w:(v,w)\in E} c_{v,w}(f(v)-\phi_{w,v}f(w)).
\end{equation}
A section is said to be \textbf{harmonic} if it is in the kernel of $\Delta$.

\enlargethispage{17pt}
We define an operator $d$
from sections of the bundle over vertices to sections over the edges,
for an oriented edge $e=(v,w)$, by
\[df(e) = c_{v,w}(\phi_{v,e}f(v)-\phi_{w,e}f(w)),\]
and its ``adjoint''
\[d^*\omega(v) = \sum_{e\sim v} \phi_{e,v}\omega(e).\]
Then the Laplacian can be written $\Delta = d^*d$ \cite{Kenyon.bundle}.

A \textbf{cycle-rooted spanning forest (CRSF)} in a graph is a set of
edges each of whose components contains a unique cycle, that is, has
as many vertices as edges. A component of a CRSF is called a
\textbf{cycle-rooted tree (CRT)}.

\begin{theorem}[\cite{Forman,Kenyon.bundle}]\label{crsfthm1}
For a $\C^*$-connection,
\[\det\Delta=\sum_{\textrm{\rm CRSFs}}\,\, \prod_{\text{\rm edges $e$}} c(e)\prod_{\text{\rm cycles $C$}}\left(2-w(C)-\frac1{w(C)}\right),\]
where the sum is over cycle-rooted spanning forests, where the first
product is over all edges of the CRSF and the second is over cycles of
the CRSF, and $w(C)$ is the monodromy of the cycle $C$.
\end{theorem}

For a $\U_1$-connection, $\Delta$ is Hermitian and positive semidefinite
\cite{Kenyon.bundle}.
The monodromy of a cycle is in $\U_1$ and so $2-w(C)-1/w(C)\ge 0$.
We can define a probability measure on CRSFs where each CRSF
has probability proportional to its weight $\prod_{e} c(e)\prod_C\big(2-w(C)-\frac1{w(C)}\big)$,
provided there is a CRSF with nonzero weight.

A similar result holds for an $\SL$-connection. Now $\Delta$ is a
quaternion-Hermitian matrix, that is, a matrix with entries in $\GL$
which satisfies $\Delta_{i,j}=\Delta_{j,i}^*$, where
$\begin{bmatrix}a&b\\c&d\end{bmatrix}^*=\begin{bmatrix}d&-b\\-c&a\end{bmatrix}$.
Its q-determinant counts CRSFs:

\begin{theorem}[\cite{Kenyon.bundle}]\label{crsfthm2}
For an $\SL$-connection,
\[\Qdet\Delta=\sum_{\text{\rm CRSFs}}\,\,\prod_{\text{\rm edges $e$}} c(e)\prod_{\text{\rm cycles $C$}}(2-\Tr w(C)),\]
where the sum is over cycle-rooted spanning forests,
where the first product is over all edges of the CRSF and the second
is over cycles of the CRSF, and $w(C)$ is the monodromy of the cycle $C$.
\end{theorem}

\enlargethispage{12pt}
In the case of an $\SU_2$ connection, any cycle with a nontrivial
monodromy has a positive weight, so these weights define a natural
probability measure, provided there is a CRSF with nonzero weight.

For information on q-determinants, see \cite{Dyson}; for the purposes
of this paper one can define $\Qdet M=\sqrt{\det \widetilde M},$ where
if $M$ is an $N\times N$ matrix with entries in $\GL$ then $\widetilde
M$ is the $2N\times 2N$ matrix with $\C$ entries obtained by replacing
each entry of $M$ by its $2\times 2$ block of complex numbers. In the
cases of primary interest $M$ is a quaternion-Hermitian, or
``self-dual'', matrix; for self-dual matrices $\Qdet$ is a polynomial
in the matrix entries. Matrices with $\GL$ entries enjoy many of the
properties of usual matrices: for example, multiplication and addition
work the same way. The inverse of a self-dual matrix is well-defined
and is both a left- and right-inverse, see e.g., \cite{Dyson}.

\subsection{Dirichlet boundary conditions}\label{dirichletBC}

If $B\subset \G$ is a set of vertices, the Laplacian with Dirichlet
boundary conditions at $B$ is defined on sections over $\G\setminus B$
by the same formula \eqref{laplaciandef} above with $v\in \G\setminus
B$ and the sum over all of $\G$.  In other words $\Delta$ is just a
submatrix of the usual Laplacian on $\G$.  Its determinant also has an
interpretation. A CRSF on a graph with boundary $B$ is a set of edges
such that each component is either a CRT not containing any vertex of
$B$ or else a tree containing a single vertex of $B$.
(When $B=\varnothing$ this specializes to the previous definition.)
In this setting
Theorems~\ref{crsfthm1} and \ref{crsfthm2} have the same statements
(where tree components do not have any monodromy term). See
\cite{Kenyon.bundle}.

\subsection{Green's function \texorpdfstring{$\Gv$}{G}}

The usual Green's function $G$ for the standard Laplacian (with Dirichlet
boundary conditions) is the inverse of the Laplacian.  It has the
probabilistic interpretation that $G_{p,q}$ is $(\sum_r c_{q,r})^{-1}$
times the expected number of visits to $q$ of a simple random walk
started at $p$ (and stopped at the boundary); equivalently, it is
$(\sum_r c_{q,r})^{-1}$ times the sum over all paths from $p$ to $q$
which do not hit the boundary, of the probability of the path.

In the case of a graph with connection, the Green's function $\Gv$ is
again the inverse of the Laplacian, and has a similar probabilistic
interpretation:
\begin{proposition} \label{Gv}
  $\Gv_{p,q}$ is $(\sum_r c_{q,r})^{-1}$ times the sum over all paths
  from $p$ to $q$ of the product of the parallel transports
  along the path (from $q$ to $p$) times the path probability (from
  $p$ to $q$), when the sum converges absolutely.  
  This sum converges absolutely for finite connected graphs with boundary and $\U_1$ or $\SU_2$ connections,
  and will be matrix-valued in the case of an $\SU_2$ connection.
\end{proposition}

\begin{proof}
Using the above definition of $\Gv$ as a sum over paths,
\[\sum_r \Gv_{p,r}\Delta_{r,q}=
\Gv_{p,q}\sum_{r\sim q} c_{q,r}-\sum_{r\sim
  q}\Gv_{p,r}c_{r,q}\phi_{q,r}\] and since any nontrivial path to $q$
must have last step from a neighbor of $q$, this equals zero unless
$p=q$ and the path has length $0$, in which case the second sum is
zero and the first term is $(\sum_r c_{q,r})^{-1}\sum_{r} c_{r,q}=1$.
\end{proof}

\subsection{Response matrix \texorpdfstring{$\L$}{L}}

Let $\No$ be a nonempty set of nodes of $\G$, and $n=|\No|$.  For each
node $v$ pick a preferred basis for $V_v$, the vector space over~$v$.

We define an $n\times n$ matrix $\L=\L_{\Phi}$ (with entries in $H$),
the response matrix, or Dirichlet-to-Neumann matrix, from this data:
$\L\colon V^{\No}\to V^{\No}$ is (minus) the Schur complement of the
Laplacian $\Delta$ to $\No$.  That is, $\L$ is defined as follows.
Order the vertices so that $\No$ comes first.  In this ordering the
Laplacian is
\[\Delta=\begin{bmatrix}A&B\\B^*&C\end{bmatrix}.\]
Then $\L=-A+BC^{-1}B^*$.  Note that $C$ is the Laplacian with
Dirichlet boundary conditions at $\No$.
Since $\det C$ is a weighted sum of cycle-rooted groves (defined below)
and $\G$ is connected to $\No$, $\det C$ is positive
for connections in $\U_1$ or $\SU_2$, and $\det C$ is generically nonzero for other connections.

From the viewpoint of harmonic functions, $\L$ is the Dirichlet-to-Neumann matrix:
given $f\in V^{\No}$,
find the unique section $h$ with boundary values $f$ at the nodes and harmonic
at the interior (non-node) vertices.
Then $\L f$ is $-\Delta h$ evaluated at the nodes.
To see this, let $h_1$ be $h$ at the interior vertices, that is,
$h=\begin{bmatrix}f\\h_1\end{bmatrix}$.
If \[\Delta\, h = \begin{bmatrix}A&B\\B^*&C\end{bmatrix}
\begin{bmatrix}f\\h_1\end{bmatrix}=
\begin{bmatrix}c\\0\end{bmatrix},\]
then $B^*f+C h_1=0$, i.e., $h_1=-C^{-1}B^*f$,
and then $c=A f-BC^{-1}B^*f = -\L f$.

The response matrix $\L$ has entries which are functions of the
parallel transports. See \tref{Lentries} below for an explicit
probabilistic interpretation of the entries of~$\L$.
In order to define $\L$ as a matrix one must choose a basis in $V_v$
for each node~$v$.  Base changes in the $V_v$ then act on $\L$ by
conjugation by diagonal matrices with entries in $H$.

\begin{lemma}
  When the Laplacian $\Delta$ is nonsingular, the response matrix $\L$ is given by
  \[\L=-(\Delta^{-1}|_\No)^{-1} = -(\Gv|_\No)^{-1}.\]
\end{lemma}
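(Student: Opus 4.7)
The plan is to derive this from the standard block-matrix inversion identity, applied to the decomposition
\[
\Delta=\begin{bmatrix}A&B\\B^*&C\end{bmatrix}
\]
in which $\No$ indexes the first block. The identity I will use is the Schur complement formula: if $\Delta$ is invertible and $C$ is invertible, then the $\No\times\No$ block of $\Delta^{-1}$ equals the inverse of the Schur complement of $C$, namely
\[
\Delta^{-1}\big|_{\No}=\bigl(A-BC^{-1}B^*\bigr)^{-1}.
\]
Since by definition $\L=-A+BC^{-1}B^*=-(A-BC^{-1}B^*)$, the right-hand side is $(-\L)^{-1}=-\L^{-1}$. Inverting both sides then yields $\L=-(\Delta^{-1}|_{\No})^{-1}$, which is the claim.

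The actual work is therefore to justify the Schur complement formula in the present setting, where matrix entries lie in $\C^*$ (line bundle case) or are $2\times 2$ blocks from $\GL$ (the $\SL$ case). I would do this by direct verification: observe that
\[
\begin{bmatrix}A&B\\B^*&C\end{bmatrix}
\begin{bmatrix}I&0\\-C^{-1}B^*&I\end{bmatrix}
=\begin{bmatrix}A-BC^{-1}B^*&B\\0&C\end{bmatrix},
\]
and then apply a second elementary block operation on the right to zero out $B$. Since the paper has already noted (after \tref{crsfthm2}) that matrices with $\GL$ entries admit the usual algebraic manipulations and that self-dual matrices have two-sided inverses, these block row and column operations are valid over $H$, and $C$ is invertible by the assumption together with the discussion in \sref{dirichletBC} (it is the Dirichlet Laplacian, generically nonsingular, and we are assuming $\Delta$ itself is nonsingular). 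Reading off the $(1,1)$ block of the resulting factorization gives $\Delta^{-1}|_{\No}=(A-BC^{-1}B^*)^{-1}$.

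The only conceptual obstacle is making sure that noncommutativity of the entries in the $\SL$ case does not disrupt the identity; but because the block operations above only involve left-multiplying the rows by invertible blocks, no commutation between entries is ever used, and the identity $\phi_{v,w}=\phi_{w,v}^{-1}$ ensures $\Delta$ and its Schur complement are self-dual so that $\Qdet$-inverses behave as usual. The rest is then purely formal rearrangement.
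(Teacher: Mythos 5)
Your proof is correct and takes essentially the same route as the paper: both exploit the invertibility of the Dirichlet block $C$ to obtain a block-triangular (Schur-complement) factorization of $\Delta=\begin{bmatrix}A&B\\B^*&C\end{bmatrix}$ and then read off the $\No\times\No$ block of $\Delta^{-1}$ as $(A-BC^{-1}B^*)^{-1}=-\L^{-1}$. The only cosmetic slip is that the step clearing the block $B$ is most naturally a left (row) operation using $C^{-1}$; done on the right it would require the inverse of the Schur complement, which is in any case available since $\Delta$ and $C$ are invertible.
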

\begin{proof}
  Write $\Delta= \begin{bmatrix}A&B\\B^*&C\end{bmatrix}$ where $A$ is
  the submatrix indexed by the nodes.  Submatrix~$C$ is invertible since
  it is the Laplacian with Dirichlet boundary conditions.  We have
\[\Delta=\begin{bmatrix}A-BC^{-1}B^*&BC^{-1}\\0&I\end{bmatrix}
\begin{bmatrix}I&0\\B^*&C\end{bmatrix}\]
and using $\L=-A+BC^{-1}B^*$,
\begin{align*}
\Delta^{-1}&=\begin{bmatrix}I&0\\-C^{-1}B^*&C^{-1}\end{bmatrix}
\begin{bmatrix}-\L^{-1}&\L^{-1}B C^{-1}\\0&I\end{bmatrix}=
\begin{bmatrix}-\L^{-1}&*\\{}*&{}*\end{bmatrix}.\qedhere
\end{align*}
\end{proof}

If $\Gv$ is the Green's function with boundary at node $n$, then
$\Gv=\tilde\Delta^{-1}$, where the Dirichlet Laplacian $\tilde\Delta$
is obtained from $\Delta$ simply by removing row and column~$n$.
Since $\L$ has the same response as $\Delta$ on the set $\No$,
the response matrix of $\tilde\Delta$ is just $\L$ with row and column $n$ removed.
Thus $\left[\L_{i,j}\right]^{j=1,\dots,n-1}_{i=1,\dots,n-1} = -\big(\tilde\Delta^{-1}|_{\No\setminus\{n\}}\big)^{-1}$,
or equivalently,
\begin{equation} \label{LG}
  \left[\L_{i,j}\right]^{j=1,\dots,n-1}_{i=1,\dots,n-1} =-
  \left(\left[\Gv_{i,j}\right]^{j=1,\dots,n-1}_{i=1,\dots,n-1}\right)^{-1}.
\end{equation}
By perturbing $\Delta$, we see that \eqref{LG} holds even if the
Laplacian $\Delta$ is singular, so long as the Dirichlet Laplacian is
nonsingular.

\section{Graphs on surfaces}

Let $\Sigma$ be an oriented surface, possibly with boundary, and $\G$
a graph embedded on $\Sigma$ in such a way that complementary
components (the connected components of the surface after it is cut
along the edges of $\G$) are contractible or peripheral annuli (that
is, an annular neighborhood of a boundary component).  We call the
pair $(\G,\Sigma)$ a \textbf{surface graph} (see Figure~\ref{fig:surface}).

\begin{figure}[htbp]
  \vspace{-6pt}
  \begin{center}
    \includegraphics[height=1.4in]{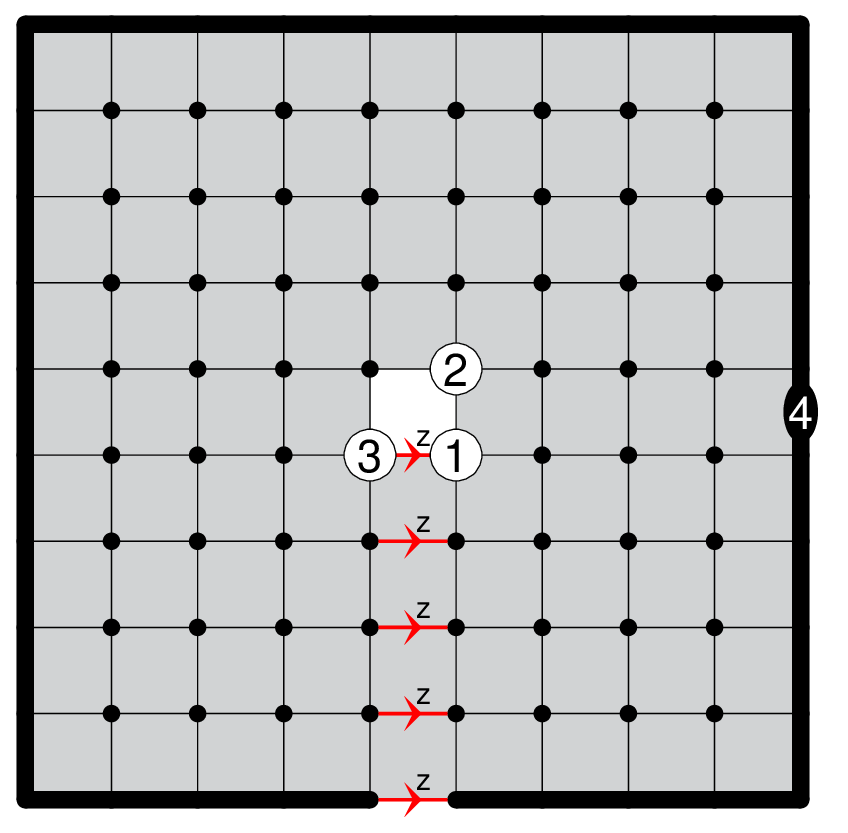} \hskip.5in
    \includegraphics[height=1.4in]{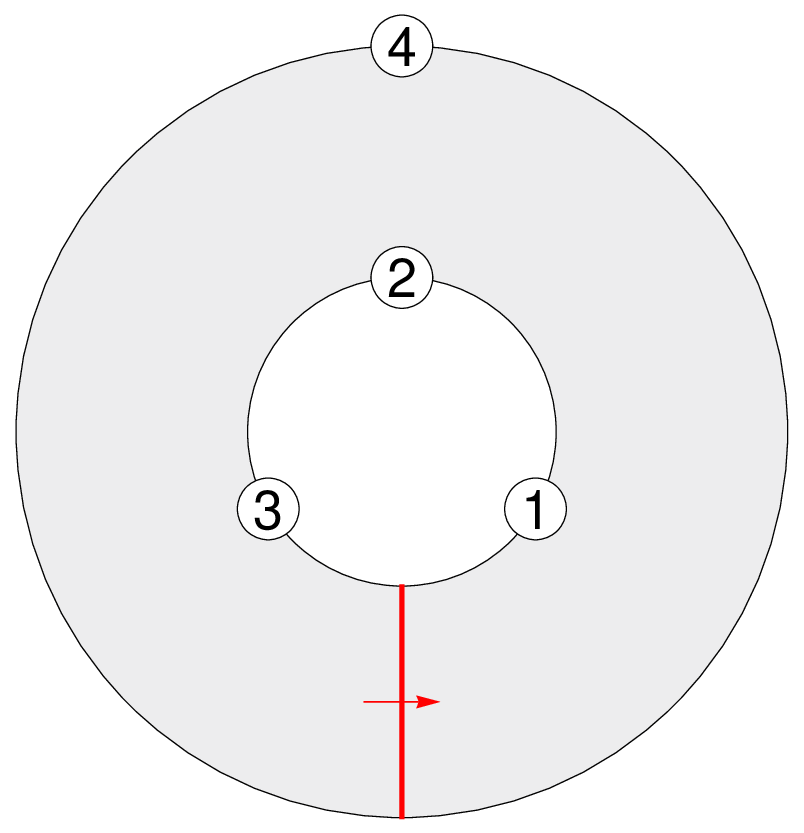}
  \end{center}
  \vspace{-8pt}
  \caption{\label{figure1}On the left is a graph $\G$ with wired boundary conditions: the
    outer boundary is one vertex (and the bottom edge is a self-loop) embedded in
    an annulus whose inner boundary is
    one of the squares of the grid.  There is a ``zipper''
    (edges crossing a dual path) connecting the
    inner boundary to the outer boundary of the annulus, and edges crossing
    the zipper have parallel transport $z$ from their left endpoint to
    their right endpoint.  We have labeled four of the vertices on the
    boundary of $\Sigma$, which we call nodes.  On the right is a schematic
    diagram of the surface graph.
    }
  \label{fig:surface}
\end{figure}

\subsection{Nodes and interior vertices}

For each boundary component $C$ of the surface $\Sigma$ there is a
``peripheral'' cycle on $\G$, bounding the annular complementary component whose other
boundary is $C$.
We select from this cycle a (possibly empty) set of vertices.  The
union of these special vertices over all boundary components will be
the nodes $\No$; the non-node vertices are \textbf{interior vertices}
(even though these may be on the boundary of $\Sigma$).

Planar maps, in which $\Sigma$ is a topological disk, are examples of
surface graphs: these are called \textbf{circular planar graphs} in
\cite{CIM}. In this case the nodes are a subset of the vertices on the
outer face.

\subsection{Flat bundles}

Given a surface graph $(\G,\Sigma)$, a vector bundle on $\G$ with
connection $\Phi$ is \textbf{flat} if it has trivial monodromy around
any loop which is contractible on $\Sigma$.  In this case, the
monodromy around a loop only depends on the homotopy class of the
(pointed) loop in $\pi_1(\Sigma)$, and so the monodromy determines a
representation of $\pi_1(\Sigma)$ into $\Aut(V_p)$, where $p$ is the
base point.  This representation depends on the base point~$p$ for
$\pi_1$; choosing a different base point will conjugate the
representation.

Conversely, let $\rho\in\text{Hom}(\pi_1(\Sigma),\Aut(V))$ be a
representation of $\pi_1(\Sigma)$ into $\Aut(V)$; there is a
unique (up to gauge equivalence) flat bundle with monodromy $\rho$.
It is easy to construct: for example start with a trivial bundle on a
spanning tree of~$\G$; for each additional edge the parallel transport
along it is determined by the topological type of the resulting cycle
created.

In the case of a line bundle, $\Aut(\C)=\C^*$ is abelian and the
monodromy of a loop is well defined without regard to base point.
Moreover in this case $\rho(\gamma)$ only depends on the homology
class of the loop $\gamma$, since any map from $\pi_1(\Sigma)$ to an abelian
group factors through $H_1(\Sigma)$.

\section{The response matrix and probabilities}

\subsection{Circular planar graphs}

In the case of a planar graph, there is no monodromy and $\L=L$ is a
matrix of real numbers.  This case was analyzed by \cite{CdV}, see
also \cite{CGV, CIM}.  Colin de Verdi\`ere showed that response
matrices $L$ of planar graphs are characterized by having nonnegative
``non-interlaced'' minors.  Given two disjoint subsets of nodes $R$ and
$S$, we say that $R$ and $S$ are non-interlaced if $R$ and $S$ are
contained in disjoint intervals in the circular order on the nodes.
When $|R|=|S|$, the corresponding minor is $\det(L_R^S)\ge0$ (the
determinant of the submatrix whose rows are indexed by $R$ and columns
by $S$).

In \cite[Proposition~2.8]{KW1} (see also \cite[Lemma~4.1]{CIM}), there
is an interpretation of the entries of $L$ in terms of groves.  A
\textbf{grove} is a spanning forest with the property that every
component contains at least one node.  The weight of a grove is the
product of the conductances of its edges.

\begin{theorem}[\cite{CIM,KW1,Fomin}]\label{Lentriesplanar}
  For disjoint non-interlaced
  subsets $R,S\subset\No$ with $|R|=|S|$,
  $\det(L_R^S)$ is a ratio of two terms:
  the denominator is the weighted sum of groves in which every node is in
  its own component, and the numerator is the weighted sum of groves in which
  the nodes in $R$ are connected pairwise with nodes in $S$,
  and other nodes are in their own component.
\end{theorem}

In particular this proves that the non-interlaced minors
of $L$ are nonnegative.

Groves can be grouped into subsets according to the way they partition
the nodes (that is, the way the nodes are connected in a grove). For
example, a grove of type $1,2\mid 3,4,5\mid 6$ is one in which nodes $1$ and $2$
are in a tree, nodes $3,4,5$ are in a second tree, and node $6$ is in
its own tree.  For a partition $\sigma$ of the nodes, we let
$Z[\sigma]$ denote the weighted sum of groves of type $\sigma$.  For
circular planar graphs with $n$ nodes on the boundary, we previously
showed \cite{KW1} how to compute the ratio $Z[\sigma]/Z[1|2|\cdots|n]$
for any planar partition $\sigma$ of $\{1,2,\ldots,n\}$.  This ratio is an
integer-coefficient polynomial in the $L_{i,j}$ \cite{KW1}.

It is useful to allow the partition $\sigma$ to have missing indices,
such as $1,2|4,5|6$.
The nodes with the missing labels are
treated as internal vertices which can occur in any part, so that, e.g.,
$Z[1,2|4,5|6]=Z[1,2,3|4,5|6]+Z[1,2|3,4,5|6]+Z[1,2|4,5|3,6]$.

\subsection{\texorpdfstring{$\L$}{L} matrix entries}

Like
in \tref{Lentriesplanar}, in the case of a flat bundle on a
surface graph there is a combinatorial interpretation of the entries
of $\L$.  A collection of edges of a surface graph $(\G, \Sigma)$ is a
\textbf{cycle-rooted grove (CRG)} if each component is either a
CRT (a component containing one cycle) not containing a node, or
a tree containing at least one node.  Moreover for each CRT
component, the cycle must be topologically nontrivial.  A CRG is
distinguished from a CRSF by the fact that in a CRG the tree
components may contain several nodes, while in a CRSF the tree
components contain a unique node.

A CRG has a weight which is the product of its edge conductances times
the product over its cycles of $2-w-1/w$ (for a line bundle) or
$2-\Tr(w)$ (for an $\SL$-bundle), where $w$ is the monodromy around
the cycle.  For a partition $\sigma$ of the nodes, we define
\begin{gather*}
\Zv[\sigma] := \text{weighted sum of cycle-rooted groves of type $\sigma$}\\
\Zv := \text{weighted sum of cycle-rooted groves in which all nodes are connected}
\end{gather*}
For example, the weighted sum of CRSFs is $\Zv[1|2|\cdots|n]$.
Suppose the partition $\sigma$ is a partial pairing, i.e., $\sigma$
consists of doubleton and singleton parts, say
$\sigma=r_1,s_1\mid\cdots\mid r_k,s_k\mid t_1\mid\cdots\mid t_\ell$.
We can define
\[
 \Zv[{}_{r_1}^{s_1}|\cdots|{}_{r_k}^{s_k}|t_1|\cdots|t_\ell] :=
\sum_{\text{CRGs of type $\sigma$}}
\text{(weight of CRG)} \times
\prod_{i=1}^k \text{parallel transport to $r_i$ from $s_i$}
\]
for line bundles (so that the structure group is commutative and the above product makes sense),
and for vector bundles when $\sigma$ has only one doubleton part.

\begin{theorem}\label{Lentries}
  If $i\ne j$, then
  \[ \L_{i,j} = \frac{\Zv\big[{}_i^j|\text{\rm(nodes other than $i$ and $j$ in singleton parts)}\big]}{\Zv[1|2|\cdots|n]}.\]
\end{theorem}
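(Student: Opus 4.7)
The plan is to apply the Schur-complement formula $\L=-A+BC^{-1}B^*$ and interpret the two resulting determinants combinatorially via the CRSF identities already established. For the denominator, \tref{crsfthm1} applied to the Dirichlet Laplacian $C$ with boundary at $\No$ (as in \sref{dirichletBC}) gives $\det C=\Zv[1|2|\cdots|n]$: in that setting each tree-component of a CRSF contains a unique node and the CRT-components contribute the topologically nontrivial monodromy weights, which is exactly the definition of $\Zv[1|2|\cdots|n]$. (For $\SL$-connections one replaces $\det$ by $\Qdet$ and invokes \tref{crsfthm2}.)

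For the numerator, introduce the bordered matrix
$$M_{ij}=\begin{pmatrix} A_{ij} & B_{i,\cdot}\\ B^*_{\cdot,j} & C\end{pmatrix},$$
which is the submatrix of $\Delta$ with rows indexed by $\{i\}\cup(\G\setminus\No)$ and columns by $\{j\}\cup(\G\setminus\No)$ (equivalently, $\Delta$ with every node-row except that of $i$ and every node-column except that of $j$ deleted). The block-determinant identity yields
$$\det M_{ij}=\det C\cdot\bigl(A_{ij}-(BC^{-1}B^*)_{ij}\bigr)=-\L_{ij}\,\det C,$$
so it suffices to establish the ``all-minors'' CRSF identity $\det M_{ij}=-\Zv\bigl[{}_i^j|\text{(singletons at the other nodes)}\bigr]$.

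I would prove this by rerunning the signed-permutation expansion used to establish \tref{crsfthm1} in \cite{Forman, Kenyon.bundle}, now applied to the minor $M_{ij}$ instead of the full Laplacian. The surviving configurations organize into: (a) a single tree-component containing both $i$ and $j$, whose unique $j$-to-$i$ path contributes exactly the ``parallel transport to $i$ from $j$'' factor built into $\Zv[{}_i^j|\cdots]$; (b) singleton-rooted trees at the remaining nodes; and (c) topologically nontrivial CRT-components contributing the usual $(2-w-1/w)$ (respectively $(2-\Tr w)$) weights. An equivalent route via bundle-Wilson's algorithm is also available: \lref{Gv} expresses $(BC^{-1}B^*)_{ij}-A_{ij}$ as a signed sum over walks from $j$ to $i$ weighted by their parallel transports, and Wilson-style cycle-popping resums such walks into CRGs of the required type.

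The main obstacle will be tracking signs and non-commutative parallel transports throughout the all-minors expansion, particularly in the $\SL$ case where one must work with q-determinants. The saving grace is that because the target partition has only the single doubleton $\{i,j\}$, the orientation of the $j$-to-$i$ path is fixed and the symbol $\Zv[{}_i^j|\cdots]$ is unambiguous; the permutation-expansion argument then transports verbatim into the Pfaffian/q-determinant framework of \cite{Kenyon.bundle}, and combining $\det M_{ij}=-\L_{ij}\det C$ with the combinatorial identifications of both determinants yields the claim.
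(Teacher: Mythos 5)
Your line-bundle argument is essentially sound, and it takes a genuinely different route from the paper's proof of \tref{Lentries}: rather than analyzing the graph $\G$ itself, the paper adjoins an auxiliary edge $e_{i,j}$ carrying an indeterminate parallel transport ($z$, resp.\ $M$), applies \tref{crsfthm1}/\ref{crsfthm2} to the enlarged self-dual Laplacian $\tilde\Delta$, and extracts the coefficient of $z$ (resp.\ compares the dependence on $M$), using the Schur factorization only to see that this coefficient is $-\L_{i,j}\det C$. Your route --- identify $\L_{i,j}\det C$ with the non-principal minor $\det\Delta_{\{i\}\cup I}^{\{j\}\cup I}$ and prove an ``all-minors'' CRSF expansion for it --- is exactly how the paper proves the more general \tref{LABC} (via $\Delta=d^*d$ and Cauchy--Binet), of which \tref{Lentries} is the case $R=\{i\}$, $S=\{j\}$, $T=\emptyset$; so in the abelian case your plan is correct and arguably yields more (all non-interlaced minors at once), at the cost of redoing the Forman/Kenyon expansion with careful signs. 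Your sign bookkeeping checks out: with your block ordering one indeed gets $\det M_{ij}=-\L_{i,j}\det C$ and the combinatorial side must come out as $-\Zv[{}_i^j|\cdots]$, consistent with small examples.

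The genuine gap is the $\SL$ case, which the theorem covers and which you dismiss with ``transports verbatim into the Pfaffian/q-determinant framework.'' It does not: $\Qdet$ is only a polynomial in the matrix entries (and only enjoys the multiplicativity, Schur-complement and Cauchy--Binet-type identities you need) for quaternion-Hermitian (self-dual) matrices, and your bordered matrix $M_{ij}$ --- a non-principal minor of $\Delta$ --- is not self-dual. For a general quaternionic matrix the paper's definition $\Qdet M=\sqrt{\det\widetilde M}$ is only determined up to sign and does not expand as a signed sum over permutations, so ``rerunning the signed-permutation expansion'' on $M_{ij}$ has no meaning there; this is precisely why \tref{LABC} is stated for line bundles only, and why the paper's own proof of \tref{Lentries} in the $\SL$ case never takes a determinant of a non-self-dual matrix, instead working with $\Qdet\tilde\Delta$ and $\Qdet(a-bC^{-1}b^*)$ (Schur complements of the self-dual $\tilde\Delta$) and comparing coefficients of the free transport $M$ as in \eqref{KM}. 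To repair your proposal in the $\SL$ case you would either have to develop a minor/Cauchy--Binet calculus for q-determinants of non-self-dual matrices (nontrivial, and not in the cited references) or fall back on the paper's added-edge device; the alternative sketch via \lref{Gv} and Wilson-style cycle popping also needs genuine work to produce the $(2-\Tr w)$ cycle weights and is not a substitute as written.
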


\begin{proof}
  Let us first do the line bundle case.  Let
  $\Delta=\begin{bmatrix}A&B\\B^*&C\end{bmatrix}$ be the Laplacian of
  $\G$, with $A$ indexed by the nodes $\No$.

  We make a new graph $\tilde\G$ by adding an edge $e_{i,j}$ (with unit conductance) to $\G$
  which connects $i$ and $j$ and has parallel transport $z$ when
  directed from $i$ to $j$.  Let $\tilde\Delta$ be the line bundle
  Laplacian on the new graph $\tilde\G$, with Dirichlet boundary
  conditions at the nodes except nodes $i$ and $j$, that is,
  $\tilde\Delta=\begin{bmatrix}a&b\\b^*&C\end{bmatrix}$ where
  $a=\begin{bmatrix}A_{i,i}+1&A_{i,j}-z^{-1}\\A_{j,i}-z&A_{j,j}+1\end{bmatrix}$
  and $b$ is the $i$th and $j$th column of $B$.

  By \tref{crsfthm1} (and its extension discussed in
  section~\ref{dirichletBC}), $-[z](\det\tilde\Delta)$ is a sum of
  CRSFs with each node except $i,j$ in its own tree component, and
  nodes $i,j$ in a cycle containing edge $e_{i,j}$, and the weight
  includes the parallel transport of the path in $\G$ from $j$ to $i$.
  (Here $[z^\alpha]f(z)$ refers to the coefficient of $z^\alpha$ in
  $f(z)$.)  We can write
\begin{align}
  \tilde\Delta&=\begin{bmatrix}a-b\,C^{-1}b^*&b\,C^{-1}\\0&I\end{bmatrix}
  \begin{bmatrix}I&0\\b^*&C\end{bmatrix}, \label{uppertri} \\
  \det\tilde\Delta&=\det[a-b\,C^{-1}b^*] \det C \notag \\
  -[z]\det\tilde\Delta&=-[z^0][a-b\,C^{-1}b^*]_{1,2} \det C \notag
\end{align}
   However \[[z^0][a-b\,C^{-1}b^*]_{1,2}=[A-BC^{-1}B^*]_{i,j}=-\L_{i,j}.\]
   Finally, $\det C$ is the sum of CRSFs.

   The proof in the $\SL$-bundle case is similar.  Let
   $\Delta=\begin{bmatrix}A&B\\B^*&C\end{bmatrix}$ be the $\SL$-bundle
   Laplacian of $\G$.  Add an edge $e_{i,j}$ to $\G$ from node $i$ to node $j$
   with parallel transport $M\in\SL.$ As above let
   $\tilde\Delta=\begin{bmatrix}a&b\\b^*&C\end{bmatrix}$ where
   $a=\begin{bmatrix}A_{i,i}+I&A_{i,j}-M^{-1}\\A_{j,i}-M&A_{j,j}+I\end{bmatrix}$
   and $b$ is the $i$th and $j$th column of $B$.  Now $\det\tilde\Delta$
   gives a weighted sum of CRSFs with each node except $i,j$ in its
   own tree component, where the weight is the product of the
   monodromies along the cycles.  If the CRSF contains a cycle that
   uses edge $e_{i,j}$, then the monodromy of this cycle will depend
   on $M$, and otherwise, the weight of the CRSF does not depend on
   $M$.  We write
   \begin{equation}\label{KM}
     \Qdet\Delta'= C_0 + \sum_{\omega} C_{\omega}(2-\Tr(K_\gamma M)),
   \end{equation}
   where the sum is over configurations $\omega$ with a cycle $\gamma$
   containing edge $e_{i,j}$, $K_\gamma$ is the parallel transport
   to $i$ from $j$ in the cycle $\gamma$, and $C_0,C_\gamma$ and
   $K_\gamma$ do not depend on $M$.

   We have \eqref{uppertri} in this case as well, where $C$ does not
   depend on $M$.  Letting $D=b\,C^{-1}b^*$, which does not depend on
   $M$, we can write
   \[a-b\,C^{-1}b^*=\begin{bmatrix}A_{i,i}+I-D_{i,i}&A_{i,j}-D_{i,j}-M^{-1}\\A_{j,i}-D_{j,i}-M&A_{j,j}+I-D_{j,j}\end{bmatrix}.\]
   This is a $2\times2$ matrix with entries in $\GL$.
   The reader may check that for a $2\times 2$ matrix with entries in $\GL$,
   \[ \Qdet\begin{bmatrix} x I & Y \\ Y^* & z I\end{bmatrix} = x z -\det Y = xz - \frac12 \Tr Y Y^*.\]
   Consequently
   \[\Qdet(a-b\,C^{-1}b^*)=\frac12 \Tr[(A_{i,j}-D_{i,j}-M^*)(A_{i,j}^*-D_{i,j}^*-M)]+C_1=\Tr[(A_{i,j}-D_{i,j})M]+C_2\] where
   $C_1$ and $C_2$ do not depend on $M$.  Comparing with \eqref{KM} we see
   that, since $M$ was arbitrary,
   \begin{align*}
     -\sum_\omega C_\omega K_\omega= A_{i,j}-D_{ij}&=[A-BC^{-1}B^*]_{i,j}\det C=-\L_{i,j}. \hfill\qedhere
   \end{align*}
\end{proof}

Principal minors of $\L$ also have probabilistic interpretations:
\begin{theorem}\label{Lprincminors}
  Suppose $T\subset\No$ and $Q=\{q_1,\dots,q_\ell\}=\No\setminus T$.  Then
  \[\det\L_T^T = (-1)^{|T|}\frac{\Zv[q_1|q_2|\cdots|q_\ell]}{\Zv[1|2|\cdots|n]}.\]
\end{theorem}

\begin{proof} Order the vertices of $G$ by first $\No\setminus T$, then $T$,
then the internal nodes. In this order we have
\[\Delta=\begin{bmatrix}A_1&A_2&B_1\\A_2^*&A_3&B_2\\B_1^*&B_2^*&C
\end{bmatrix}.\]
Then \[\L=-\begin{bmatrix}A_1&A_2\\A_2^*&A_3\end{bmatrix}+
\begin{bmatrix}B_1\\B_2\end{bmatrix}C^{-1}\begin{bmatrix}B_1^*&B_2^*\end{bmatrix}\]
and $\det\L_T^T=\det(-A_3+B_2C^{-1}B_2^*).$ The proof follows from the identity
\[\begin{bmatrix}A_3&B_2\\B_2^*&C
\end{bmatrix}=\begin{bmatrix}A_3-B_2C^{-1}B_2^*&B_2C^{-1}\\0&I
\end{bmatrix}\begin{bmatrix}I&0\\B_2^*&C\end{bmatrix}
\] upon taking determinants: the left-hand side determinant is the
weighted sum of CRGs of $\G_T$, the right-hand side determinant
is $(-1)^{|T|}$ times
the product of
$\det\L_T^T$ and $\det C$ which counts CRSFs.
In the $\SL$-case we used the fact that
$\Qdet\begin{bmatrix}X&Y\\0&I\end{bmatrix}=\Qdet\begin{bmatrix}X&0\\Y&I\end{bmatrix}=\Qdet X$.
\end{proof}

For line bundles there is an interpretation of more general minors:
\begin{theorem}\label{LABC}
  Suppose $Q=\{q_1,\dots,q_\ell\}$,
$R=\{r_1,\dots,r_k\}$, $S=\{s_1,\dots,s_k\}$, and~$T$
are disjoint sequences of nodes for which $|R|=|S|$ and
  $\No=Q\cup R\cup S\cup T$.
  Then
\[
 (-1)^{|T|} \det\L_{R,T}^{S,T} = \sum_{\text{\rm permutations $\rho$}}(-1)^\rho\frac{\Zv\big[{}_{r_1}^{s_{\rho(1)}}|\cdots|_{r_k}^{s_{\rho(k)}}|q_1|\cdots|q_\ell\big]}{\Zv[1|2|\cdots|n]}
.\]
\end{theorem}

\begin{proof}
We use a block LU-factorization of $\Delta$, as in the previous proof, to find
$(-1)^{|R|+|T|} \det \L_{R,T}^{S,T} \det \Delta_I^I=\det\Delta_{R,T,I}^{S,T,I}$, where $I$ is the set of internal nodes.
$\det\Delta_I^I=\Zv[1|\cdots|n]$.   So we need to evaluate $\det\Delta_{R,T,I}^{S,T,I}$.
The proof now follows the proof of \tref{crsfthm1} which is found in
\cite[proof of Theorem~1]{Kenyon.bundle}.
Write $\Delta=d^*d$ where $d$ is the operator from
sections over the vertices to sections over the edges.
Then $\Delta_{R,T,I}^{S,T,I}=d_{S,T,I}^*d_{R,T,I}$ where $d_X$
is the restriction of $d$ to sections over $X$.
By the Cauchy-Binet theorem,
\[\det d_{S,T,I}^*d_{R,T,I}=\sum_Y\det (d^Y_{S,T,I})^*\det d_{R,T,I}^Y,\]
where the sum is over collections of edges $Y$ of cardinality $|S\cup T\cup
I|$.  The nonzero terms in the sum are collections of edges in which
each component is a CRT if we glue $r_i$ to $s_i$ for
each~$i$. Equivalently, each component is either a CRT or a tree
containing a unique $r\in R$ and $s\in S$.  The weight of a component
with a cycle is $2-w-1/w$ where $w$ is the monodromy of the cycle; the
weight of a path is the parallel transport to the $r$ from the $s$.
It remains to compute the signature of each configuration.

This signature is the same as
the signature in the case of a trivial bundle, which is determined by \cite{CIM}
to be the signature of the permutation from $R$ to $S$ determined by the pairing.
\end{proof}

\subsection{Cycle-rooted grove probabilities}

\begin{theorem}
  \label{thm:all-types}
  The probability of any topological type of CRG involving only
  two-node connections and loops is a function of $\L$ and the
  weighted sum of CRSFs.
\end{theorem}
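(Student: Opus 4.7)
The plan is to generalize the auxiliary-edge technique from the proof of \tref{Lentries} to handle arbitrary pair and loop structures simultaneously. For a target CRG type $\sigma$ with doubleton pairs $\{i_k,j_k\}$ and specified loop components, I would augment the graph $\G$ by ghost edges implementing these structures: one ghost edge per doubleton $\{i_k,j_k\}$ carrying a formal parallel-transport variable $z_k$, and ghost edges forming each target loop (of the correct homotopy class) with additional formal transports. By \tref{crsfthm1} (or \tref{crsfthm2}), the determinant $\det\tilde\Delta$ of the augmented Laplacian equals a weighted sum over CRSFs on the extended graph, and extracting the coefficient of the monomial encoding precisely the target ghost-edge usage pattern isolates the contribution $\Zv[\sigma]$ up to a sign and a known monodromy factor.

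On the other hand, the same determinant factors via the Schur complement identity $\tilde\Delta = \left[\begin{smallmatrix} \tilde a - \tilde b C^{-1} \tilde b^* & \tilde b C^{-1} \\ 0 & I \end{smallmatrix}\right] \left[\begin{smallmatrix} I & 0 \\ \tilde b^* & C \end{smallmatrix}\right]$, giving $\det\tilde\Delta = \det(\tilde a - \tilde b C^{-1} \tilde b^*) \cdot \det C$. The factor $\det C$ equals the weighted sum of CRSFs $\Zv[1|2|\cdots|n]$, while the small block $\tilde a - \tilde b C^{-1} \tilde b^*$ has entries that are polynomials in the entries of $\L$ and in the ghost-transport variables, exactly as in the single-pair case of \tref{Lentries}. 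Extracting the same monomial coefficient from this block's determinant then expresses $\Zv[\sigma]/\Zv[1|2|\cdots|n]$ as a polynomial in the entries of $\L$.

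The key feature that makes this strategy work — and that explains the hypothesis restricting to two-node connections and loops — is that the ghost-edge pattern unambiguously encodes the target CRG type: each ghost doubleton edge must be completed into a cycle by a path in $\G$ joining its endpoints, and each ghost loop becomes a CRT cycle whose monodromy is tracked by the factor $2-w-1/w$ (line bundle) or $2-\Tr w$ ($\SL$-bundle). For tripleton or larger tree parts, the same ghost-edge structure would correspond to multiple distinct topological types on $\G$ and would not cleanly isolate any single one; the hypothesis is exactly what avoids this ambiguity. The main obstacle is setting up the ghost loop edges so that the completed cycles carry the intended homotopy classes, and carefully bookkeeping monodromy contributions through both the determinant expansion and the Schur complement; once these are handled, the polynomial expression in $\L$ follows from routine algebraic manipulation.
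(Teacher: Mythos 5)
Your skeleton overlaps with the paper's in two respects: you add an auxiliary edge for each prescribed two-node connection, and you factor the augmented determinant through a Schur complement so that one factor is $\det C$, the weighted sum of CRSFs, and the other involves only node data (the paper writes this as $\det\Delta^{\G'}=\det(-\L+S_\Phi)\det C$). But the extraction step, which is where the real content lies, has a genuine gap. Extracting ``the coefficient of the monomial encoding the target ghost-edge usage pattern'' does not isolate the target type: a single cycle of the augmented CRSF can pass through two ghost edges $e_{i_1,j_1}$ and $e_{i_2,j_2}$, and its weight $2-w-1/w$ then contributes to the very same monomial $z_1z_2$ as two disjoint cycles each using one ghost edge; deleting the ghost edges from such a configuration produces the pairing $\{j_1,i_2\},\{j_2,i_1\}$ (or a different topological type of the same pairing), not $\{i_1,j_1\},\{i_2,j_2\}$. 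So distinct connection types contaminate one another in your expansion, and for $\SL$-transports ``the coefficient of a monomial'' is not even well defined since the transports do not commute. The paper's proof handles exactly this point by gluing one strip per added edge to obtain a larger surface $\Sigma'$, so that cycles through different $e_{i,j}$'s lie in different homotopy classes, and then invoking the key identifiability input: Kenyon's theorem (via Fock--Goncharov) that the functions $\prod_{\text{cycles in }X}(2-\Tr w)$, as $X$ runs over finite laminations, are linearly independent on the $\SL$-representation variety, so the coefficients $C_X$ in $\det\Delta^{\G'}=\sum_X C_X\prod(2-\Tr w)$ are uniquely determined. That even the simplest annular case requires setting up and inverting the linear system $\A_n$, rather than reading off monomials, shows this difficulty is not cosmetic.

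The second gap is the treatment of loops. The loops in the statement are the homotopically nontrivial cycles of the CRT components of the CRG, made of actual edges of $\G$; they are not connections one can ``implement'' by ghost edges, and adding ghost edges that form a loop would count a different event (configurations whose cycles pass through those ghost edges). Recovering the isotopy classes of these free loops from the Laplacian determinant, viewed as a function of the flat connection, is again precisely the lamination-basis theorem above, and it genuinely requires the nonabelian $\SL$ structure: a $\C^*$-connection sees only the homology class of each cycle, so conjugation-sensitive distinctions (compare the pair-of-pants analysis in \sref{sec:pants-2-0-0}, where transports such as $A$, $ABA^{-1}$, and $(AB)^{-k}A(AB)^k$ must be kept apart) collapse under commuting formal variables. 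Since your proposal neither cites nor proves any such identifiability statement, the claim that the loop structure can be read off from factors $2-w-1/w$ or $2-\Tr w$ is unsupported, and this is the central ingredient of the paper's argument that is missing.
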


\begin{proof}
  By a result of Kenyon \cite{Kenyon.bundle} (based on a theorem of
  Fock and Goncharov \cite{FG}), on a graph embedded on a surface
  with no nodes one
  can compute the probability of any topological type of CRSF (that
  is, the probability that a CRSF has a given set of homotopically
  nontrivial cycles up to isotopy) from the determinant of the
  Laplacian considered as a function on the space of flat
  $\SL$-connections. Indeed, as $X$ runs over all possible ``finite laminations'',
  that is, isotopy classes of collections
  of finite, pairwise disjoint, topologically nontrivial simple loops on the surface,
  the products $\prod_{\text{cycles in }X}(2-\Tr w)$ form a basis
  for a vector space (the vector space of regular
  functions on the representation variety) and the Laplacian
  determinant is an element of this vector space.
  In other words, \tref{crsfthm2} above shows that
  $\det\Delta=\sum_X C_X\prod_{\text{cycles in }X}(2-\Tr w)$, where $X$
  runs over finite laminations; such an expression determines
  each coefficient $C_X$ uniquely.

  To compute the probability that a random CRG $Y$ on $\Sigma$ has a
  fixed topology of node connections, add edges $e_{i,j}$ to $\G$
  connecting endpoints of all two-node connections $i\to j$ of~$Y$;
  the resulting graph $\G'$ can be embedded on a surface $\Sigma'$
  containing~$\Sigma$, and the union of~$Y$ and the new edges is a
  CRSF on $\G'$. (We obtain $\Sigma'$ from $\Sigma$ by gluing a single strip
  running from $i$ to $j$ for each $e_{i,j}$; in this way cycles containing different
  $e_{i,j}$s are in different homotopy classes.)

  Any CRG on $\G$ with the same node connection type as $Y$ can be
  completed to a CRSF on $\Sigma'$ by adding the edges $e_{i,j}$.
  Conversely (since each added edge $e_{i,j}$ is in a different homotopy
  class on $\Sigma'$) each CRSF of this topological type comes from a
  CRG on $\G$ with the same connection type as $Y$.

  The flat connection on $\G$ can be extended to a flat connection on
  $\G'$ by taking generic parallel transports $\phi_{i,j}$ along the $e_{i,j}$.

  It remains to show that the Laplacian determinant of $\G'$ is a
  function of $\L$, the new parallel transports $\Phi=\{\phi_{i,j}\}$,
  and $\det C$. However
  $\Delta^{\G'}=\Delta+ S_\Phi$ where $S_\Phi$ is supported on the nodes;
  using $\Delta=\begin{bmatrix}A&B\\B^*&C\end{bmatrix}$
  we have
  \begin{align*}
  \det\Delta^{\G'}=\det\begin{bmatrix}A+S_\Phi&B\\B^*&C\end{bmatrix}
   &=\det\begin{bmatrix} A+S_\Phi-BC^{-1}B^*&BC^{-1}\\0&I\end{bmatrix}
         \begin{bmatrix} I&0\\B^*&C\end{bmatrix}\\
   &=\det(-\L+S_\Phi)\det C. \qedhere
  \end{align*}
\end{proof}

\section{Basic surface graphs} \label{basic}

The simplest non-circular-planar case is the annulus. Since
$\pi_1(\Sigma)$ is abelian in this case it usually suffices to
consider a line bundle rather than a two-dimensional bundle.  The $\L$
matrix then depends on a single variable $z\in\C^*$ which is the
monodromy of a flat connection. For simplicity we choose a connection
which is the identity on all edges except for the edges crossing a
``zipper'', that is, a dual path connecting the boundaries; these
edges have parallel transport $z$.

Suppose $(\G,\Sigma)$ is a surface graph on an annulus with $n_1$
nodes on one boundary component and $n_2$ on the other. Then $\L$ is
an $(n_1+n_2)$-dimensional matrix with entries which are rational in
$z$. Let $\Zv_0=\Zv[1|2|\cdots|n]$ be the weighted sum of CRSFs of $\G$
(CRGs in which each node is in a separate component).
We have
\[ \Zv(1|2|\cdots|n) = \sum_k \alpha_k(2-z-z^{-1})^k,\]
where $\alpha_k$ is the weighted sum of CRSFs with $k$ cycles winding around the annulus.

While we have not attempted to show that every connection probability
can be computed via the $\L$-entries, we present here some cases of
small $n_1,n_2$.

\subsection{Annulus with \texorpdfstring{$(2,0)$}{(2,0)} boundary nodes}

Suppose there are two nodes on one boundary and none on the other.
Then $\L_{1,2}(z)$ counts connections to $1$ from $2$.  There are only
two topologically different configurations, which are illustrated in
\fref{20case}.  We have the following theorem.

\begin{theorem}
  $\frac{\partial}{\partial z}\log \L_{1,2}(z)|_{z=1}$ is the
  probability that the LERW to $1$ from $2$ crosses the zipper.
\end{theorem}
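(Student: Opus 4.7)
The plan is to apply \tref{Lentries} to express $\L_{1,2}(z)$ as a ratio of grove-generating functions, expand both around $z=1$, and exploit the fact that cycle weights vanish to second order there.

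First, \tref{Lentries} gives
$$\L_{1,2}(z) = \frac{\Zv[{}_1^2](z)}{\Zv[1|2](z)},$$
where the numerator (resp.\ denominator) is the weighted sum of CRGs in which nodes $1$ and $2$ lie in the same (resp.\ distinct) tree components, with the numerator carrying an extra parallel-transport factor from $2$ to $1$ along the connecting path. I stratify both sums by topological type: each CRT cycle winding $m\ne 0$ times around the annulus contributes the factor $f_m(z):=2-z^m-z^{-m}$, and in the numerator the tree containing both nodes contributes $z^n$, where $n$ is the signed number of zipper crossings made by the unique $2$-to-$1$ path in the tree.

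The analytic input is $f_m(1)=f_m'(1)=0$, so $f_m(z)=O((z-1)^2)$ near $z=1$. Hence every CRG containing at least one CRT component contributes $O((z-1)^2)$ to both numerator and denominator, so through first order
$$\L_{1,2}(z) = \frac{\sum_n T_n z^n + O((z-1)^2)}{F_0 + O((z-1)^2)},$$
where $T_n$ is the weighted number of spanning trees of $\G$ whose $2$-to-$1$ path has signed zipper crossing number $n$, and $F_0$ is the weighted number of spanning $2$-forests separating $\{1\}$ from $\{2\}$. Taking the logarithmic derivative at $z=1$ yields
$$\frac{\partial}{\partial z}\log \L_{1,2}(z)\bigg|_{z=1} = \frac{\sum_n n\,T_n}{\sum_n T_n},$$
which is the expected signed zipper-crossing number of the $2$-to-$1$ path in a uniformly random spanning tree of $\G$. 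By the classical identification (Pemantle; Wilson), this path is distributed as the LERW from $2$ to $1$, so the right-hand side is the expected signed zipper-crossing count of the LERW.

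The final step, and the main subtlety, is to identify this expectation with the probability that the LERW crosses the zipper. Since the LERW is a simple path with both endpoints on the outer boundary of the annulus, its signed zipper-crossing equals its winding number around the inner hole; as in \fref{20case}, on the annulus this winding takes only the values $0$ and $1$ once the zipper is oriented so that the positive direction is the direction in which a simple $2$-to-$1$ path can wind. Under this convention $n\in\{0,1\}$ almost surely, so the expectation collapses to $P(n=1)=P(\text{LERW crosses the zipper})$, completing the proof.
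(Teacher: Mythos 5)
Your proposal is correct and follows essentially the same route as the paper: apply \tref{Lentries}, expand numerator and denominator at $z=1$ using that each cycle factor $2-z-1/z$ vanishes to second order, and note that the simple tree path between the two boundary nodes crosses the zipper algebraically $0$ or $1$ times, so the resulting expected crossing number is the crossing probability. The only cosmetic difference is that you first derive the expected-winding formula and then invoke the two-topological-types dichotomy of \fref{20case}, whereas the paper builds that dichotomy into the expansion from the start.
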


\begin{proof}
  Let $A_k$, $B_k$, and $\alpha_k$ (respectively) be the weighted sum of
  cycle-rooted groves which contain $k$ cycles winding around the
  annulus, and in which nodes $1$ and $2$ are (respectively) connected
  by a path not crossing the zipper, connected by a path crossing the
  zipper, or are not connected.  Then
  \begin{align*}
    \Zv[{}_1^2] &= \sum_{k\geq 0} [A_k (2-z-1/z)^k + B_k z (2-z-1/z)^k] = A_0 + B_0 z + O((z-1)^2)\\
    \Zv[1|2] &= \sum_{k\geq 0} \alpha_k (2-z-1/z)^k = \alpha_0 + O((z-1)^2)
  \end{align*}
  By \tref{Lentries}, $\L_{1,2} = \Zv[{}_1^2]/\Zv[1|2]$, so
  \begin{align*}
    \frac{\partial}{\partial z}\log\L_{1,2}(z) &=
    \frac{\partial_z\Zv[{}_1^2]}{\Zv[{}_1^2]} - \frac{\partial_z \Zv[1|2]}{\Zv[1|2]}
    = \frac{B_0}{A_0+B_0} + O(z-1).  \qedhere
  \end{align*}
\end{proof}

\begin{figure}[h]
  \begin{center}\includegraphics[width=2.5in]{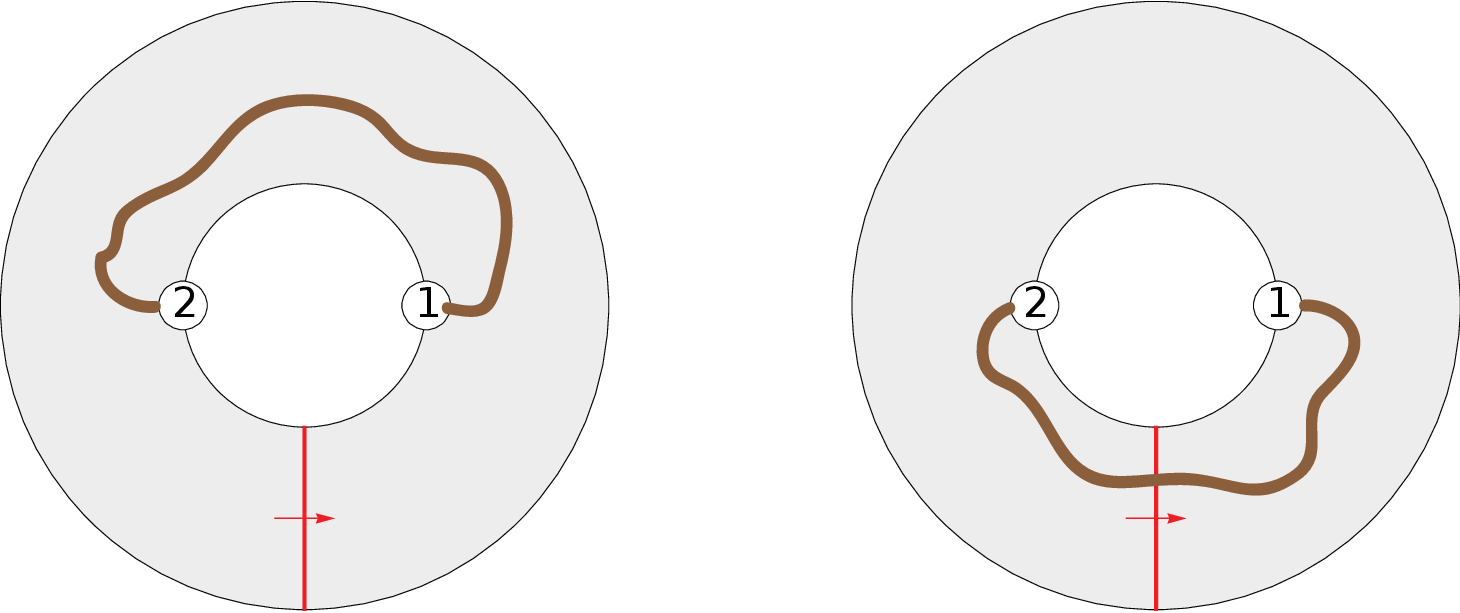}\end{center}
  \vspace{-8pt}
  \caption{The two topologically distinct ways to connect the nodes on an annulus with $(2,0)$ boundary nodes. \label{20case}}
\end{figure}

\subsection{Annulus with \texorpdfstring{$(1,1)$}{(1,1)} boundary nodes}

\begin{figure}[t]
  \begin{center}\includegraphics[width=\textwidth]{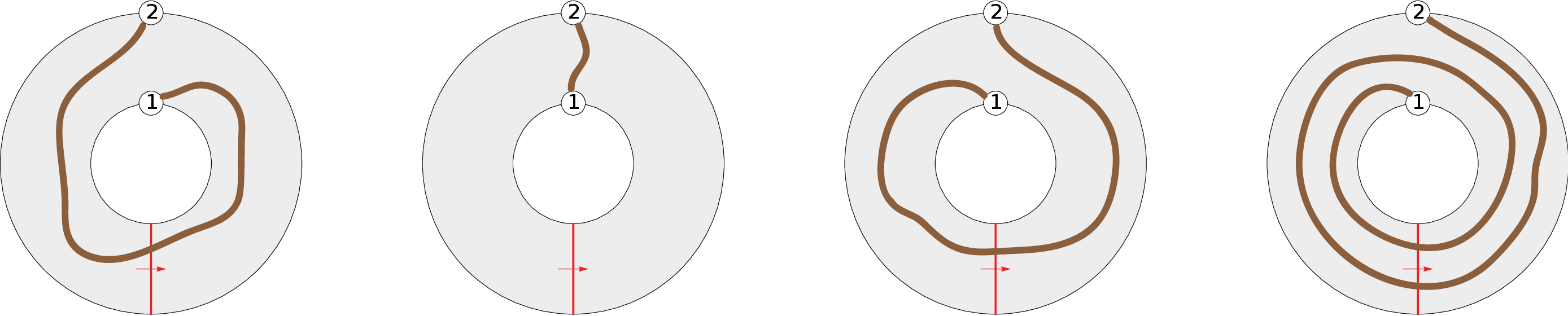}\end{center}
  \caption{Different numbers of windings of the path to $1$ from $2$.
  These configurations contribute to $c_{1}$, $c_0$, $c_{-1}$, and $c_{-2}$
  respectively.\label{11case}}
\end{figure}

We let $c_j$ denote the weighted sum of CRGs connecting $1$ and $2$
and such that the path from $1$ to $2$ crosses the zipper $j$ times
algebraically (see \fref{11case}).  Then $\Zv[{}_1^2]=\sum_{j\in\Z}
c_jz^j$, from which one can extract $c_j$ for each $j$.
\tref{Lentries} shows that $\L_{1,2}=\Zv[{}_1^2]/\Zv[1|2]$,
so $\L_{1,2}$ and $\Zv[1|2]$ together determine the winding
distribution (which we knew already from \tref{thm:all-types}).
But $\Zv[1|2]$ is also a function of $z$, so $\L_{1,2}$ does not by
itself determine the winding distribution.  But from $\L_{1,2}$ we
can extract the expected number of algebraic crossings of the zipper
via
\[ \E[\text{\# algebraic crossing of zipper}]=\left.\frac{\partial}{\partial z} \log\L_{1,2}\right|_{z=1}.\]

\subsection{Annulus with \texorpdfstring{$(3,0)$}{(3,0)} boundary nodes}

This is a case which can be derived from the $(2,0)$ case using \tref{LABC}.
Suppose nodes $1,2,3$ are in counterclockwise order on the inner boundary,
with a counterclockwise zipper between nodes $1$ and $3$.
Consider the case when all nodes are connected;
there are three possible configurations $A_1,A_2,A_3$
correspond to which complementary component of the triple connection
the outer boundary component lies (see \fref{30case}).  The numerator of $\L_{\;1,3}^{2,3}$
is $A_1+A_2+z A_3$, the numerator of $\L_{\;1,2}^{3,2}$ is
$z A_1+A_2+z A_3$, and the numerator of $\L_{\;2,1}^{3,1}$ is $z A_1+A_2+A_3.$
These three quantities suffice to determine $A_1,A_2,A_3$.
\begin{figure}[htbp]
\begin{center}
\includegraphics[height=1.in]{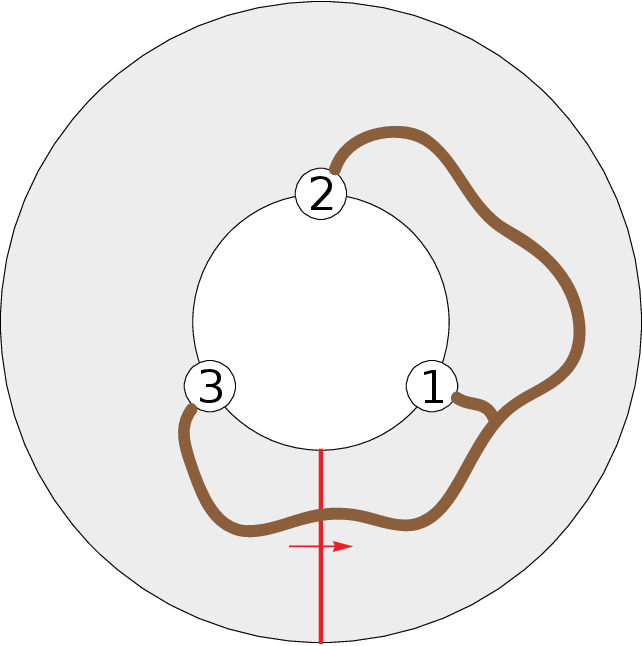} \hfil
\includegraphics[height=1.in]{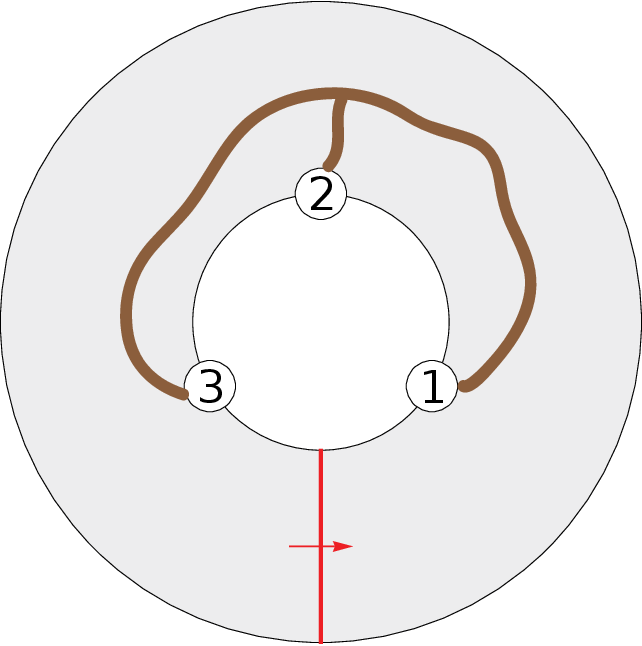} \hfil
\includegraphics[height=1.in]{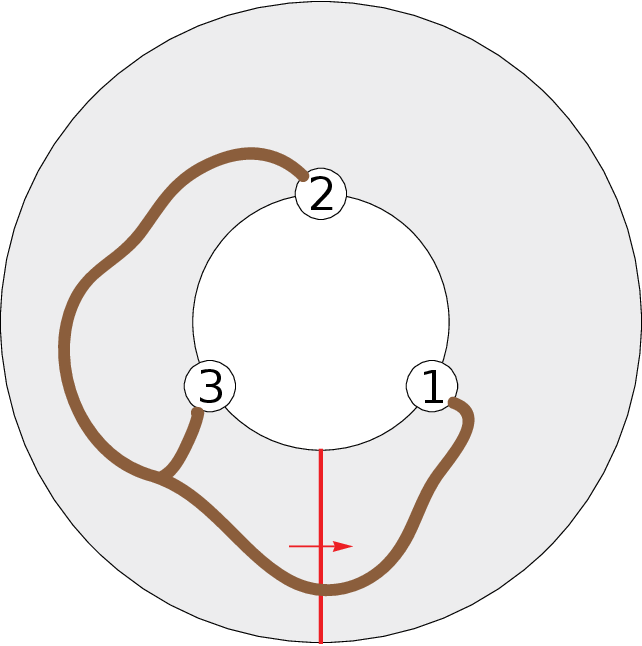}
\end{center}
\caption{\label{30case}The three topologically distinct subcases when
  the three nodes are connected (for the annulus with $(3,0)$ nodes).}
\end{figure}

\subsection{Annulus with \texorpdfstring{$(3,1)$}{(3,1)} boundary nodes} \label{sec:annulus-3-1}

This is a case we
\begin{window}[0,r,\includegraphics[height=1.in]{surface-graph-3}\raisebox{-1in}{\rule{0pt}{1.4in}},{}]
\noindent will need when we do the LERW computations.
Suppose there are
$4$ nodes in all, with nodes $1,2,3$ on the inner boundary in
counterclockwise order and $4$ on the outer boundary. Suppose the zipper
starts between $1$ and $3$ and is oriented counterclockwise, as in the
figure.  We wish to compute the ratios $Z[1,2|3,4]/Z[1|2|3|4]$
and  $Z[1,2|3,4]/Z[1,2,3,4]$ (as before, $Z[\sigma]$ denotes the weighted sum of groves of type~$\sigma$, for the trivial bundle).
\end{window}

Recall that $\Zv[{}_1^2|{}_3^4]$ denotes the weighted sum of cycle-rooted
groves of type $\,{}^2_1|{}^4_3\,$, times the parallel transport of
the path to node $1$ from $2$ and the path to node $3$ from node $4$,
so that for a trivial bundle, $\Zv[{}_1^2|{}_3^4]=Z[1,2|3,4]$.
Because of the path connecting nodes $3$ and $4$, in fact there will
be no cycles in the CRG.
Similarly, $\Zv[{}_1^3|{}_2^4]$ and $\Zv[{}_2^3|{}_1^4]$ denote the
weighted sum of CRGs of type $\,{}_1^3|{}_2^4\,$ and
$\,{}_2^3|{}_1^4\,$, times their respective parallel transports.

By \tref{LABC}, $\L_{\;1,2}^{3,4}$ has
numerator counting connections $\,{}^3_1|{}^4_2\,$ and
$\,{}^4_1|{}^3_2\,$ (with a minus sign).  Similarly for $\L_{\;1,3}^{2,4}$
and $\L_{\;1,4}^{2,3}$.  With $\Zv_0$ denoting the sum of CRSFs, we have
\begin{equation}
\begin{aligned}
\Zv_0 \det\L_{\;1,3}^{2,4}&=\Zv[{}_1^2|{}_3^4] - \Zv[{}_3^2|{}_1^4] = \Zv[{}_1^2|{}_3^4] - \Zv[{}_2^3|{}_1^4] \\
\Zv_0 \det\L_{\;3,2}^{1,4}&=\Zv[{}_3^1|{}_2^4] - \Zv[{}_2^1|{}_3^4]
 = \Zv[{}_3^1|{}_2^4] - \Zv[{}_1^2|{}_3^4] \\
\Zv_0 \det\L_{\;2,1}^{3,4}&=\Zv[{}_2^3|{}_1^4] - \Zv[{}_1^3|{}_2^4]
 = \Zv[{}_2^3|{}_1^4] - z^2 \Zv[{}_3^1|{}_2^4]
\end{aligned}
\label{sys:3,1}
\end{equation}

As a consequence
\begin{equation}\label{A12form}
\frac{\Zv[{}_1^2|{}_3^4]}{\Zv_0}=\frac{\L_{\;1,3}^{2,4} + z^2 \L_{\;3,2}^{1,4}+\L_{\;2,1}^{3,4}}{1-z^2}.
\end{equation}
When $z\to 1$ both the numerator and denominator of \eqref{A12form}
converge to $0$, so we evaluate the limiting ratio using l'H\^opital's
rule.  We can expand $\L_{u,v}=L_{u,v}+(z-1) L'_{u,v}+O((z-1)^2)$,
where $L_{u,v}$ is symmetric and $L'_{u,v}$ is antisymmetric.  Then in
the limit $z\to1$ this gives
\begin{align}
\frac{Z[1,2|3,4]}{Z[1|2|3|4]} &= \lim_{z\to 1}
\frac{\Zv[{}_1^2|{}_3^4]}{\Zv[1|2|3|4]}
= \lim_{z\to 1} \frac{\L_{\;1,3}^{2,4} + z^2 \L_{\;3,2}^{1,4}+\L_{\;2,1}^{3,4}}{1-z^2} \notag\\
&=
- L'_{1,2} L_{3,4} -  L'_{2,3} L_{1,4} -  L'_{3,1} L_{2,4}
+ L_{1,2} L_{3,4} - L_{1,3} L_{2,4}. \label{Pu(12|34)}
\end{align}

It is useful to express this above formula in terms of the Green's function
where the $n$th node is the boundary.
We can express $L_{i,n}=-\sum_{j=1}^{n-1} L_{i,j}$ and
$\Zv[1,2,3,4]/\Zv[1|2|3|4] =
\det[\L_{i,j}]^{j=1,\dots,n-1}_{i=1,\dots,n-1}$ using \tref{Lprincminors}.  Let
$\Gv_{i,j} = G_{i,j} + (z-1)G'_{i,j} + O((z-1)^2)$.  Then using \eqref{LG} to express $\L$ in terms of $\Gv$ and taking the limit
$z\to1$, some algebraic manipulation yields
\begin{equation}\label{eee}
\frac{Z[1,2|3,4]}{Z[1,2,3,4]}
= - G'_{1,2} - G'_{2,3} - G'_{3,1} +G_{1,2} - G_{1,3} .
\end{equation}
(\cref{L2G} gives a much easier way to convert an $L$-formula into a $G$-formula.)

While the left-hand sides of these formulas \eqref{Pu(12|34)} and
\eqref{eee} are symmetric in nodes $1$ and~$2$, the right-hand
sides are not.  This asymmetry is due to the location of the zipper,
and moving the zipper would change the values of the $L'_{i,j}$ and the
$G'_{i,j}$ (albeit in a predictable way).  If we keep the zipper
between nodes $1$ and~$3$, then we should expect a different formula
for $Z[1,3|2,4]$ than what we would get by permuting the indices
$1,2,3$ in the formula for $Z[1,2|3,4]$.  Indeed, if we carry out the
computations as above, we obtain
\begin{subequations}\label{eq:3-1-pairing}
\begin{align}
\frac{Z[1,2|3,4]}{Z[1|2|3|4]} &= - L'_{1,2} L_{3,4} -  L'_{2,3} L_{1,4} -  L'_{3,1} L_{2,4} + L_{1,2} L_{3,4} - L_{1,3} L_{2,4} \label{Z(12|34)}\\
\frac{Z[1,3|2,4]}{Z[1|2|3|4]} &= - L'_{1,2} L_{3,4} -  L'_{2,3} L_{1,4} -  L'_{3,1} L_{2,4} \\
\frac{Z[2,3|1,4]}{Z[1|2|3|4]} &= - L'_{1,2} L_{3,4} -  L'_{2,3} L_{1,4} -  L'_{3,1} L_{2,4} + L_{1,4} L_{2,3} - L_{1,3} L_{2,4}
\end{align}
\end{subequations}
and
\begin{subequations}
\begin{align}\label{Pc(12|34)}
\frac{Z[1,2|3,4]}{Z[1,2,3,4]} &= - G'_{1,2} - G'_{2,3} - G'_{3,1} + G_{1,2} - G_{1,3}\\
\frac{Z[1,3|2,4]}{Z[1,2,3,4]} &= - G'_{1,2} - G'_{2,3} - G'_{3,1} \label{Pc(13|24)} \\
\frac{Z[2,3|1,4]}{Z[1,2,3,4]} &= - G'_{1,2} - G'_{2,3} - G'_{3,1} + G_{2,3} - G_{1,3} \label{Pc(23|14)}
\end{align}
\end{subequations}

\subsection{Pair of pants with \texorpdfstring{$(2,0,0)$}{(2,0,0)} boundary nodes}
\label{sec:pants-2-0-0}

The following two annulus cases are most easily viewed as special
cases of the case when the surface $\Sigma$ is a pair of pants with $2$ nodes on one boundary
and no other nodes, see \fref{pants:2-0-0}. Put an $\SL$ bundle with
monodromies $A$ and $B$ around the two central holes $C_A$ and $C_B$, and
supported on zippers from the holes to the boundary between nodes~$1$
and~$2$.

\begin{figure}[b]
$\ffrac{\includegraphics[scale=0.19]{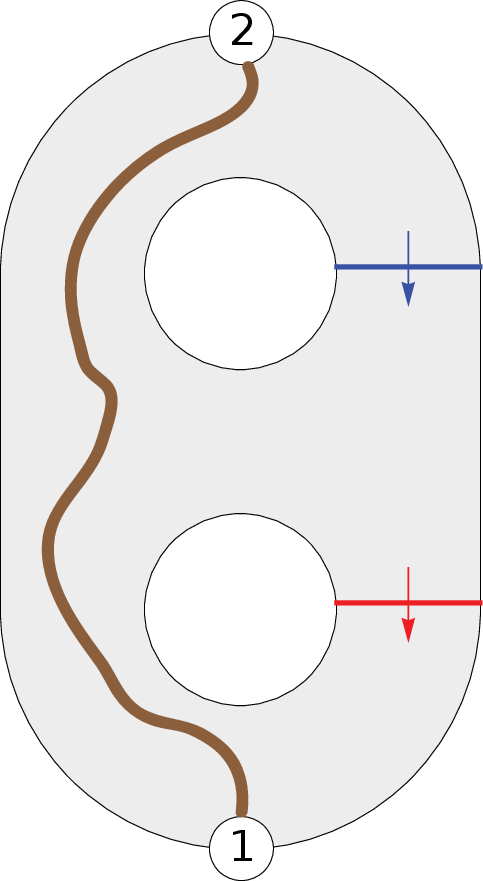}}{\mathclap{\ffrac{\displaystyle I\mathrlap{\phantom{A^{-1}}}}{\phantom{\scriptsize(\!AB\!)A(\!AB\!)^{-1}}}}}$ \hfill
$\ffrac{\includegraphics[scale=0.19]{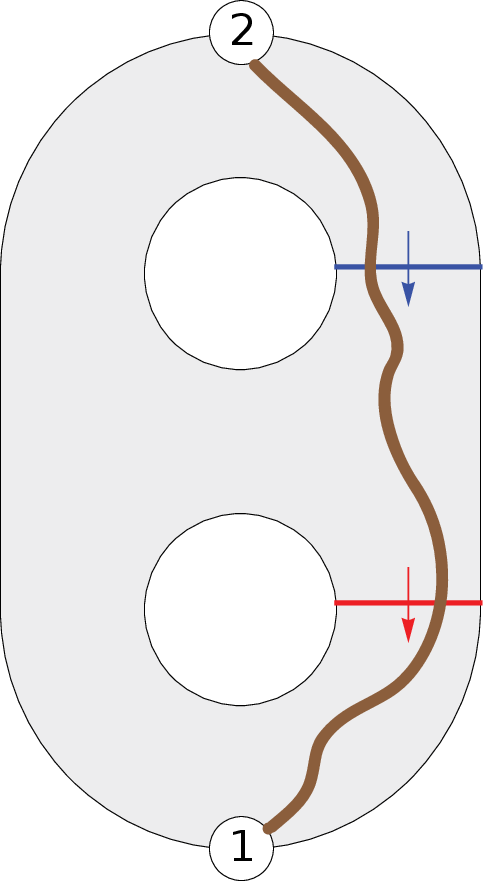}}{\mathclap{\ffrac{\displaystyle AB\mathrlap{\phantom{A^{-1}}}}{\phantom{\scriptsize(\!AB\!)A(\!AB\!)^{-1}}}}}$ \hfill
$\ffrac{\includegraphics[scale=0.19]{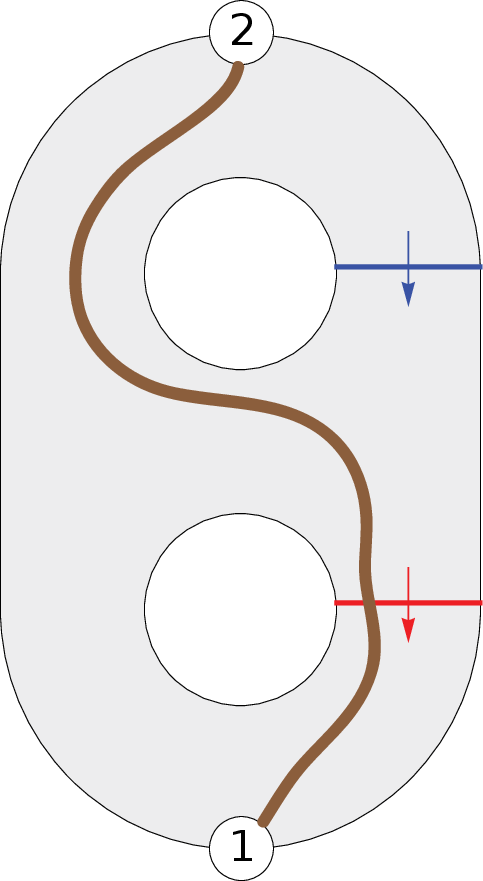}}{\mathclap{\ffrac{\displaystyle A\mathrlap{\phantom{A^{-1}}}}{\scriptsize(\!AB\!)^0\!A(\!AB\!)^{0}}}}$ \hfill
$\ffrac{\includegraphics[scale=0.19]{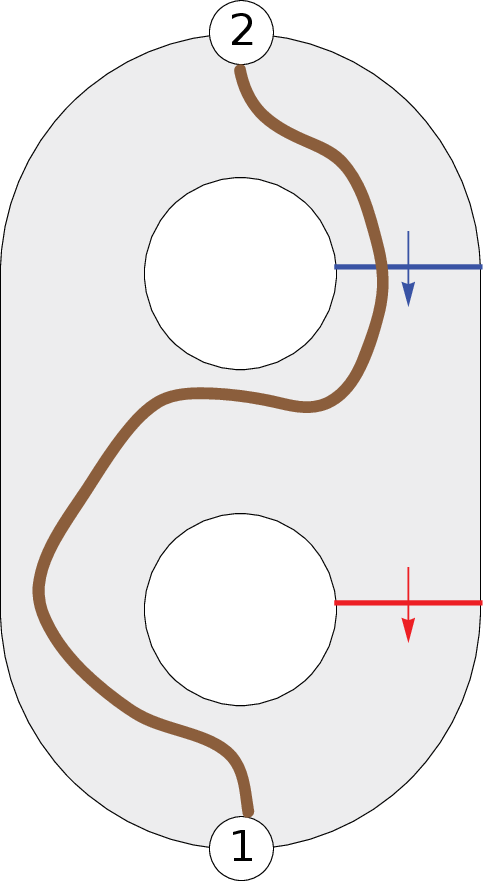}}{\mathclap{\ffrac{\displaystyle B\mathrlap{\phantom{A^{-1}}}}{\scriptsize(\!AB\!)^0B(\!AB\!)^{0}}}}$ \hfill
$\ffrac{\includegraphics[scale=0.19]{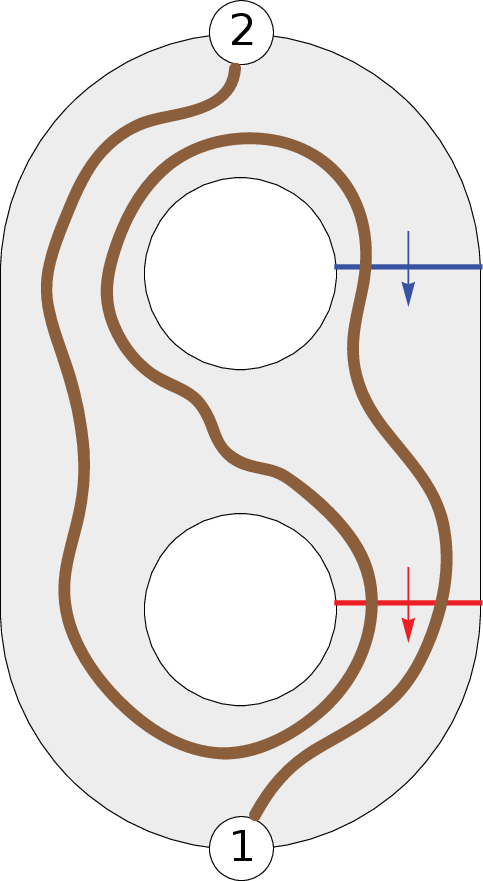}}{\mathclap{\ffrac{\displaystyle ABA^{-1}}{\scriptsize(\!AB\!)B(\!AB\!)^{-1}}}}$ \hfill
$\ffrac{\includegraphics[scale=0.19]{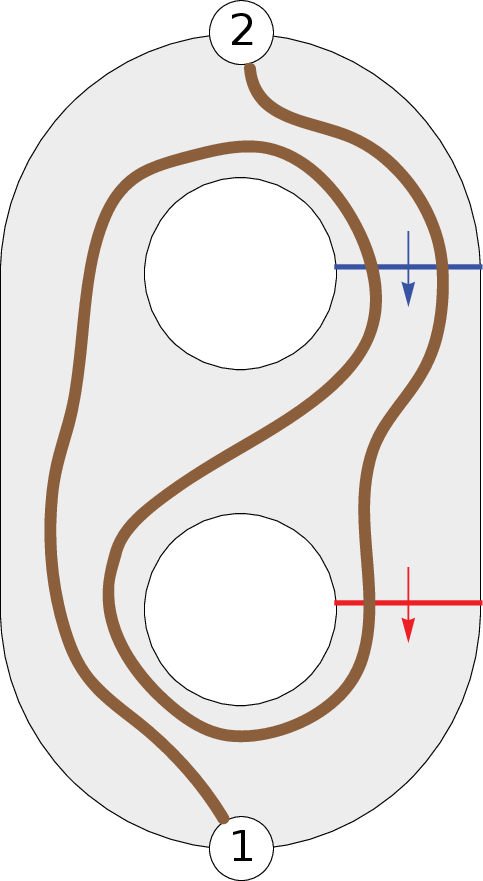}}{\mathclap{\ffrac{\displaystyle B^{-1}\!\!AB}{\scriptsize(\!AB\!)^{-1}\!A(\!AB\!)}}}$ \hfill
$\ffrac{\includegraphics[scale=0.19]{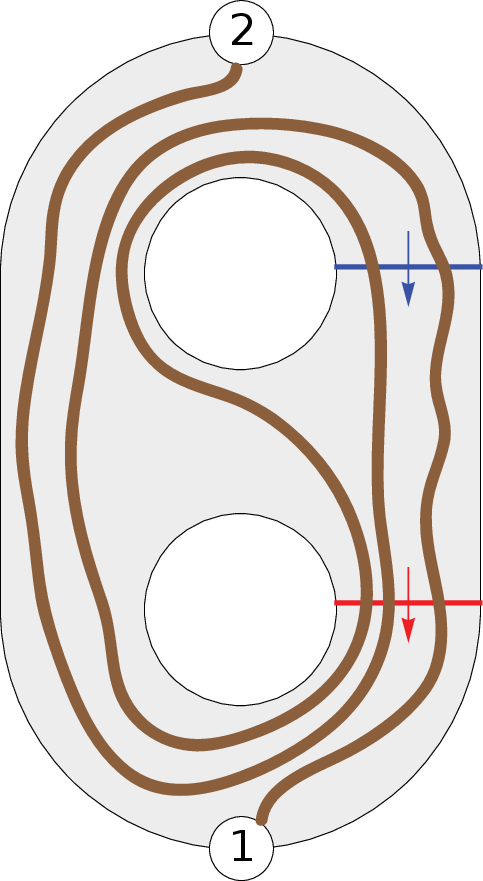}}{\mathclap{\ffrac{\displaystyle ABAB^{-1}\!\!A^{-1}}{\scriptsize(\!AB\!)A(\!AB\!)^{-1}}}}$
\caption{Some of the possible topological types for the path between nodes 1 and 2
  when the surface is a pair of pants with both nodes on one
  boundary.  The lower zipper (in red) has parallel transport $A$, and
  the upper zipper (in blue) has parallel transport~$B$.  For each diagram,
  the parallel transport of the path to~$1$ from~$2$ is shown.}
\label{pants:2-0-0}
\end{figure}

The parallel transport of a path to node $1$ from node $2$ is of the form
\begin{enumerate}
\item $I$, if the path has both holes on its right
\item $AB$, if the path has both holes on its left
\item $(AB)^{-k} A(AB)^{k}$ for some $k\in\Z$, if the path has the lower hole on its left and the upper hole on its right, and $k$ is the algebraic number of crossings that a dual path from the lower hole to the left boundary makes across the upper zipper
\item $(AB)^{-k} B(AB)^{k}$ for some $k\in\Z$, if the path has the lower hole on its right and the upper hole on its left, and $k$ is the algebraic number of crossings that a dual path from the upper hole to the left boundary makes across the lower zipper
\end{enumerate}
We let $c^{\text{(RR)}}$, $c^{\text{(LL)}}$, $c^{\text{(LR)}}_k$, and
$c^{\text{(RL)}}_k$ (for $k\in\Z$) denote the weighted sum of
cycle-rooted groves of the above types.  We further let
$c^{\text{(RR)}}_\ell$ and $c^{\text{(LL)}}_\ell$ denote the number of
cycle-rooted groves of type $c^{\text{(RR)}}$ and $c^{\text{(LL)}}$ in
which there are $\ell\in\N$ loops that surround both holes.

We need to choose matrices $A$ and $B$ for which $\det A=1$ and $\det
B=1$, and it is convenient to choose them so that $\Tr(A)=2$ and
$\Tr(B)=2$ (so that loops which surround one hole but not the other
have weight $0$), and so that $AB$ is diagonal.  We can take
\[
 A=\begin{bmatrix}\frac{2 x}{x+1} & y \\
 -\frac{(x-1)^2}{y (x+1)^2} & \frac{2}{x+1}\end{bmatrix}\qquad
 B=\begin{bmatrix} \frac{2 x}{x+1} & -\frac{y}{x} \\[3pt]
 \frac{(x-1)^2 x}{y (x+1)^2} & \frac{2}{x+1}\end{bmatrix}\qquad AB=\begin{bmatrix}x&0\\0&1/x\end{bmatrix}
\]
for variables $x$ and $y$.  Then
since $AB$ is diagonal, it is straightforward to evaluate
\begin{align*}
 (AB)^{-k} A(AB)^{k}&=\begin{bmatrix}\frac{2 x}{x+1} & y x^{-2 k} \\[3pt]
 -\frac{(x-1)^2 x^{2 k}}{y (x+1)^2} & \frac{2}{x+1}\end{bmatrix}\\
 (AB)^{-k} B(AB)^{k}&=\begin{bmatrix}\frac{2 x}{x+1} & -y x^{-2 k-1} \\[3pt]
 \frac{(x-1)^2 x^{1+2 k}}{y (x+1)^2} & \frac{2}{x+1}\end{bmatrix}.
\end{align*}

\begin{multline*}
\Zv[{}_1^2]=I \sum_{\ell\in\N} c^{\text{(RR)}}_\ell (2-x-1/x)^\ell + AB \sum_{\ell\in\N} c^{\text{(LL)}}_\ell (2-x-1/x)^\ell \\+
\sum_{k\in\Z} c^{\text{(LR)}}_k(AB)^{-k} A(AB)^{k}+\sum_{k\in\Z} c^{\text{(RL)}}_k(AB)^{-k} B(AB)^{k}.
\end{multline*}

Only the last two sums contribute to the $1,2$ entry of $\Zv[{}_1^2]$:
\[\Zv[{}_1^2]_{1,2}= y \sum_{k\in\Z}\big[c_k^{\text{(LR)}}x^{-2k}-c_k^{\text{(RL)}}x^{-2k-1}\big].\]
This is a Laurent series in $x$ from which one can extract the
coefficients $c_k^{\text{(LR)}}$ and $c_k^{\text{(RL)}}$.  Once these
are known, the coefficients $c_\ell^{\text{(RR)}}$ and
$c_\ell^{\text{(LL)}}$ can be extracted from $\Zv[{}_1^2]_{1,1}$ and $\Zv[{}_1^2]_{2,2}$.

\subsection{Annulus with \texorpdfstring{$(2,2)$}{(2,2)} boundary nodes}
\label{sec:annulus-2-2}

On the annulus with $4$ nodes, put nodes $1,2$ on the outer boundary and $3,4$ on the inner boundary.
Suppose we wish to compute the probability of the connections $13|24$
and $14|23$.
This computation can be used to compute the
probability that an edge $e$ is on the LERW from node~$1$ to node~$2$
(an equivalent calculation was done in \cite{Kenyon.54}).

Insert an extra edge $e_{34}$
from $3$ to $4$; this ``splits'' the inner boundary into two (see Figure \ref{annulus22}).
This is then a special case of the construction of
section \ref{sec:pants-2-0-0}.
In the notation of that section,
it suffices to use the limit $x\to-1$ and the value $x=1$ to distinguish crossings
$13|24$ and $14|23$: $[\Zv_{1,2}]_{1,2}$ in the limit $x\to-1$ gives the sum and in the case $x=1$ the difference
of the two desired quantities.
\begin{figure}[htbp]
\begin{center}
\includegraphics[scale=0.3]{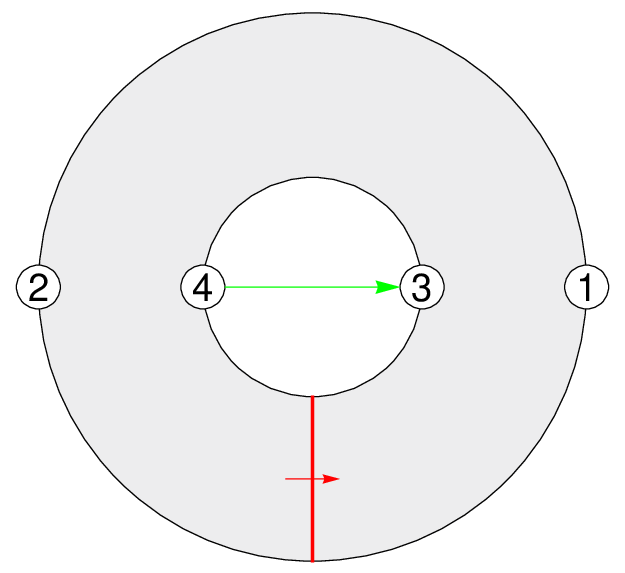}
\end{center}
\caption{Computing crossings $13|24$ and $14|23$.
\label{annulus22}}
\end{figure}

\subsection{Annulus with \texorpdfstring{$(4,0)$}{(4,0)} boundary nodes}
\label{sec:annulus-4-0}

When $4$ nodes are on the outer boundary and none on the inner (and
nodes $1,2,3,4$ are in counterclockwise order), the
case we have not yet discussed is the $14|23$ case: there are three
subcases depending on whether the paths from $1$ to $4$ and $2$ to $3$
go left or right of the inner boundary.

Again this is a special case of the $(2,0,0)$ case if we put an extra
edge between nodes $3$ and $4$.

In this case the only possible parallel transports to $1$ from $2$ are
(using the connection from that section)
\[AB,I,B,A,B^{-1}\!\!AB,\]
and only in the first two cases is there a possible extra loop surrounding
both $C_A$ and $C_B$.
Thus
\begin{multline*}
\Zv_{1,2}=I\big(c^{(\text{RR})}_0+c^{(\text{RR})}_1(2-x-1/x)\big)+AB\big(c_0^{(\text{LL})}+c_1^{(\text{LL})}(2-x-1/x)\big)\\
+B c_0^{(\text{RL})}
+A c_0^{(\text{LR})}+B^{-1}\!\!AB c_1^{(\text{LR})}.
\end{multline*}
The $1,2$ entry in $\Zv_{1,2}$ is
\[[\Zv_{1,2}]_{1,2}=-\frac{y}{x} c_0^{(\text{RL})}+y c_0^{(\text{LR})}+\frac{y}{x^2}c_1^{(\text{LR})}.\]
From this we can extract the three cases of interest.

\section{Annular-one surface graphs} \label{annular}
\label{annular1}

Suppose that the graph has $n$ nodes and is embedded on an annulus
such that nodes $1,\dots,n-1$
are on the inner
boundary of the annulus arranged in counterclockwise order, and node
$n$ is by itself on the outer boundary, and that the zipper is between
nodes $n-1$ and $1$ and directed in the counterclockwise direction
(from $n-1$ to $1$), as in section~\ref{sec:annulus-3-1}.  We call
these annular-one surface graphs; they are the next case after
circular planar graphs, and they play an important role in our
loop-erased random walk calculations in section~\ref{LERW}.
Annular-one surface graphs of course include the $(1,1)$ and $(3,1)$
cases that we did in the last section, but for expository purposes we
treated those special cases separately.  We are interested in
computing, for any partition $\sigma$ in which $n$ is not in a component
by itself, the weighted sum of groves of
type $\sigma$, which we denote $Z[\sigma]$.  We show how to compute
$Z[\sigma]/Z[1|2|\cdots|n]$ in terms of the response matrix $\L$, and
$Z[\sigma]/Z[1,2,\ldots,n]$ in terms of the Green's function $\Gv$.

\subsection{Reduction to partial pairings}
\label{annular1-partitions}

\begin{theorem}
  For a circular planar graph with $n$ nodes, for any partition
  $\sigma$ of the nodes, we can write $Z[\sigma]=\sum_m \alpha_m Z[\tau_m]$,
  where the $\tau_m$'s are partial pairings.
\end{theorem}
\begin{proof}
Let $i$ be the
smallest node label that is in a part of $\sigma$ of size more than~$2$,
and let $s$ be the size of this part.  We measure the
``complexity'' of partition $\sigma$ by $n (n-i) + s$.  Let $j$ be the
next-smallest item in $i$'s part of $\sigma$.  Let $\sigma^*$ denote
the partition obtained from $\sigma$ by ``de-listing''~$j$, i.e., by
regarding $j$ as an internal vertex which can occur in any of
$\sigma^*$'s parts.  If $\sigma$ has $k$ parts, then we can write
$Z[\sigma^*]=\sum_{\ell=1}^k Z[\sigma^*\text{\ with $j$ added to $\ell$th part}]$.
One of these terms is
 $Z[\sigma]$, so
\[ Z[\sigma] = Z[\sigma^*] - \sum_\ell Z[\sigma^*\text{\ with $j$ added to $\ell$th part}].\]
where the sum runs over all parts of $\sigma^*$ except the one
containing $i$.  Because the graph is circular planar, unless $j$ is
added to a part of $\sigma^*$ that is ``covered'' by the part
containing $i$, there will be no groves of that partition type.
Each nonzero term on the right has smaller complexity than
$\sigma$, so we can iterate this process to eventually express
$Z[\sigma]$ as a linear combination of $Z[\tau]$'s where $\tau$ is a partial pairing.
\end{proof}
For example,
\begin{align*}
  Z[1,5,8|2,3,4|6,7] &=   Z[1,8|2,3,4|6,7] - Z[1,8|2,3,4,5|6,7] - Z[1,8|2,3,4|5,6,7] \\
    &=   Z[1,8|2,4|6,7] - Z[1,8|2,5|6,7] - Z[1,8|2,4|5,7].
\end{align*}
There can be multiple such linear combinations for a given partition.

\begin{theorem} \label{combination-partial-pairing}
  For an annular-one surface graph with $n$ nodes, for any partition
  $\sigma$ of the nodes,
  we can write $Z[\sigma]=\sum_m \alpha_m Z[\tau_m]$,
  where the $\tau_m$'s are partial pairings.
\end{theorem}
\begin{proof}
If the part containing $n$ has size two, say that it is $\{h,n\}$,
then we list the remaining nodes in the order
$h+1,h+2,\dots,n-1,1,2,\dots,h-1$, and do the reductions described above
for circular planar graphs.  These will not increase the size of $n$'s part.
If the part containing $n$ has more than two nodes, then we first reduce its
size by internalizing nodes in the part other than $n$ until it has size two.
If $n$ started out in a singleton part, we
start out as in the circular planar case (with the order $1,\dots,n-1$)
until a node gets adjoined to $n$'s part.
\end{proof}

A similar reduction can be done
for the annulus with 2 nodes on each boundary,
but not for the annulus with 2 nodes on one boundary and 3 on the other.

\subsection{Partial pairings in terms of the response matrix}\label{annular1-complete-pairing}

Recall the computation $Z[1,2|3,4]/Z[1|2|3|4]$ in
\sref{sec:annulus-3-1}.  \tref{LABC} provided a family of equations
for subdeterminants of the response matrix in terms of grove partition
functions.  We solved these equations for the grove partition
functions in terms of the subdeterminants, and took the limit $z\to1$
to express $Z[1,2|3,4]/Z[1|2|3|4]$ in terms of $L_{u,v}$ and
$L'_{u,v}$.  We follow the same approach here.
For ease of exposition we focus on complete pairings.  Partial
pairings are handled in the same way, except that the subdeterminants
have extra rows and columns and minus signs corresponding to the internalized nodes
(recall \tref{LABC}.)
The singleton nodes have no effect on the determinant formulas, except insofar as
they affect the values of the $L_{u,v}$ and $L'_{u,v}$ variables.

For complete pairings,
there are $n-1$ ways to connect the two boundaries (ignoring windings).
When the annulus is cut along this connection, the domain becomes
planar, so there are $C_{n/2-1}$ ways to pair up the remaining nodes,
where $C_k$ is the $k$th Catalan number.
We have
\[ (n-1) C_{n/2-1} = (n-1) \frac{(n-2)!}{(n/2-1)!(n/2)!}
   = \frac12\frac{n!}{(n/2)!(n/2)!} = \frac12\binom{n}{n/2}.\]
So the number of annular pairings equals the number of equations
arising from the determinant formula.  In fact, there is a natural
bijection between the $\L$-determinants and the annular pairings which
is based on the cycle lemma of Dvoretzky and Motzkin \cite{DM}, which
we use in \aref{inverse} to show that these equations are linearly
independent for any even~$n$.

In any directed pairing, the
connection between node $n$ and the other boundary determines whether
or not and in what direction that any other directed pair crosses the
zipper.  Reversing the direction of any directed pair (other than the
pair containing $n$) that crosses the zipper introduces a factor of
$z^2$.  Since only even powers of $z$ appear, it is convenient to
change variables to $\zeta=z^2.$ For example,
$\Zv[{}_3^5|{}_1^2|{}_4^6] = \zeta \Zv[{}_5^3|{}_1^2|{}_4^6]$.

For more compact notation let $\ddddot\Zv_\sigma:=\Zv_\sigma/\Zv[1|2|\dots|n]$.
When expanding an $\L$-determinant into a signed sum of
$\ddddot\Zv_\sigma$'s, where $\sigma$ is a directed pairing, $\ddddot\Zv_\sigma$ can be
put into a canonical form $\zeta^{\text{power}} \ddddot\Zv_{\sigma'}$, where
$\sigma'$ is a directed pairing in which the pairs are directed counterclockwise
around the annulus.  Our goal is to solve for
$\ddddot\Zv_\tau=\ddddot\Zv_\tau(\zeta)$ in terms of the
$\L$-determinants and $\zeta$, and take the limit $\zeta\to 1$.

The system of linear equations can be represented by a matrix $\A_n$.
When recording the linear equation corresponding to $\det\L_R^S$, we
can re-order $R$ and $S$ in any manner, and this would just scale row
$\det\L_R^S$ of $\A_n$ by $\pm1$, which has no effect on our ability
to solve for the $\ddddot\Zv[\sigma]$s.  But the signs in $\A_n^{-1}$
are surprisingly nice when we order $R$ and $S$ in a manner that
corresponds to $\det\L_R^S$'s associated pairing in the aforementioned
bijection.  This canonical ordering is described in \aref{pairing-set}.

$\A_2$ is just the $1\times 1$ matrix whose entry is $1$
(since $\Zv[{}_1^2]/\Zv[1|2] = \L_{1,2} = \det \L_1^2$):

\[\begin{gathered}\phantom{\rf{[{}_1^2]}}\\
\A_2\quad=\quad
\bordermatrix[{[]}]{
& \srf{\ddddot{\Zv}[{}_1^2]}\\
\det\L_{\;1}^{2}& 1\\[2pt]
}
\end{gathered}\]
\enlargethispage{12pt}
The matrix $\A_4$ encodes the system of equations~\eqref{sys:3,1} we saw for the $(3,1)$ case:
\[\begin{gathered}\phantom{\rf{[{}_1^2|{}_3^4]}}\\
\A_4\quad=\quad
\bordermatrix[{[]}]{
& \srf{\ddddot{\Zv}[{}_1^2|{}_3^4]}& \srf{\ddddot{\Zv}[{}_3^1|{}_2^4]}& \srf{\ddddot{\Zv}[{}_2^3|{}_1^4]}\\
\det\L_{\;1, 3}^{2, 4}& 1& 0& -1\\[2pt]
\det\L_{\;3, 2}^{1, 4}& -1& 1& 0\\[2pt]
\det\L_{\;2, 1}^{3, 4}& 0& -\zeta & 1\\[2pt]
}
\end{gathered}\]
The first two rows of the next matrix $\A_6$ are
\[\begin{gathered}\phantom{\rf{[{}_1^2|{}_3^4|{}_5^6]}}\\
\bordermatrix[{[]}]{
& \srf{\ddddot{\Zv}[{}_1^2|{}_3^4|{}_5^6]}& \srf{\ddddot{\Zv}[{}_1^4|{}_2^3|{}_5^6]}& \srf{\ddddot{\Zv}[{}_5^1|{}_2^3|{}_4^6]}& \srf{\ddddot{\Zv}[{}_5^3|{}_1^2|{}_4^6]}& \srf{\ddddot{\Zv}[{}_4^5|{}_1^2|{}_3^6]}& \srf{\ddddot{\Zv}[{}_4^2|{}_5^1|{}_3^6]}& \srf{\ddddot{\Zv}[{}_3^4|{}_5^1|{}_2^6]}& \srf{\ddddot{\Zv}[{}_3^1|{}_4^5|{}_2^6]}& \srf{\ddddot{\Zv}[{}_2^3|{}_4^5|{}_1^6]}& \srf{\ddddot{\Zv}[{}_2^5|{}_3^4|{}_1^6]}\\
\det\L_{\;1, 3, 5}^{2, 4, 6}& 1& -1& 0& 0& -1& 0& 0& 0& 1& -1\\[2pt]
\det\L_{\;1, 2, 5}^{4, 3, 6}& 0& 1& 0& 0& 0& 0& 0& \zeta & -1& 0\\[2pt]
}
\end{gathered}\]
\old{
\[\begin{gathered}\phantom{\rf{[{}_1^2|{}_3^4|{}_5^6]}}\\
\bordermatrix[{[]}]{
& \srf{\ddddot{\Zv}[{}_1^2|{}_3^4|{}_5^6]}& \srf{\ddddot{\Zv}[{}_1^4|{}_2^3|{}_5^6]}& \srf{\ddddot{\Zv}[{}_5^1|{}_2^3|{}_4^6]}& \srf{\ddddot{\Zv}[{}_5^3|{}_1^2|{}_4^6]}& \srf{\ddddot{\Zv}[{}_4^5|{}_1^2|{}_3^6]}& \srf{\ddddot{\Zv}[{}_4^2|{}_5^1|{}_3^6]}& \srf{\ddddot{\Zv}[{}_3^4|{}_5^1|{}_2^6]}& \srf{\ddddot{\Zv}[{}_3^1|{}_4^5|{}_2^6]}& \srf{\ddddot{\Zv}[{}_2^3|{}_4^5|{}_1^6]}& \srf{\ddddot{\Zv}[{}_2^5|{}_3^4|{}_1^6]}\\
\det\L_{\;1, 3, 5}^{2, 4, 6}& 1& -1& 0& 0& -1& 0& 0& 0& 1& -1\\[2pt]
\det\L_{\;1, 2, 5}^{4, 3, 6}& 0& 1& 0& 0& 0& 0& 0& \zeta & -1& 0\\[2pt]
\det\L_{\;5, 2, 4}^{1, 3, 6}& 1& -1& 1& -1& 0& 0& -1& 0& 0& 0\\[2pt]
\det\L_{\;5, 1, 4}^{3, 2, 6}& -1& 0& 0& 1& 0& 0& 0& 0& 0& 1\\[2pt]
\det\L_{\;4, 1, 3}^{5, 2, 6}& 0& 0& \zeta & -\zeta & 1& -\zeta & 0& 0& -1& 0\\[2pt]
\det\L_{\;4, 5, 3}^{2, 1, 6}& 0& 1& -1& 0& 0& 1& 0& 0& 0& 0\\[2pt]
\det\L_{\;3, 5, 2}^{4, 1, 6}& -1& 0& 0& 0& 1& -\zeta & 1& -1& 0& 0\\[2pt]
\det\L_{\;3, 4, 2}^{1, 5, 6}& 0& 0& 0& \zeta & -1& 0& 0& 1& 0& 0\\[2pt]
\det\L_{\;2, 4, 1}^{3, 5, 6}& 0& 0& -\zeta & 0& 0& 0& \zeta & -\zeta & 1& -1\\[2pt]
\det\L_{\;2, 3, 1}^{5, 4, 6}& 0& 0& 0& 0& 0& \zeta ^2& -\zeta & 0& 0& 1\\[2pt]
}
\end{gathered}\]
}

Each $\det\L_R^S$ is a signed-sum of $(n/2)!$ of the $\ddddot{\Zv}[\sigma]$'s, but not all of these $\sigma$'s can be embedded in the annulus, so the rows generally have fewer than $(n/2)!$ nonzero entries.  For each pairing $\sigma$ that embeds in the annulus, the column $\ddddot{\Zv}[\sigma]$ contains $2^{n/2-1}$ nonzero entries: for each pair $\{i,j\}$ in $\sigma$, except the pair containing $n$, either $i\in R$ and $j\in S$ or else $j\in R$ and $i\in S$.

The inverses $\A_n^{-1}$ of these matrices are
\[\begin{gathered}\phantom{\rf{\det\L_{\;1}^{2}}}\\
\A_2^{-1}\quad=\quad
\bordermatrix[{[]}]{
& \srf{\det\L_{\;1}^{2}}\\
\ddddot{\Zv}[{}_1^2]& 1\\[2pt]
}
\end{gathered}\]
\[\begin{gathered}\phantom{\rf{\det\L_{\;1, 3}^{2, 4}}}\\
\A_4^{-1}\quad=\quad
\bordermatrix[{[]}]{
& \srf{\det\L_{\;1, 3}^{2, 4}}& \srf{\det\L_{\;3, 2}^{1, 4}}& \srf{\det\L_{\;2, 1}^{3, 4}}\\
\ddddot{\Zv}[{}_1^2|{}_3^4]& 1& \zeta & 1\\[2pt]
\ddddot{\Zv}[{}_3^1|{}_2^4]& 1& 1& 1\\[2pt]
\ddddot{\Zv}[{}_2^3|{}_1^4]& \zeta & \zeta & 1\\[2pt]
}
\times\frac{1}{(1-\zeta)^{1}}\end{gathered}\]
and the first two rows of $\A_6^{-1}$ are
\[\begin{gathered}\phantom{\rf{\det\L_{\;1, 3, 5}^{2, 4, 6}}}\\
\bordermatrix[{[]}]{
& \srf{\det\L_{\;1, 3, 5}^{2, 4, 6}}& \srf{\det\L_{\;1, 2, 5}^{4, 3, 6}}& \srf{\det\L_{\;5, 2, 4}^{1, 3, 6}}& \srf{\det\L_{\;5, 1, 4}^{3, 2, 6}}& \srf{\det\L_{\;4, 1, 3}^{5, 2, 6}}& \srf{\det\L_{\;4, 5, 3}^{2, 1, 6}}& \srf{\det\L_{\;3, 5, 2}^{4, 1, 6}}& \srf{\det\L_{\;3, 4, 2}^{1, 5, 6}}& \srf{\det\L_{\;2, 4, 1}^{3, 5, 6}}& \srf{\det\L_{\;2, 3, 1}^{5, 4, 6}}\\
\ddddot{\Zv}[{}_1^2|{}_3^4|{}_5^6]& \zeta \!+\!1& \zeta \!+\!1& \zeta ^2\!+\!\zeta & 2 \zeta & \zeta \!+\!1& \zeta ^2\!+\!\zeta & 2 \zeta & 2 \zeta & \zeta \!+\!1& 2\\[2pt]
\ddddot{\Zv}[{}_1^4|{}_2^3|{}_5^6]& \zeta & 1& \zeta & \zeta & \zeta & \zeta ^2& \zeta & \zeta & 1& 1\\[2pt]
}
\times\frac{1}{(1-\zeta)^{2}}\end{gathered}\]
\old{
\[\begin{gathered}\phantom{\rf{\det\L_{\;1, 3, 5}^{2, 4, 6}}}\\
\bordermatrix[{[]}]{
& \srf{\det\L_{\;1, 3, 5}^{2, 4, 6}}& \srf{\det\L_{\;1, 2, 5}^{4, 3, 6}}& \srf{\det\L_{\;5, 2, 4}^{1, 3, 6}}& \srf{\det\L_{\;5, 1, 4}^{3, 2, 6}}& \srf{\det\L_{\;4, 1, 3}^{5, 2, 6}}& \srf{\det\L_{\;4, 5, 3}^{2, 1, 6}}& \srf{\det\L_{\;3, 5, 2}^{4, 1, 6}}& \srf{\det\L_{\;3, 4, 2}^{1, 5, 6}}& \srf{\det\L_{\;2, 4, 1}^{3, 5, 6}}& \srf{\det\L_{\;2, 3, 1}^{5, 4, 6}}\\
\ddddot{\Zv}[{}_1^2|{}_3^4|{}_5^6]& \zeta \!+\!1& \zeta \!+\!1& \zeta ^2\!+\!\zeta & 2 \zeta & \zeta \!+\!1& \zeta ^2\!+\!\zeta & 2 \zeta & 2 \zeta & \zeta \!+\!1& 2\\[2pt]
\ddddot{\Zv}[{}_1^4|{}_2^3|{}_5^6]& \zeta & 1& \zeta & \zeta & \zeta & \zeta ^2& \zeta & \zeta & 1& 1\\[2pt]
\ddddot{\Zv}[{}_5^1|{}_2^3|{}_4^6]& \zeta \!+\!1& 2& \zeta \!+\!1& \zeta \!+\!1& \zeta \!+\!1& 2 \zeta & \zeta \!+\!1& \zeta \!+\!1& 2& 2\\[2pt]
\ddddot{\Zv}[{}_5^3|{}_1^2|{}_4^6]& 1& 1& \zeta & 1& 1& \zeta & \zeta & \zeta & 1& 1\\[2pt]
\ddddot{\Zv}[{}_4^5|{}_1^2|{}_3^6]& 2 \zeta & 2 \zeta & \zeta ^2\!+\!\zeta & 2 \zeta & \zeta \!+\!1& \zeta ^2\!+\!\zeta & \zeta ^2\!+\!\zeta & 2 \zeta & \zeta \!+\!1& \zeta \!+\!1\\[2pt]
\ddddot{\Zv}[{}_4^2|{}_5^1|{}_3^6]& 1& 1& 1& 1& 1& 1& 1& 1& 1& 1\\[2pt]
\ddddot{\Zv}[{}_3^4|{}_5^1|{}_2^6]& \zeta \!+\!1& \zeta \!+\!1& 2 \zeta & 2 \zeta & \zeta \!+\!1& 2 \zeta & \zeta \!+\!1& \zeta \!+\!1& \zeta \!+\!1& 2\\[2pt]
\ddddot{\Zv}[{}_3^1|{}_4^5|{}_2^6]& \zeta & \zeta & \zeta & \zeta & 1& \zeta & \zeta & 1& 1& 1\\[2pt]
\ddddot{\Zv}[{}_2^3|{}_4^5|{}_1^6]& \zeta ^2\!+\!\zeta & 2 \zeta & \zeta ^2\!+\!\zeta & \zeta ^2\!+\!\zeta & 2 \zeta & 2 \zeta ^2& \zeta ^2\!+\!\zeta & 2 \zeta & \zeta \!+\!1& \zeta \!+\!1\\[2pt]
\ddddot{\Zv}[{}_2^5|{}_3^4|{}_1^6]& \zeta & \zeta & \zeta ^2& \zeta ^2& \zeta & \zeta ^2& \zeta & \zeta & \zeta & 1\\[2pt]
}
\times\frac{1}{(1-\zeta)^{2}}\end{gathered}\]
}

Notice that in the above examples, each entry of the inverse matrix
$\A_n^{-1}$ is a polynomial in $\zeta$ with non-negative integer
coefficients and degree at most $n/2-1$, divided by
$(1-\zeta)^{n/2-1}$.  Notice also that these polynomials, when evaluated at $\zeta=1$,
depend only on the row, not upon the column.  In fact, these observations hold for general $n$,
and are a consequence of an explicit combinatorial expression for the inverse
annular matrix $\A_n^{-1}$ that we provide in Appendix~\ref{inverse}.

For example,
row $\ddddot{\Zv}[{}_1^4|{}_2^3|{}_5^6]$ of $\A_6^{-1}$ (the second row given above) tells us
\begin{equation}
\frac{\Zv[{}_1^4|{}_2^3|{}_5^6]}{\Zv[1|2|3|4|5|6]} =
\frac{1}{(1-\zeta)^2}\left\{\begin{gathered}
\zeta \det\L_{\;1, 3, 5}^{2, 4, 6}
+ \det\L_{\;1, 2, 5}^{4, 3, 6}
+\zeta \det\L_{\;5, 2, 4}^{1, 3, 6}
+ \zeta \det\L_{\;5, 1, 4}^{3, 2, 6}\\
+ \zeta\det\L_{\;4, 1, 3}^{5, 2, 6}
+ \zeta^2\det\L_{\;4, 5, 3}^{2, 1, 6}
+ \zeta\det\L_{\;3, 5, 2}^{4, 1, 6}\\
+ \zeta \det\L_{\;3, 4, 2}^{1, 5, 6}
+ \det\L_{\;2, 4, 1}^{3, 5, 6}
+ \det\L_{\;2, 3, 1}^{5, 4, 6}
\end{gathered}\right\}.
\end{equation}

We can expand out the $\L$-determinants into sums of products of the
$\L_{i,j}$, each of which depends on $\zeta$, where
$\L_{j,i}(\zeta)=\L_{i,j}(1/\zeta)$.  Since the denominator is
$(1-\zeta)^{n/2-1}$, to evaluate the limit $\zeta\to 1$, we can
differentiate the numerator and denominator $n/2-1$ times with respect
to $\zeta$ and then set $\zeta$ to $1$.
The denominator of course becomes $(-1)^{n/2-1}(n/2-1)!$.
The numerator will consist of monomials of degree $n/2$ in
the quantities
\[
 \left.\L_{i,j}\right|_{\zeta=1},\quad
 \left.\frac{\partial}{\partial\zeta}\L_{i,j}\right|_{\zeta=1},\quad
 \left.\frac{\partial^2}{\partial\zeta^2}\L_{i,j}\right|_{\zeta=1},\quad
 \dots,\quad
 \left.\frac{\partial^{n/2-1}}{\partial\zeta^{n/2-1}}\L_{i,j}\right|_{\zeta=1}.
\]
Surprisingly, in each case all the terms involving higher order derivatives
of $\L_{i,j}$ cancel upon setting $\zeta$ to~$1$.  The terms involving
the first derivative of $\L_{i,n}$ also cancel at $\zeta=1$.
We prove this cancellation in Appendix~\ref{no-higher-derivative}.
This cancellation is convenient,
since there are fewer quantities that we need to evaluate.
Recall that
\[L_{i,j}=\left.\L_{i,j}\right|_{z=1}=\left.\L_{i,j}\right|_{\zeta=1}.\]
Let us define
\[L'_{i,j}:=\left.\frac{\partial}{\partial z}\L_{i,j}\right|_{z=1}=
  2 \left.\frac{\partial}{\partial \zeta}\L_{i,j}\right|_{\zeta=1}.\]

\begin{theorem} \label{L''}
  For all positive even $n$ and pairings $\sigma$ of $\{1,\dots,n\}$,
  $Z_\sigma/Z_{1|2|\cdots|n}$ is a polynomial of
  degree $n/2$ in the quantities
  \[
  \{ L_{i,j}  : 1\leq i < j \leq n \}\quad\text{and}\quad
  \{ L'_{i,j} : 1\leq i < j \leq n-1 \}.
  \]
\end{theorem}
See \eqref{eq:3-1-pairing} for an example.
We prove this theorem in the appendix.
\begin{conjecture}
  The coefficients in the polynomials in \tref{L''} are all integers.
  (We have verified this for all $\sigma$ for all $n\leq 10$.)
\end{conjecture}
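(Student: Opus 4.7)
The plan is to prove integrality by combining the explicit inversion formula for $\A_n$ with a symmetrization argument exploiting the antisymmetry $L'_{j,i}=-L'_{i,j}$. Writing $\L_{i,j}(\zeta)=L_{i,j}+(\zeta-1)M_{i,j}+O((\zeta-1)^2)$ with $M_{i,j}=L'_{i,j}/2$, each occurrence of an $L'$ in a naive expansion a priori carries a factor of $1/2$, so some doubling mechanism must conspire to remove the halves. The worked $n=4$ computation leading to \eqref{Z(12|34)} already shows the mechanism in miniature: two contributions to the monomial $M_{1,2}L_{3,4}$ arise from different rows of the $(3,1)$-system (one from $\det\L_{\;1,3}^{2,4}$, another via $M_{2,1}=-M_{1,2}$ from $\det\L_{\;3,2}^{1,4}$), and they combine to $+2\,M_{1,2}L_{3,4}=L'_{1,2}L_{3,4}$.

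I would first set up the bookkeeping. Starting from $\ddddot{\Zv}[\sigma]=\lim_{\zeta\to 1}P_\sigma(\zeta)/(1-\zeta)^{n/2-1}$ with $P_\sigma(\zeta)=\sum_\tau c_{\sigma,\tau}(\zeta)\det\L_{R_\tau}^{S_\tau}(\zeta)$, I distribute the $(n/2-1)$-th Taylor coefficient via the generalized Leibniz rule between the polynomial $c_{\sigma,\tau}(\zeta)$ (the row-entry of $(1-\zeta)^{n/2-1}\A_n^{-1}$, which has integer coefficients) and the Leibniz expansion of $\det\L_{R_\tau}^{S_\tau}$. This produces a signed sum over terms labelled by a row $\tau$, a permutation $\pi$ from the Leibniz expansion of the determinant, and a choice at each matrix entry of $L$ or $M$. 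The claim established in the proof of \tref{L''} — that higher-order Taylor coefficients of $\L$ cancel upon setting $\zeta=1$ — means I need only handle these ``$L$ or $M$'' choices.

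The core step is a sign-reversing involution $\iota$ on this set of terms: pick the lexicographically smallest factor $M_{k,l}$ in the term and flip its indices to obtain $M_{l,k}=-M_{k,l}$; compensate by simultaneously swapping $k$ and $l$ between $R_\tau$ and $S_\tau$. This swap changes the row $\tau$ into a sibling row of $\A_n$, introduces a Leibniz row-swap sign, and matches the $\zeta^?$-weight carried by the new row (because, as illustrated by the rows of $\A_n$, reversing the orientation of a pair crossing the zipper exactly toggles a factor of $\zeta$). I would show that $\iota$ is fixed-point-free on the contributions to each nonzero $L^a(L')^b$ monomial, and that the two-element orbits combine with a net factor of exactly $2$ per $L'$. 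Iterating over all $b$ of the $M$-factors yields a clean $2^b$ that cancels the $(1/2)^b$ from $M_{i,j}=L'_{i,j}/2$, giving an integer coefficient.

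The principal obstacle is the delicate sign and $\zeta$-weight accounting needed to make $\iota$ well-defined and exact: I must show that the swapped row always exists in $\A_n$ with the right $\zeta^k$-entry, that the generalized-Leibniz distribution of the $(n/2-1)$ derivatives is compatible with pair-swapping, and that terms fixed by $\iota$ (if any occur) cancel rather than contributing half-integer coefficients. This seems to require the unproven companion conjectures about $\A_n^{-1}$ having nonnegative integer polynomial entries, which would presumably follow from the Dyck tiling formula conjectured after \tref{thm:nonsingular}. In parallel I would pursue a more structural alternative: find a combinatorial interpretation of the coefficient of $\prod L_{i,j}^{a_{i,j}}\prod(L'_{i,j})^{b_{i,j}}$ as a signed count of oriented groves (with each $L'$-factor recording an algebraic zipper-crossing), generalizing the grove interpretation of \cite{KW1}; integrality would then be automatic. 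The hardest part in either route is matching signs and multiplicities precisely enough to extract the factor of $2^b$.
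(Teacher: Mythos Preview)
The statement you are trying to prove is a \emph{Conjecture} in the paper, not a theorem; the paper offers no proof and merely reports verification for $n\leq 10$. There is therefore no ``paper's own proof'' against which to compare. Your submission is not a proof either: it is a research plan, and you say so yourself (``I would first\dots'', ``I would show that\dots'', ``In parallel I would pursue\dots'').

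The concrete gap is that your involution argument depends on properties of $(1-\zeta)^{n/2-1}\A_n^{-1}$ that you do not establish. Swapping $k$ and $l$ between $R_\tau$ and $S_\tau$ changes the \emph{column} index of $\A_n$, but the coefficients $c_{\sigma,\tau}(\zeta)$ you need to control are entries of $\A_n^{-1}$ indexed by \emph{rows}, and you have not argued that the sibling entries of $\A_n^{-1}$ match in the way your $2^b$ counting requires. You in fact concede this: ``This seems to require the unproven companion conjectures about $\A_n^{-1}$ having nonnegative integer polynomial entries.'' That companion statement is also an open conjecture in the paper, so your argument reduces one unproved conjecture to another. The further promise that $\iota$ is fixed-point-free and that the generalized Leibniz distribution is ``compatible with pair-swapping'' is never checked; these are exactly the places where sign and $\zeta$-weight bookkeeping can fail. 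As it stands, the proposal is a plausible heuristic for why the conjecture might be true, not a proof.
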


\subsection{Formulas using the Green's function}

\begin{theorem} \label{partial-pairing-greens}
  Let $\sigma$ be a partial pairing, in which the nodes $Q$ are in singleton parts, the nodes $T$ are internalized, and $n\notin Q\cup T$.
  In the above formulas expressing $\Zv[\sigma]/\Zv[1|2|\cdots|n]$ in terms of $\L$-determinants,
  we can replace each $(-1)^{|T|} \det\L_{R,T}^{S,T}$ with $\det\hat\Gv_{R,Q}^{S,Q}$, where $\hat\Gv_{i,j}=\Gv_{i,j}$ for $j<n$ and $\hat\Gv_{i,n}=1$,
  and the result will be a formula for $Z[\sigma]/Z[1,2,\dots,n]$.
\end{theorem}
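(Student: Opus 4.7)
My plan is to prove the theorem in three steps: reduce to complete pairings, establish a key matrix identity at $\zeta=1$, and then extend from $T=\emptyset$ to general~$T$. For the reduction, the paragraph immediately preceding the theorem already shows that for a partial pairing $\sigma$ with singletons $T$ (and $n\notin T$), the formula for $\Zv[\sigma]/\Zv[1|2|\cdots|n]$ is given by the \emph{same} $L$-polynomial as for the corresponding complete pairing on $\No\setminus T$, where $\L$ is now the response matrix of $\G$ for the full node set $\No$ and the $R,S$ indices in each $\det\L_R^S$ are drawn only from the paired nodes. It therefore suffices to show that after substituting $\det\L_R^S\to\det\hat\Gv_{R\cup T}^{S\cup T}$ in this polynomial, the limit as $\zeta\to1$ is $Z[\sigma]/Z[1,2,\ldots,n]$.

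For the complete-pairing case ($T=\emptyset$), I aim to prove the identity
\[
\det\hat\Gv_R^S \;=\; \det\L_R^S\,\cdot\,\frac{Z[1|2|\cdots|n]}{Z[1,2,\ldots,n]}\qquad\text{at }\zeta=1
\]
for every pairing-determinant ($R\sqcup S=\{1,\ldots,n\}$, $n\in S$, $|R|=|S|=n/2$). Expanding $\det\hat\Gv_R^S$ along its $n$-th column (which is all~$1$'s) yields a signed sum of $\Gv$-minors $\det\Gv_{R\setminus\{i\}}^{S\setminus\{n\}}$ indexed by subsets of $\{1,\ldots,n-1\}$. Each such $\Gv$-minor can be converted to an $\L$-minor via Jacobi's complementary-minor identity applied to the relation $[\L_{i,j}]_{i,j<n}^{-1}=-\Gv|_{\No\setminus\{n\}}$ from~\eqref{LG}; then, using $\L\mathbf{1}=0$ at $\zeta=1$ to substitute $\L_{i,n}=-\sum_{j<n}\L_{i,j}$, the pieces reassemble into $\det\L_R^S$ times a common scalar $(-1)^{n-1}/\det[\L_{i,j}]_{i,j<n}$, which by \tref{Lprincminors} equals $Z[1|2|\cdots|n]/Z[1,2,\ldots,n]$. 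Since the $L$-polynomial is \emph{linear} in the $\det\L_{R_i}^{S_i}$, the substitution simply rescales the limiting value by this constant, converting $Z[\sigma]/Z[1|2|\cdots|n]$ into $Z[\sigma]/Z[1,2,\ldots,n]$.

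For $T\neq\emptyset$, I would apply the Schur complement identity
\[
\det\hat\Gv_{R\cup T}^{S\cup T} \;=\; \det\Gv_T^T\cdot\det\bigl(\hat\Gv_R^S-\hat\Gv_R^T(\Gv_T^T)^{-1}\hat\Gv_T^S\bigr),
\]
and identify the Schur complement in the second factor as the extended Green's function $\hat\Gv'$ of the modified surface graph $\G'$ obtained by adding the $T$-vertices to the boundary alongside~$n$. Applying the $T=\emptyset$ identity on $\G'$ (on which $\sigma$ restricts to a complete pairing) and matching the CRG weighted sums on $\G$ versus on $\G'$ via \tref{LABC} yields the desired identity for $T\neq\emptyset$.

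The hard part is this last step: making the Schur-complement/modified-graph identification precise, and verifying that the factor $\det\Gv_T^T$ combines correctly with the change of denominator from $Z[1|2|\cdots|n]$ to $Z[1,2,\ldots,n]$ under the response-matrix-to-Green's-function conversion. I would verify small cases such as $|T|=1$ explicitly to pin down sign conventions, and then argue the general case by induction on~$|T|$.
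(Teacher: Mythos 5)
There is a genuine gap in your $T=\emptyset$ step, and your general-$T$ step is left as an unproven sketch. The formulas into which the theorem says the substitution is made are not evaluations at $\zeta=1$: they have the form $\Zv[\sigma]/\Zv[1|2|\cdots|n]=\lim_{\zeta\to1}(1-\zeta)^{-m}\sum_{R,S}\alpha_{\sigma,R,S}(\zeta)\det\L_R^S(\zeta)$, and at $\zeta=1$ the numerator itself vanishes to order $m$ (that is why l'H\^opital is invoked). Consequently an identity $\det\hat\Gv_R^S=\det\L_R^S\cdot Z[1|2|\cdots|n]/Z[1,2,\dots,n]$ established only \emph{at} $\zeta=1$ cannot justify replacing $\det\L_R^S(\zeta)$ by $\det\hat\Gv_{R\cup T}^{S\cup T}(\zeta)$ inside the limit: one needs agreement as functions of $z$ up to order $(z-1)^m$ (up to a factor with a finite nonzero limit). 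The obstruction to upgrading your identity is precisely the step you pass over, namely that $\L(z)\mathbf{1}=0$ fails for $z\neq1$, so your substitution $\L_{i,n}=-\sum_{j<n}\L_{i,j}$ is only available at $z=1$. The paper removes this obstruction first: because $L'_{i,n}$ and all higher derivatives of the $\L_{i,j}$ cancel from the limiting formulas (\tref{L''} and its appendix proof), one may redefine $\L_{i,n}(z):=-\sum_{j<n}\L_{i,j}(z)$ without changing any limit, and only then run the Jacobi/cofactor computation identically in $z$, using \eqref{LG} and \tref{Lprincminors}, to get $\det\L_R^S/\det(-\L_{1,\dots,n-1}^{1,\dots,n-1})=\det\hat\Gv_{R,T}^{S,T}$, whence the theorem follows since $\det(-\L_{1,\dots,n-1}^{1,\dots,n-1})\to Z[1,2,\dots,n]/Z[1|2|\cdots|n]$. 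Your mechanics at $\zeta=1$ (cofactor expansion along the all-ones column plus Jacobi) do echo the paper's computation, but without invoking the cancellation of the $\L_{i,n}$-derivatives the argument does not control the limit.

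On general $T$, your reduction via the Schur complement and a modified graph with Dirichlet conditions at $T$ is only a plan, and the identifications it requires are not clean as stated: the $n$-th column of $\hat\Gv_{R\cup T}^{S\cup T}$ is all ones, but after taking the Schur complement with respect to the $\Gv_T^T$ block that column becomes $\mathbf{1}-\Gv_R^T(\Gv_T^T)^{-1}\mathbf{1}$, so the second factor is not literally $\det\hat\Gv'{}_R^S$ for the grounded graph, and the matching of the prefactor $\det\Gv_T^T$ against the change of normalization is exactly what you defer. The paper needs none of this: its single computation carries the singleton set $T$ along from the start, expanding $\det\hat\Gv_{R,T}^{S,T}$ along column $n$ and applying Jacobi's complementary-minor identity to each $\det\L_R^{S^*,i}$ with $i\in R\cup T$, so no induction on $|T|$ and no auxiliary graph appear. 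As written, your proposal establishes neither the needed identity as a function of $z$ nor the case $T\neq\emptyset$.
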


\begin{proof}
  Since the
higher derivatives of $L_{i,j}$ and the first derivative of
$L_{i,n}$ always cancel out, we may compute $\ddddot Z[\sigma]$
using any convenient choice of $L'_{i,n}$, and for present purposes it
is convenient to make the choice for which $\sum_j \L_{i,j} = 0$ for
each~$i$.  Then writing the sequence $S$ as $S=S^*,n$, where $n\notin S^*$,
we may express the determinant $\det\L_{R,T}^{S,T}$ as
\[ \det\L_{R,T}^{S,T} = -\sum_{i\notin S\cup T} \det\L_{R,T}^{S^*,i,T}. \]
Suppose for now that both $R,Q,T$ and $S^*,Q,T$ are in sorted order.
We use Jacobi's formula on each summand
and the fact that $\L_{1,\dots,n-1}^{1,\dots,n-1}$ and $\Gv_{1,\dots,n-1}^{1,\dots,n-1}$ are negative inverses
to obtain
\enlargethispage{12pt}
\begin{align*}
 \frac{\det\L_{R,T}^{S,T}}{\det\L_{1,\ldots,n-1}^{1,\ldots,n-1}}
  &= - \sum_{i\notin S\cup T} (-1)^{\sum R + \sum S^* + i + |\{s\in S^*:s>i\}|} \det\big(-\Gv_{\{1,\dots,n-1\}\setminus\{S^*,i,T\}}^{\{1,\dots,n-1\}\setminus R,T}\big).
\intertext{Since $\{1,\dots,n-1\}\setminus (R,T) = S^*,Q$ and $\{1,\dots,n-1\}\setminus \{S^*,i,T\} = R,Q\setminus\{i\}$,
}
 \frac{\det\L_{R,T}^{S,T}}{\det\L_{1,\ldots,n-1}^{1,\ldots,n-1}}
  &=  (-1)^{\sum R + \sum S^* + |R|+|Q|} \sum_{i\in R\cup Q} (-1)^{i + |\{s\in S^*:s>i\}|} \det \Gv_{R,Q\setminus\{i\}}^{S^*,Q}.
\end{align*}
When we expand $\det\hat\Gv_{R,Q}^{S^*,n,Q}$ along column $n$, we obtain
\begin{align*}
  \det\hat\Gv_{R,Q}^{S^*,n,Q} &= \sum_{j=1}^{|R\cup Q|} (-1)^{j+|R|}\det\Gv_{R,Q\text{\ with $j$th item removed}}^{S^*,Q}
\end{align*}
If the $j$th item of $R,Q$ is $i$, then $i-j=|\{s\in S^*:s<i\}|$, so
\begin{align*}
  \det\hat\Gv_{R,Q}^{S^*,n,Q} &= (-1)^{|R|+|S^*|} \sum_{i\in R\cup Q} (-1)^{i+|\{s\in S^*:s>i\}|}\det\Gv_{R,Q\setminus\{i\}}^{S^*,Q}
\intertext{and so}
 \frac{\det\L_{R,T}^{S,T}}{\det\L_{1,\ldots,n-1}^{1,\ldots,n-1}}
&= (-1)^{\sum R + \sum S^* + |R|+|Q| + |R| + |S^*|} \det\hat\Gv_{R,Q}^{S^*,n,Q}.
\end{align*}
Since $R\cup S^*=\{1,2,\dots,2|R|-1\}$, which adds up to $|R|$ modulo 2, we have
\begin{equation}
 \frac{(-1)^{|T|} \det\L_{R,T}^{S,T}}{\det-\L_{1,\ldots,n-1}^{1,\ldots,n-1}}
= \det\hat\Gv_{R,Q}^{S,Q}.
\end{equation}
Observe that if we relax the assumption that $R,Q,T$, and $S^*,Q,T$ are in sorted order, the
left- and right-hand sides of the above equation change signs the same number of times.
Hence this equation holds regardless of the relative order of
the indices in $R$, $S$, $Q$, and $T$.

Finally, recall that $\frac{\Zv[1,2,\dots,n]}{\Zv[1|2|\dots|n]}=\det-\L_{1,\ldots,n-1}^{1,\ldots,n-1}.$
\end{proof}

\begin{corollary} \label{L2G}
  For a complete pairing $\sigma$, the Green's function formula for
  $Z[\sigma]/Z[1,2,\dots,n]$ can be obtained from the response-matrix formula for
  $Z[\sigma]/Z[1|2|\cdots|n]$ simply by replacing each $L_{i,j}$ with
  $G_{i,j}$ and each $L'_{i,j}$ with $G'_{i,j}$, and then setting $G_{i,n}=1$.
\end{corollary}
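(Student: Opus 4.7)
The plan is to apply the preceding theorem with $T=\emptyset$ and then observe that the L'H\^opital manipulation producing the $L$-polynomial commutes with the substitution $\det\L\mapsto\det\hat\Gv$.

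Specializing the preceding theorem to $T=\emptyset$, the identity
\[
  \frac{\Zv[\sigma]}{\Zv[1|2|\cdots|n]} = \frac{1}{(1-\zeta)^{n/2-1}}\sum_k c_k(\zeta)\,\det\L_{R_k}^{S_k}(\zeta),
\]
with coefficients $c_k(\zeta)$ read from $\A_n^{-1}$, becomes the analogous presentation of $\Zv[\sigma]/\Zv[1,2,\ldots,n]$ upon replacing each $\det\L_{R_k}^{S_k}$ by $\det\hat\Gv_{R_k}^{S_k}$. Note that in every $\det\L_{R_k}^{S_k}$ arising from $\A_n^{-1}$ the index $n$ appears only as a column, never as a row, so only the entries $\hat\Gv_{i,n}$ (never $\hat\Gv_{n,j}$) are overwritten with $1$. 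Taking $\zeta\to 1$, the left sides tend to $Z[\sigma]/Z[1|2|\cdots|n]$ and $Z[\sigma]/Z[1,2,\ldots,n]$ respectively, and \tref{L''} identifies the first limit with a polynomial $P(L_{i,j},L'_{i,j})$.

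The core step is to argue that the second limit equals the \emph{same} polynomial $P$, now evaluated at $\hat G_{i,j}$ and $\hat G'_{i,j}$. The L'H\^opital computation---expanding each determinant as a signed sum of products of entries, differentiating the numerator $n/2-1$ times in $\zeta$, then setting $\zeta=1$---is a purely algebraic procedure whose output is a universal polynomial in the Taylor coefficients of the matrix entries at $\zeta=1$, with coefficients depending only on the combinatorial data $(n,\sigma,\{c_k\})$ and not on the matrix being fed in. The cancellation of all higher-derivative contributions asserted by \tref{L''} is therefore a formal identity in these Taylor coefficients, and transfers verbatim from $\L$ to $\hat\Gv$. Since $\hat\Gv_{i,n}(\zeta)\equiv 1$ gives $\hat G_{i,n}=1$ and $\hat G'_{i,n}=0$ (consistent with the absence of $L'_{i,n}$ from $P$), while $\hat G_{i,j}=G_{i,j}$ and $\hat G'_{i,j}=G'_{i,j}$ for $j<n$, the substitution reduces precisely to $L_{i,j}\mapsto G_{i,j}$, $L'_{i,j}\mapsto G'_{i,j}$, together with $G_{i,n}=1$, as claimed.

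The main obstacle is justifying the ``formal identity'' claim. If the appendix proof of \tref{L''} invokes special properties of $\L$ (such as the symmetry $\L_{i,j}(1/\zeta)=\L_{j,i}(\zeta)$), one must verify that the arguments use those properties only on entries $\L_{i,j}$ with $j<n$, so that the corresponding statements for $\hat\Gv$---which agrees with $\Gv$ exactly on such entries---are also available. Because $n$ never appears as a row index in the determinants coming from $\A_n^{-1}$, this check should be routine, but it is the delicate point on which the commutativity of the two procedures ultimately rests.
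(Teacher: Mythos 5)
Your proposal is correct and follows essentially the route the paper intends: the corollary is the specialization $T=\emptyset$ of the preceding theorem, combined with the observation that the l'H\^opital limit and the cancellation of higher derivatives apply verbatim with $\hat\Gv$ in place of $\L$. The ``formal identity'' point you flag as delicate is indeed available, since the cancellation underlying \tref{L''} is established in the appendix as an identity about $\A_n^{-1}$ alone (equation \eqref{dalpha}), valid for arbitrary matrix-valued inputs and not relying on special properties of $\L$, and the determinants coming from $\A_n^{-1}$ have $n$ only as a column index, just as you note.
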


\begin{corollary} \label{global-const}
  For a partial pairing $\sigma$ in which node $n$ is paired, the
  Green's function formulas for $Z[\sigma]/Z[1,2,\dots,n]$ are invariant under the
  addition of global constant to the Green's function.
\end{corollary}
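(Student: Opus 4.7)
The plan is to establish invariance at the level of each basic determinant $\det\hat\Gv_{R\cup T}^{S\cup T}$ that appears in the Green's function formula from the preceding theorem; since that formula is obtained from a linear combination of such determinants (with $z$-derivatives supplied by l'H\^opital's rule applied to the $(1-\zeta)^{n/2-1}$ denominator), invariance of the pieces will imply invariance of the whole.

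By \tref{LABC}, every $\det\L_R^S$ arising in the underlying $\L$-formula has $R\cup S=\{1,\ldots,n\}\setminus T$, and because $\sigma$ pairs $n$, we may order each $R,S$ so that $n\in S$. Under the convention $\hat\Gv_{i,n}=1$, the corresponding matrix $\hat\Gv_{R\cup T}^{S\cup T}$ then has a column (indexed by $n$) whose entries are all equal to $1$, independent of $z$.

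Adding a global constant $c$ to the Green's function means $\Gv_{i,j}\to\Gv_{i,j}+c$ for $i,j<n$, while $\hat\Gv_{i,n}=1$ and the derivatives $\Gv'_{i,j},\Gv''_{i,j},\ldots$ are unchanged. At the matrix level this is the rank-one perturbation $\hat\Gv(z)\to\hat\Gv(z)+c\,\mathbf{1}\mathbf{e}^T$, where $\mathbf{e}$ is the indicator of $(S\cup T)\setminus\{n\}$. The matrix determinant lemma gives
\begin{equation*}
 \det\bigl(\hat\Gv(z)+c\,\mathbf{1}\mathbf{e}^T\bigr)
 =\det\hat\Gv(z)+c\,\mathbf{e}^T\operatorname{adj}\bigl(\hat\Gv(z)\bigr)\mathbf{1}.
\end{equation*}
The $j$-th entry of $\operatorname{adj}(\hat\Gv(z))\mathbf{1}$ equals the determinant of $\hat\Gv(z)$ with its $j$-th column replaced by the all-ones vector. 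For $j\ne n$ this determinant vanishes because column $j$ would then duplicate the existing all-ones column $n$; for $j=n$ the entry is multiplied by $\mathbf{e}_n=0$. The correction term is therefore identically zero, so $\det(\hat\Gv(z)+c\,\mathbf{1}\mathbf{e}^T)=\det\hat\Gv(z)$ as an identity in $z$.

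Differentiating this identity in $z$ and evaluating at $z=1$, each quantity $\partial_z^k\det\hat\Gv_{R\cup T}^{S\cup T}|_{z=1}$ is invariant under the shift, which yields the corollary. The one step requiring care is the choice $n\in S$ in every term, which uses precisely the hypothesis that $n$ is paired in $\sigma$; had $n$ been a singleton, column $n$ of $\hat\Gv$ would not appear in every determinant and the two-identical-columns cancellation would collapse.
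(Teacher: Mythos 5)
Your proposal is correct and is essentially the paper's own argument: the proof in the paper is the one-line observation that the column of $\hat\Gv_{R,T}^{S,T}$ indexed by $n$ is all-ones, so adding a constant to the other columns (a multiple of that column) leaves each determinant, and hence the extracted formula, unchanged. Your matrix-determinant-lemma computation simply spells out this same column-operation cancellation in more detail.
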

\begin{proof}
  The column indexed by $n$ in $\det\hat\Gv_{R,Q}^{S,Q}$ is all-ones.
\end{proof}

\subsection{Windings}

We can also extract information about the windings of the paths within
a grove pairing in a manner similar to that described in the $(1,1)$
case.  For a given directed pairing $\sigma$, we have
\[\Zv[\sigma] = \sum_k z^k Z[\sigma,(k)],\]
where $Z[\sigma,(k)]$ is the weighted sum of groves of type $\sigma$ in which
the algebraic number of zipper crossings (involving all pairs in $\sigma$) is $k$.  Then
  \[ \E\left[\parbox{\widthof{algebraic number of zipper crossings}}{algebraic number of zipper crossings\\ for groves of type $\sigma$}\right]
  = \lim_{z\to 1}\frac{\partial}{\partial z}\log\frac{\Zv[\sigma]}{\Zv[1|2|\cdots|n]}.\]

\section{The Green's function and its monodromy-derivative}
\label{Greens}

To carry out our loop-erased random walk computations for various
lattices, we will use our formulas for the connection probabilities in
annular-one graphs developed in section~\ref{annular}, and for this we
need the Green's function $G$ together with its derivative $G'$ with
respect to a zipper monodromy.  We will need $G$ and $G'$ for both
the full lattice, and the lattice after some of its edges have been cut.

\subsection{Green's function and potential kernel} \label{sec:Gbar}

The Green's function $G_{u,v}$ is infinite for recurrent lattices such
as $\Z^2$, but there is a quantity known as the potential kernel
$A_{u,v}$ which behaves like a Green's function, except that
$A_{u,u}=0$, and $G_{u,v}$ and $A_{u,v}$ have the opposite sign
convention (see \cite{Spitzer}).  Suppose that a graph~$\G$ is the
intersection of $\Z^2$ or another lattice $\LL$ with a region
surrounding the origin, with ``wired boundary conditions'', i.e., all
the lattice vertices in $\LL$ that are not in the region are merged
into a single vertex in $\G$ that plays the role of boundary.  If $R$
denotes the electrical resistance within $\G$ from the origin to the
boundary, then
\[ G^{\G}_{u,v} = R - A^{\LL}_{u,v} + o(1),\]
where the error term tends to $0$ for fixed $u$ and $v$ as
$R\to\infty$.  For translation invariant lattices, $A^{\LL}_{u,v}$
depends only on $u-v$, and is written as $a^{\LL}_{u-v}$.

Since all of our formulas for crossings of the annulus are invariant
when a global constant is added to the Green's function (involving
terms such as $G_{1,2}-G_{1,3}$), it is straightforward to take the
limit $\lim_{\G\to\LL}$ of these formulas by replacing each
$G^\G_{u,v}$ in the formula with $-A^{\LL}_{u,v}$, which we shall also denote
by $\bar G^\LL_{u,v}$.

For convenience let us work with a modified finite graph $\bar\G$ approximating the
lattice $\LL$, obtained
by adjoining an edge with conductance $-1/R$ to node $n$
of $\G$, defining node $n$ of $\bar\G$ to be the other endpoint of
this edge.  This has the effect of making the resistance in $\bar\G$ from the origin
to node $n$ exactly zero.

For any partition $\sigma$ for which $n$ is not in a singleton
part, we have $Z^{\bar\G}=-Z/R$ and $Z^{\bar\G}[\sigma]=-Z[\sigma]/R$,
so in particular we can compute $Z[\sigma]/Z = Z^{\bar\G}[\sigma]/Z^{\bar\G}$
by working with the Green's function $\Gv^{\bar\G}$ of this modified graph.
We define $\bar G_{u,v}$ and $\bar G'_{u,v}$ by the expansion
\[\Gv^{\bar\G}_{u,v} = \bar G_{u,v} + (z-1) \bar G'_{u,v} + O((z-1)^2).\]
Then in the limit $\G\to\LL$, we have $\bar G_{u,v} \to -A^{\LL}_{u,v}$.

It is well-known how to compute the potential kernel on periodic
lattices by taking the Fourier coefficients of the characteristic
polynomial of the lattice \cite{Spitzer}.  The potential kernel can
also be computed for any ``isoradial'' graph by doing local
computations \cite{Kenyon.isoradial}.  The square, triangular, and
honeycomb lattices are both periodic and isoradial, so for these
lattices either method can be employed.
For the squre lattice the potential kernel takes values in $\Q+\frac1{\pi}\Q$,
while for the triangular and honeycomb lattices it takes values
in $\Q+\frac{\sqrt{3}}{\pi}\Q$.

We shall make use of the following smoothness result:
\begin{lemma}[\cite{Stohr}]\label{Stohrthm}
For points $z=(z_1,z_2)$ far from $(0,0)$, the potential kernel on $\Z^2$
behaves like
\[ A^{\Z^2}_{0,z} = \frac{1}{2\pi} \log|z| + \frac{\frac32\log 2 + \gamma}{2\pi} + O(1/|z|^2),\]
where $\gamma$ is Euler's constant.
\end{lemma}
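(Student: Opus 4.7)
The plan is to work with the Fourier representation
\begin{equation*}
A^{\Z^2}_{0,z} \;=\; \int_{[-\pi,\pi]^2} \frac{1-\cos(z\cdot\theta)}{P(\theta)}\, \frac{d\theta_1\, d\theta_2}{(2\pi)^2}, \qquad P(\theta) \;:=\; 4 - 2\cos\theta_1 - 2\cos\theta_2,
\end{equation*}
which follows from characterizing $A$ as the unique function on $\Z^2$ satisfying $\Delta A = -\delta_0$, $A_{0,0} = 0$, and at-most-logarithmic growth; the identity $\Delta_z \cos(z\cdot\theta) = P(\theta)\cos(z\cdot\theta)$ makes the verification immediate. Near the origin the Taylor expansion $P(\theta) = |\theta|^2 - (\theta_1^4+\theta_2^4)/12 + O(|\theta|^6)$ shows that the singularity at $\theta=0$ is integrable once paired with the quadratic zero of $1-\cos(z\cdot\theta)$.

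To extract the leading asymptotic I would compare $1/P(\theta)$ with the continuum symbol $1/|\theta|^2$. Writing $1/P(\theta) = 1/|\theta|^2 + R(\theta)$ with $R$ bounded on $[-\pi,\pi]^2$, the continuum piece, after rescaling $\theta = \xi/|z|$ and passing to polar coordinates, yields $\frac{1}{2\pi}\log|z|$ plus an explicit constant (interpreting $\int_0^\infty (1-J_0(u))/u\, du$ via a cutoff). The remainder $R$ paired against $1-\cos(z\cdot\theta)$ contributes a further constant (from the ``$1$'' part) and a cosine contribution that decays by Riemann--Lebesgue; tracking the next-order Taylor coefficient $(\theta_1^4+\theta_2^4)/(12|\theta|^4)$ in $R$ and rescaling shows that this correction is $O(|z|^{-2})$, while a parity argument rules out any intermediate $O(|z|^{-1})$ contribution.

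The main obstacle is pinning down the exact constant $\tfrac{\frac{3}{2}\log 2 + \gamma}{2\pi}$, since several residual constants from the Fourier manipulation must combine cleanly. Rather than track each separately, I would switch to the heat-kernel representation
\begin{equation*}
A^{\Z^2}_{0,z} \;=\; \int_0^\infty \bigl[p_t(0,0) - p_t(0,z)\bigr]\, dt, \qquad p_t(0,(m,n)) \;=\; e^{-4t}\, I_m(2t)\, I_n(2t),
\end{equation*}
together with the uniform large-$t$ expansion $e^{-2t}I_n(2t) = (4\pi t)^{-1/2}e^{-n^2/(4t)}\bigl(1 + O(1/t)\bigr)$ valid for $|n| \le C\sqrt{t}$. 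Splitting the $t$-integral at $t=1$ and $t=|z|^2$, the middle range produces $\frac{1}{2\pi}\log|z|$ by matching $e^{-4t}I_0(2t)^2 \sim 1/(4\pi t)$ to its continuum analogue; the small-$t$ range produces the constant $\gamma$ via Euler's identity $\gamma = -\int_0^\infty e^{-t}\log t\, dt$ applied to the $t\to 0$ behaviour of $I_0$; and the large-$t$ range produces the residual $\frac{3}{2}\log 2$ from the Frullani-type integral $\int_0^\infty (1-e^{-|z|^2/(4t)})/(4\pi t)\,dt$ after the logarithmic part is subtracted. This matching of Bessel-function asymptotics with Euler's integral is the delicate step, and is essentially Stöhr's original calculation; the $O(|z|^{-2})$ error follows from the $O(1/t)$ term in the Bessel expansion together with the parity observation above.
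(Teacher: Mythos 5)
There is nothing in the paper to compare your argument against: the paper does not prove this lemma at all, but quotes it from the cited references (St\"ohr's classical computation and Kenyon's isoradial paper), using it only as an input for the convergence estimates in \lref{sum}. So the only question is whether your sketch is a sound route to the stated expansion. Structurally it is, and your normalizations are consistent: with $P(\theta)=4-2\cos\theta_1-2\cos\theta_2$ your Fourier representation gives $A^{\Z^2}_{0,(1,0)}=1/4$ and $A^{\Z^2}_{0,(1,1)}=1/\pi$, matching the values used in the paper, and the target constant is indeed Spitzer's $(2\gamma+\log 8)/\pi$ divided by $4$, i.e.\ $(\gamma+\tfrac32\log 2)/(2\pi)$, so you are proving the right statement for the right (graph-Laplacian) normalization.

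That said, the two points that constitute the actual content of the lemma are only gestured at. First, Riemann--Lebesgue applied to $\int R(\theta)\cos(z\cdot\theta)\,d\theta$ gives only $o(1)$; to get the claimed $O(|z|^{-2})$ you must exploit that $R$ is smooth away from $0$ and has an expansion at $0$ into even homogeneous terms, with the degree-zero part $(\theta_1^4+\theta_2^4)/(12|\theta|^4)$ split into its angular mean (harmless) and a mean-zero homogeneous symbol whose Fourier transform decays like $|z|^{-2}$; this requires an integration-by-parts or stationary-phase argument, not just a rescaling, and the parity argument only excludes odd powers once such an expansion is in place. Second, the heat-kernel bookkeeping that is supposed to produce exactly $\gamma+\tfrac32\log 2$ is asserted rather than executed, and this is precisely the delicate matching you yourself flag as ``essentially St\"ohr's calculation.'' A cleaner way to finish, once the Fourier argument has established $A^{\Z^2}_{0,z}=\frac{1}{2\pi}\log|z|+C+O(|z|^{-2})$ for \emph{some} constant $C$, is to evaluate $C$ from the exact diagonal values $A^{\Z^2}_{0,(n,n)}=\frac1\pi\sum_{k=1}^n\frac{1}{2k-1}$ (McCrea--Whipple/Spitzer): since $\sum_{k=1}^n\frac1{2k-1}=\tfrac12\log n+\log 2+\tfrac\gamma2+o(1)$ and $|(n,n)|=n\sqrt2$, one gets $C=(\gamma+\tfrac32\log2)/(2\pi)$ without any Bessel asymptotics. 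As written, your proposal is a correct plan in the spirit of the cited sources, but the constant evaluation and the $O(|z|^{-2})$ error bound still need to be carried out in full.
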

The asymptotics of the Green's function has also been studied on
other vertex-transitive 2-dimensional periodic lattices
\cite{FU96} \cite{KS04}, and also on other isoradial
graphs \cite[Thm.~7.3]{Kenyon.isoradial} \cite[Thm.~A.2]{Bucking}.
In particular, for the triangular
lattice the potential kernel is asymptotically
$(\log|z|+\log\sqrt{12}+\gamma)/(2\pi\sqrt{3}) + O(1/|z|^2)$, and for
the honeycomb lattice it is $(\log|z|+\log 2+\gamma) \sqrt{3}/(2\pi) +
O(1/|z|^2)$.

\subsection{Derivative of the Green's function}
\label{Greens'}

\subsubsection{Infinite sum formula}

Let $S$ be the adjacency matrix of the zipper, i.e.,
\[S_{k,\ell}=\begin{cases} 1 &
\text{there is a zipper edge directed from $k$ to $\ell$} \\
0 & \text{otherwise}.\end{cases}\]
Then $\Delta(z)=\Delta_0 + (1-z^{-1})S + (1-z) S^*$, so
\begin{align}
\Delta(z)^{-1}
&= \left(\Delta_0(1 + (1-z^{-1})\Delta_0^{-1} S+(1-z)\Delta_0^{-1}S^*)\right)^{-1}\nonumber\\
&= \Delta_0^{-1} -(1-z^{-1}) \Delta_0^{-1} S \Delta_0^{-1} -(1-z) \Delta_0^{-1} S^* \Delta_0^{-1} + O((z-1)^2)\nonumber \\
\Gv_{u,v} &= G_{u,v} - (z-1) \sum_{\text{zipper edges }(k,\ell)} c_{k,\ell} (G_{u,k} G_{\ell,v}-G_{u,\ell} G_{k,v})+ O((z-1)^2)\nonumber
\end{align}
The sum is over zipper edges $(k,\ell)$ in which the zipper direction is from
$k$ to $\ell$, and $c_{k,\ell}$ is the conductance of edge $(k,\ell)$.
The linear term in $z-1$ gives us the desired derivative:
\begin{align}
G'_{u,v} = \left.\partial_z \Gv_{u,v}\right|_{z=1}&=  - \sum_{\text{zipper edges }(k,\ell)} c_{k,\ell} (G_{u,k} G_{\ell,v}-G_{u,\ell} G_{k,v}).
\end{align}
For the modified graph $\bar\G$, we of course have
\begin{align}
\bar G'_{u,v} = \left.\partial_z \Gv^{\bar\G}_{u,v}\right|_{z=1}&=  - \sum_{\text{zipper edges }(k,\ell)} c_{k,\ell} (\bar G_{u,k} \bar G_{\ell,v} - \bar G_{u,\ell} \bar G_{k,v}).
\label{ebar}
\end{align}

For a vertical zipper in $\Z^2$ (or the triangular lattice or
honeycomb lattice) started in the face whose lower-left corner is the
origin, directed down towards infinity, we define
\begin{equation} \label{lattice-sum}
  \bar G'{}^{\LL}_{u,v} = - \sum_{\text{zipper edges }(k,\ell)} c_{k,\ell} (\bar G^{\LL}_{u,k} \bar G^{\LL}_{\ell,v} - \bar G^{\LL}_{u,\ell} \bar G^{\LL}_{k,v}).
\end{equation}
For fixed $u$ and $v$, for zipper edges $(k,\ell)$ at a distance $r$
from the origin, it is straightforward to use the smoothness result in
\lref{Stohrthm} to show that edge $(k,\ell)$ contributes $O(r^{-2}\log r)$
to the sum~\ref{lattice-sum}, so this sum is absolutely convergent.

We would like to know that $\bar G'{}^{\G}_{u,v}$ converges to $\bar
G'{}^{\LL}_{u,v}$ as defined in \eqref{lattice-sum} for a sequence of
$\G$'s converging to $\LL$.  For our purposes in section~\ref{LERW}
when we analyze loop-erased random on the lattice, we do not need this
convergence of $\bar G'$ for every sequence of $\G$'s converging to
$\LL$, it will suffice to have convergence for some sequence of $\G$'s
tending to $\LL$.  Perhaps the easiest way to show this is to exploit
the reflection symmetry that each of the square, triangular, and
honeycomb lattices possess.

\begin{lemma}\label{sum}
  If $\LL$ is the square, triangular, or honeycomb lattice, and
  $L\in\N$, let $\G_L=[-L^3,L^3]\times [-L,L^3] \cap\LL$ be the
  off-center box surrounding the origin and zipper (as in
  \fref{figure1} except with the lower boundary much closer to the
  origin than the other boundaries), where the lower boundary of the
  box is aligned with an axis of reflection symmetry of the lattice
  $\LL$.  Let $u,v$ be fixed points in $\LL$.
  Then \[\lim_{L\to\infty} \bar G'{}^{\G_L}_{u,v} = \bar G'{}^{\LL}_{u,v}.\]
\end{lemma}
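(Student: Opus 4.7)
The identity \eqref{ebar} expresses $\bar G'{}^{\G_L}_{u,v}$ as a finite sum over the zipper in $\G_L$ (of length $\sim L$), and we wish to show it converges to the absolutely convergent lattice sum \eqref{lattice-sum} that defines $\bar G'{}^{\LL}_{u,v}$. The plan is a split-sum argument. Choose a cutoff $M=M(L)$ with $\log L\ll M(L)\ll\sqrt{L/\log L}$ (for instance $M(L)=L^{1/3}$) and decompose the zipper of $\G_L$ into a near part (edges at distance $\le M$ from the origin) and a far part (distance $>M$). For the near part, a quantitative refinement of \lref{Stohrthm} gives the rate
\[
\bar G^{\bar\G_L}_{x,y}=-A^\LL_{x,y}+O\bigl((|x|+|y|)/L\bigr)
\]
for $x,y$ with $|x|,|y|\le M$ (with the leading correction $A^\LL_{x,y^*}-R_L$ coming from the image across the bottom boundary). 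Consequently each near-part discrepancy $|f^{\G_L}_r-f^\LL_r|$ at distance $r\le M$ is $O(M\log M/L)$, and the total near-part discrepancy $O(M^2\log M/L)$ vanishes by the choice of $M$.

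For the far part, I would use the reflection symmetry of $\LL$ across the bottom boundary of $\G_L$, which is an axis of symmetry of the lattice by hypothesis. The method of images, applied to the symmetric doubling $\tilde\G_L\subset\LL$ of $\G_L$ across this axis, yields $G^{\G_L}_{u,v}=G^{\tilde\G_L}_{u,v}-G^{\tilde\G_L}_{u,v^*}$ where $v^*$ is the reflection of $v$; consequently the half-plane approximation $\bar G^{\bar\G_L}_{u,v}\approx A^\LL_{u,v^*}-A^\LL_{u,v}-R_L$ holds with $R_L=A^\LL_{u,u^*}\sim\log L$ and with corrections from the remaining (distance-$L^3$) boundaries that are negligible on the relevant length scales. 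Substituting this representation into the summand and applying the algebraic cancellation $ad-bc=2(F_2G_1-F_1G_2)$ (with $F,G$ suitable averages and differences of the four entries) that yielded the $O(\log r/r^2)$ decay in the pure-lattice case produces the uniform-in-$L$ bound $|f^{\G_L}_r|\le C\log(L)/r^2$. Summing over the far part gives a contribution of size $O(\log L/M)\to 0$, and the analogous lattice-tail bound $\bigl|\sum_{r>M}f^\LL_r\bigr|=O(\log M/M)\to 0$; together with the near-part estimate these three contributions combine to give the claimed limit.

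The main obstacle is the logarithmic loss — a factor $\log L$ in place of $\log r$ — in the far-part uniform bound. It arises because the half-plane potential $A^\LL_{u,v^*}-A^\LL_{u,v}$ is of order $\log L$ when the zipper edge is near the origin (far from the reflection axis), and it rules out a direct dominated-convergence argument. This is precisely what forces the split cutoff $M(L)$ to grow faster than $\log L$, while the near-part discrepancy rate $O(r\log r/L)$ forces $M(L)$ to grow slower than $\sqrt{L/\log L}$, and it is the existence of a window $\log L\ll M\ll\sqrt{L/\log L}$ of admissible cutoffs that makes the argument go through.
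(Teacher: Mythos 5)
Your argument is correct, and it rests on the same two ingredients as the paper's own proof: the reflection symmetry of $\LL$ across the lower boundary, used (via images) to approximate the finite-volume Green's function by $\bar G^{\G_L}_{p,q}=-A^\LL_{p,q}+A^\LL_{p^*,q}-A^\LL_{0^*,0}+O(L^{-2})$ together with \lref{Stohrthm}, and the antisymmetrization cancellation that makes a zipper edge at distance $r$ contribute $O(r^{-2}\log r)$. The difference is organizational. The paper makes a single per-edge comparison valid uniformly along the whole zipper: substituting the image formula into the summand of \eqref{ebar}, the per-edge discrepancy from \eqref{lattice-sum} splits into a reflected-edge term of size $O(L^{-2}\log L)$ and four cross terms that cancel down to $O(1/(rL))$, so summing over the $\sim L$ zipper edges gives $\bar G'{}^{\G_L}_{u,v}=\bar G'{}^{\LL}_{u,v}+O(L^{-1}\log L)$ with no cutoff at all. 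Your version trades this sharper bookkeeping for a near/far split at a scale $M(L)$: near the origin you use only the $O(M/L)$ discrepancy of the Green's functions themselves (cost $O(M^2\log M/L)$), and in the far range a uniform-in-$L$ bound $O(\log L/r^2)$ on the finite-volume summand (cost $O(\log L/M)$), forcing the window $\log L\ll M\ll\sqrt{L/\log L}$. This works, at the price of asserting (correctly, but without verification) that the quadratic-decay cancellation survives the image corrections; note also that the doubling identity $G^{\G_L}_{u,v}=G^{\tilde\G_L}_{u,v}-G^{\tilde\G_L}_{u,v^*}$ you invoke in the far range is exactly what justifies your near-range expansion as well, so it is cleaner to cite it in both regimes rather than attribute the near-range rate to a refinement of \lref{Stohrthm} alone. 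Finally, the logarithmic loss you worry about does not actually occur: keeping the $-A^\LL_{0^*,0}$ normalization inside the averaged factors, the ``average'' terms are $O(\log r)$ rather than $O(\log L)$ and the two ``difference'' terms mismatch only by $O(r^{-2}+L^{-2})$, so the far summand is $O(\log r/r^2+\log r/L^2)$ uniformly in $L$; this is precisely the observation that lets the paper dispense with the cutoff and obtain a quantitative rate.
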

\begin{proof}
  We can approximate $G^{\G_L}_{u,w}$ and $G^{\G_L}_{v,w}$ (for $w$
  within distance $L$ of the origin) using the Green's function of the
  lattice intersected with the upper-half plane.  More precisely,
  we approximate $G^{\G_L}_{p,q}$ by
  \[G^{\approx}_{p,q} := -A^\LL_{p,q} + A^\LL_{p^*,q},\]
  where $p^*=p-(0,2L)$ is the reflection of $p$ through the lower
  boundary of the box, and $A^\LL$ is the potential kernel of the
  lattice.  By construction $G^{\approx}_{p,q}$
  is zero for $q$ along the lower side of the box, and by the
  smoothness result from \lref{Stohrthm}, $G^\approx_{p,q}=O(1/L^2)$
  along the other three sides of the box.  Both $G^\approx_{p,q}$ and
  $G^{\G_L}_{p,q}$ are harmonic in both $p$ and $q$ within the box (except on
  the boundary), and $G^{\G_L}_{p,q}$ is zero along all four sides of the box.
  By the maximal principle for harmonic functions, for $p$ and $q$ within
  $\G_L$ we have
  \[|G^\approx_{p,q} - G^{\G_L}_{p,q}| = O(1/L^2),\]
  i.e., \[\bar G^{\G_L}_{p,q} = -A^\LL_{p,q} + A^\LL_{p^*,q} - A^\LL_{0^*,0} + O(1/L^2). \]

  Next we compare the contribution of a zipper edge $(k,\ell)$ to
  $\bar G'{}^{\G_L}_{u,v}$ and $\bar G'{}^{\LL}_{u,v}$:
\begin{align*}
  -(\bar G^{\G_L}_{u,k} \bar G^{\G_L}_{\ell,v} - \bar G^{\G_L}_{u,\ell} \bar G^{\G_L}_{k,v})
 =&
  -(-A^\LL_{u,k} + A^\LL_{u^*,k} - A^\LL_{0^*,0})(-A^\LL_{v,\ell} + A^\LL_{v^*,\ell} - A^\LL_{0^*,0})\\
& +(-A^\LL_{u,\ell} + A^\LL_{u^*,\ell} - A^\LL_{0^*,0})(-A^\LL_{v,k} + A^\LL_{v^*,k} - A^\LL_{0^*,0})\\& + O(L^{-2} \log L)\\
=&  -(\bar G^{\LL}_{u,k} \bar G^{\LL}_{\ell,v} - \bar G^{\LL}_{u,\ell} \bar G^{\LL}_{k,v}) \\
 &  -(\bar G^{\LL}_{u,k^*} \bar G^{\LL}_{\ell^*,v} - \bar G^{\LL}_{u,\ell^*} \bar G^{\LL}_{k^*,v}) \\
 &  + A^\LL_{u,k} A^\LL_{v^*,\ell} + A^\LL_{u^*,k} A^\LL_{v,\ell} - A^\LL_{u,\ell} A^\LL_{v^*,k} - A^\LL_{u^*,\ell} A^\LL_{v,k} \\
 &  + A^\LL_{0^*,0}\big[-A^\LL_{u,k} + A^\LL_{u^*,k} -A^\LL_{v,\ell} + A^\LL_{v^*,\ell} \\
 &   \phantom{+ A^\LL_{0^*,0}\big[} +A^\LL_{u,\ell} - A^\LL_{u^*,\ell} +A^\LL_{v,k} - A^\LL_{v^*,k}\big]\\
&+O(L^{-2}\log L)\\
=&  -(\bar G^{\LL}_{u,k} \bar G^{\LL}_{\ell,v} - \bar G^{\LL}_{u,\ell} \bar G^{\LL}_{k,v})
    -(\bar G^{\LL}_{u,k^*} \bar G^{\LL}_{\ell^*,v} - \bar G^{\LL}_{u,\ell^*} \bar G^{\LL}_{k^*,v}) \\
 &  + A^\LL_{u,k} (A^\LL_{v^*,\ell}-A^\LL_{0^*,0}) - (A^\LL_{u^*,\ell}-A^\LL_{0^*,0}) A^\LL_{v,k} \\
 &  - A^\LL_{u,\ell} (A^\LL_{v^*,k}-A^\LL_{0^*,0}) + (A^\LL_{u^*,k}-A^\LL_{0^*,0}) A^\LL_{v,\ell} \\
&+O(L^{-2}\log L)
\end{align*}
Recall that $u$ and $v$ are fixed, so they are within distance $O(1)$
of the origin.  The second term is $O(L^{-2}\log L)$.  If the zipper
edge $(k,\ell)$ is at distance $r$ from the origin, then the next
four terms largely cancel one another and add up to $O(1/(r L))$.
Upon summing over all zipper edges, we find $\bar G'{}^{\G_L}_{u,v} =
\bar G'{}^{\LL}_{u,v} + O(L^{-1} \log L)$.
\end{proof}

\subsubsection{Zipper deformations}

The next task we have is to evaluate in closed form the infinite sum
in \eqref{lattice-sum}.  This we can do for many lattices $\LL$, including
the square lattice, triangular lattice, and honeycomb lattice,
although it is not clear how to do this for arbitrary lattices.

We shall need to deform the path that the zipper takes.  In general
deforming the zipper while keeping its endpoints fixed has no effect
on $\bar G'_{u,v}$, unless the zipper is deformed across either $u$ or
$v$.  If the zipper is moved across $u$ in the direction of the arrow
on the zipper, then $\bar G'_{u,v}$ decreases by $\bar G_{u,v}$,
and similarly, moving the zipper across $v$ (in the direction of the
arrow) increases $\bar G'_{u,v}$ by $\bar G_{u,v}$.  We can also
move the endpoint of the zipper by adding a new zipper edge $(k,\ell)$
(or removing an old one) near the endpoint of the zipper, which of
course just adds (or removes) one term to the summations \eqref{ebar}
and \eqref{lattice-sum}.

\subsubsection{Closed-form evaluation of \texorpdfstring{$\bar G'$}{G'} on \texorpdfstring{$\Z^2$}{Z\texttwosuperior}}

Next we evaluate $\bar G'$ for $\Z^2$.  The first step is to rotate
the entire lattice $180^\circ$ about the terminal square of the
zipper.  The rotation of course preserves the lattice $\Z^2$, and maps
$u$ and $v$ to $(1,1)-u$ and $(1,1)-v$ respectively, but now the
zipper goes up to infinity rather than down to infinity.  Let $\bar
G'^\uparrow$ denote $\bar G'$ with the repositioned zipper.  We have
\[ \bar G'_{u,v} = \bar G'^\uparrow_{(1,1)-u,(1,1)-v} .\]

The next step is to deform the zipper so that it once again goes
downwards.  We can deform the initial segment of the zipper so that it
goes downwards, then circles around back up along a large-radius
circle, and then continues back up as before.  By \lref{sum},
the summands along the zipper starting with the large-radius
circle and the subsequent path to infinity are negligible.
So we have
\[ \bar G'_{u,v} = \bar G'^\uparrow_{(1,1)-u,(1,1)-v}
= \bar G'_{(1,1)-u,(1,1)-v} + \parbox{\widthof{another term if zipper was}}{\setlength{\baselineskip}{2pt}another term if zipper was deformed across $u$ or~$v$.}\]

Next we move the location of the start of the zipper, translating it
by $v+u-(1,1)$, by adding a finite number of new zipper edges.  Then
we deform the zipper again, making it go straight down; we have to add
another term if the zipper gets deformed across either $(1,1)-u$ or
$(1,1)-v$.  Because the lattice $\Z^2$ is invariant under such
translations, translating the starting face of the zipper is
equivalent to translating the vertices in the opposite direction.
Thus we have
\[
\bar G'_{u,v} = \bar G'_{v,u} + \text{finite number of terms}.
\]
Finally we use the antisymmetry of $\bar G'_{u,v}$:
\[
\bar G'_{u,v} = \frac{\text{finite number of terms}}{2}.
\]

This procedure is perhaps better explained by way of an example.
We can write
\begin{align*}
  \bar G'_{(0,0),(2,1)}
  &= \bar G'^{\uparrow}_{(1,1),(-1,0)} \\
  &= \bar G'_{(1,1),(-1,0)} + \bar G_{(1,1),(-1,0)} \\
  &= \bar G'_{(2,1),(0,0)} + (\bar G_{(1,1),(0,0)} \bar G_{(0,1),(-1,0)} - \bar G_{(1,1),(0,1)} \bar G_{(0,0),(-1,0)}) + \bar G_{(1,1),(-1,0)} \\
  &= \frac{(\bar G_{(1,1),(0,0)} \bar G_{(0,1),(-1,0)} - \bar G_{(1,1),(0,1)} \bar G_{(0,0),(-1,0)}) + \bar G_{(1,1),(-1,0)}}{2}\\
  &= \frac{1}{2\pi^2} +\frac{1}{\pi} - \frac{5}{32}
\end{align*}

In like manner we can compute $\bar G'_{u,v}$ for any pair of vertices
$u$ and $v$ in $\Z^2$.  The answer will always be in $\Q +
\frac{1}{\pi}\Q + \frac{1}{\pi^2} \Q$.

\subsubsection{Closed-form evaluation of \texorpdfstring{$\bar G'$}{G'} on the triangular lattice}

We can compute $\bar G'$ on the triangular lattice in essentially the
same manner as for $\Z^2$.  The key properties of the lattice that we
used is that it is invariant under $180^\circ$ rotations, and that for
any pair of vertices there is a lattice-invariant translation that
maps the first vertex to the second vertex.

\subsubsection{Closed-form evaluation of \texorpdfstring{$\bar G'$}{G'} on the honeycomb lattice}

The honeycomb lattice is invariant under $180^\circ$ rotations and is
vertex-transitive.  However, there are not lattice-invariant
translations between any pair of vertices: we can partition the
vertices into two color classes, black and white, such that any
lattice-preserving translation will map the black vertices to the
black vertices and the white vertices to the white vertices.

Suppose $u$ is a black vertex and $v$ is a white vertex.  After a
$180^\circ$ rotation about a hexagon, $u$ is mapped to a white vertex
$u'$ and $v$ is mapped to a black vertex $v'$.  We can then translate
$u'$ to $v$ and $v'$ to $u$ while preserving the lattice.  This allows
us to compute $\bar G'_{u,v}$ when $u$ is black and $v$ is white (or
vice versa).

\enlargethispage{15pt}
Since $\bar G'_{u,v}$ is harmonic in both $u$ and $v$ (except along
the zipper), when $u$ and $v$ have the same color, $\bar G'_{u,v}$ can
be expressed as
\[\bar G'_{u,v} = \frac13 (\bar G'_{u,w_1} + \bar G'_{u,w_2} + \bar G'_{u,w_3})\]
(plus another term if one of the edges $(v,w_i)$ crosses the zipper),
where the $w_i$'s are the neighbors of $v$, and the right-hand side we
can compute by the above method.

\subsection{Cutting edges}

Suppose that in the vector bundle setting,
we know the Green's function $\Gv=\Gv^\G$ for a graph $\G$,
and we wish to know the Green's function for the graph
$\G\setminus\{s,t\}$ obtained by deleting an edge $\{s,t\}$ of $\G$.
Recall that $c_{s,t}$ denotes the conductance of edge $(s,t)$, and let
us denote by $\tau$ the parallel transport to $s$ from $t$, so that
$\Delta^\G_{s,t}=-c_{s,t}\tau$ and $\Delta^\G_{t,s}=-c_{s,t}\tau^*$.
Then it is readily checked that
\begin{align}
  \Gv^{\G\setminus\{s,t\}}_{u,v}
& = \Gv_{u,v} - \frac{(\Gv_{u,s}-\Gv_{u,t} \tau^*) (\Gv_{s,v} - \tau \Gv_{t,v})}{\alpha_{s,t}} \label{Ghat}
\intertext{where}
\alpha_{s,t} &= \Gv_{s,s} +\Gv_{t,t} -\Gv_{s,t}\tau^* - \tau\Gv_{t,s} - 1/c_{s,t}
\end{align}
(which is a scalar).
Indeed, if we let $f(u,v)$ denote the purported Green's
function on the right-hand side of \eqref{Ghat},
then $f(u,v)=0$ when either $u$ or $v$ is the boundary, and we have
\begin{align*}
\sum_v  f(u,v) \Delta^\G_{v,w}
& = \delta_{u,w} - \frac{(\Gv_{u,s}-\Gv_{u,t} \tau^*) (\delta_{s,w} - \tau \delta_{t,w})}{\alpha_{s,t}}.
\intertext{If $w\neq s$ and $w\neq t$ then $\Delta^{\G\setminus\{s,t\}}_{v,w}=\Delta^{\G}_{v,w}$, so}
\sum_v  f(u,v) \Delta^{\G\setminus\{s,t\}}_{v,w} &= \delta_{u,w}\quad\quad\text{(if $w\neq s$ and $w\neq t$).}
\end{align*}
Suppose now $w=s$.  Then
\begin{align*}
\sum_v  f(u,v) \Delta&^{\G\setminus\{s,t\}}_{v,s} =
\sum_v  f(u,v) \Delta^{\G}_{v,s} + f(u,t) c_{t,s} \tau^* - f(u,s) c_{t,s}\\
  &= \delta_{u,s} - \frac{\Gv_{u,s}-\Gv_{u,t}\tau^*}{\alpha_{s,t}}
 + \left[\Gv_{u,t} - \frac{(\Gv_{u,s}-\Gv_{u,t}\tau^*) (\Gv_{s,t} - \tau\Gv_{t,t})}{\alpha_{s,t}}\right]
   c_{t,s} \tau^* \\
& \phantom{= \delta_{u,s} - \frac{\Gv_{u,s}-\Gv_{u,t}\tau^*}{\alpha_{s,t}}}
 - \left[\Gv_{u,s} - \frac{(\Gv_{u,s}-\Gv_{u,t}\tau^*) (\Gv_{s,s} - \tau\Gv_{t,s})}{\alpha_{s,t}}\right] c_{t,s}\\
  &= \delta_{u,s} - \frac{\Gv_{u,s}-\Gv_{u,t}\tau^*}{\alpha_{s,t}} c_{t,s}
 \left[ \frac{1}{c_{t,s}} + \alpha_{s,t} + (\Gv_{s,t}\tau^* - \Gv_{t,t} - \Gv_{s,s} + \tau\Gv_{t,s}) \right]\\
&=\delta_{u,s}
\end{align*}
by the choice of $\alpha_{s,t}$.  The case $w=t$ is similar.

Let us return to the line bundle setting, with a zipper monodromy of $z$,
that we are interested in near $z=1$.  If $(s,t)$ is a zipper edge then
$\tau=z$ or $\tau=1/z$ (depending on the zipper direction), and otherwise
$\tau=1$.  Let $\tau'=\partial_z \tau|_{z=1}$.  Recall that $G=\Gv|_{z=1}$ and
$G'=\partial_z\Gv|_{z=1}$.  From \eqref{Ghat} it is evident that
\begin{align}
  G^{\G\setminus\{s,t\}}_{u,v}
& = G_{u,v} - \frac{(G_{u,s}-G_{u,t}) (G_{s,v} - G_{t,v})}{a_{s,t}} \label{Gcut}
\intertext{where}
a_{s,t} &= G_{s,s} +G_{t,t} - 2 G_{s,t} - 1/c_{s,t} \label{acut}.
\end{align}

We have
\begin{align*}
  \partial_z \alpha_{s,t} &= \partial_z \Gv_{s,s} + \partial_z\Gv_{t,t} -(\partial_z\Gv_{s,t})\tau^* -\Gv_{s,t}\partial_z\tau^* -(\partial_z\tau)\Gv_{t,s} -\tau\partial_z\Gv_{t,s}\\
  \left.\partial_z \alpha_{s,t}\right|_{z=1} &= 0.
\end{align*}
Using this, we can differentiate \eqref{Ghat}
with respect to the zipper monodromy $z$ and set $z=1$ to obtain
\begin{align}
\big(G^{\G\setminus\{s,t\}}_{u,v}\big)'
 = G'_{u,v}
&- (G'_{u,s}-G'_{u,t} + G_{u,t}\tau') (G_{s,v} - G_{t,v}) / a_{s,t} \label{Gcut'bar}\\
&+ (G_{u,s}-G_{u,t}) (G'_{s,v} -G'_{t,v} - \tau' G_{t,v}) / a_{s,t}. \notag
\end{align}

One final edge-cutting formula is
\begin{equation}  \frac{Z^{\G\setminus\{s,t\}}}{Z^{\G}} = 1-c_{s,t}(G_{s,s}+G_{t,t}-2 G_{s,t}).
 \label{Zcut}
\end{equation}
This holds because directed edge $(s,t)$ occurs in a uniform
spanning tree of $\G$ with probability $c_{s,t}(G_{s,s}-G_{s,t})$,
and likewise directed edge $(t,s)$ may occur in the spanning tree.

These formulas \eqref{Gcut}, \eqref{acut}, \eqref{Gcut'bar} and
\eqref{Zcut} of course apply to the modified graph~$\bar\G$ simply by
replacing $G$ and $G'$ with $\bar G$ and $\bar G'$.

\section{Loop-erased random walk}
\label{LERW}

In this section we show how to compute the probability that the LERW from $(0,0)$ to~$\infty$ in $\Z^2$
(or the triangular or honeycomb lattices) passes through any given vertex or edge.

\subsection{Preliminary remarks} \label{sec:LERW-prelim}

We let $P_{v,w}$ denote the probability that the LERW started from
$(0,0)$ to~$\infty$ passes through edge $(v,w)$, in the direction from
$v$ to $w$.  Likewise we let $P_w$ denote the probability that the LERW
passes through vertex~$w$.  It is straightforward that
 \[ P_w = \sum_{v:v\sim w} P_{w,v} = \sum_{v:v\sim w} P_{v,w} + \delta_{w,0}.\]
Our strategy is to compute these edge probabilities.

We find it conceptually convenient to work with finite graphs $\G$,
set up our equations for spanning tree and grove event probabilities
in terms of the finite-graph Green's function $G^\G$ and its
derivative $(G^\G)'$ using the formulas from section~\ref{annular},
and then afterwards take the limit $\G\to\LL$ using the formulas from
section~\ref{Greens}.  We are at liberty to use any convenient
sequence $\G_k$ of finite graphs that converge to the lattice $\LL$,
since the limiting measure on spanning trees of $\LL$ is independent
of the choice of $\G_k$, and the event that the LERW from $(0,0)$ to~$\infty$
uses a given edge is a measurable event in the limiting
measure.  So we choose a sequence $\G_k$ for which it is convenient to
compute $(G^{\G_k})'$, as described in section~\sref{Greens'}.  These
graphs $\G_k$ have a wired boundary vertex (see \fref{figure1}) that
we will label $\infty$, even though the graphs are finite.  The other
vertices of $\G_k$ we will label by their coordinates in $\LL$.  We
define $P^\G_{v,w}$ and $P^\G_{w}$ in the same way as we defined
$P_{v,w}$ and $P_{w}$; for fixed $v$ and~$w$, $\lim_{\G\to\LL}
P^\G_{v,w} = P_{v,w}$.

Once an edge traversal probability $P^\LL_{v,w}$ has
been computed, finding the corresponding probability $P^\LL_{w,v}$ for
the reversed edge is straightforward:

\begin{lemma}[\cite{Kenyon.longrange}]\label{dimerdifference}
  $ P^\LL_{v,w} - P^\LL_{w,v} = c_{v,w} (\bar G_{0,v} - \bar G_{0,w}) $
\end{lemma}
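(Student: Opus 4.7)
The plan is to establish the identity first on a finite wired-boundary graph $\G$ (as described at the start of section~\ref{LERW}), and then pass to the limit $\G\to\LL$ using the asymptotics between $G^{\G}$ and the potential kernel developed in section~\ref{Greens}. Let $\omega$ denote simple random walk on $\G$ started at $0$ and stopped on hitting the boundary vertex $\infty$, so that $\mathrm{LE}(\omega)$ has the law of the LERW from $0$ to $\infty$.

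The main step will be a deterministic flow-conservation identity: for every walk $\omega$ from $0$ to $\infty$ and every directed edge $(v,w)$,
$$ N_{v,w}(\mathrm{LE}(\omega)) - N_{w,v}(\mathrm{LE}(\omega)) = N_{v,w}(\omega) - N_{w,v}(\omega), $$
where $N_{v,w}(\cdot)$ counts traversals of the directed edge $(v,w)$. The reason is that loop erasure excises closed sub-walks one at a time, and any closed walk crosses the cut $\{v\}\mid\{w\}$ the same number of times in each direction, contributing $0$ to each side. I would verify this carefully by induction on the number of excised loops; this is the one genuinely conceptual step.

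Taking expectations and using that $\mathrm{LE}(\omega)$ is self-avoiding (so $\E[N_{v,w}(\mathrm{LE}(\omega))] = P^{\G}_{v,w}$), the left side becomes $P^{\G}_{v,w}-P^{\G}_{w,v}$. For the right side, the probabilistic interpretation of the Green's function in \lref{Gv} gives $\E[\text{visits of $\omega$ to $v$}] = c(v)\,G^{\G}_{0,v}$, and applying the Markov property at each visit (the walk steps to $w$ with probability $c_{v,w}/c(v)$) yields $\E[N_{v,w}(\omega)] = c_{v,w}\,G^{\G}_{0,v}$. Hence
$$ P^{\G}_{v,w} - P^{\G}_{w,v} = c_{v,w}\bigl(G^{\G}_{0,v} - G^{\G}_{0,w}\bigr). $$

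To conclude, I would send $\G\to\LL$ along any convenient sequence (the LERW-edge event is measurable in the unique weak limit). By section~\ref{Greens}, $G^{\G}_{0,u} = R - A^{\LL}_{0,u} + o(1)$ with the (diverging) resistance $R$ independent of $u$, so $R$ cancels in the difference and the right side tends to $c_{v,w}(-A^{\LL}_{0,v}+A^{\LL}_{0,w}) = c_{v,w}(\bar G_{0,v}-\bar G_{0,w})$, as claimed. Apart from the flow-conservation identity, everything is bookkeeping with the standard Green's-function formulas already recorded in the paper.
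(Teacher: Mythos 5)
Your finite-graph bookkeeping and the passage to the limit $\G\to\LL$ (the divergent resistance $R$ cancelling in the difference of Green's functions) are fine and consistent with the paper's framework, but the step you single out as the conceptual core is false as stated. The deterministic identity $N_{v,w}(\mathrm{LE}(\omega))-N_{w,v}(\mathrm{LE}(\omega))=N_{v,w}(\omega)-N_{w,v}(\omega)$ does not hold for every walk $\omega$: a closed walk has zero net flux across an entire edge \emph{cut} separating $v$ from $w$, but not across the single directed edge $(v,w)$. For instance, if the walk starts at $0=v$, goes $v\to w\to x\to v$ around a face, and then wanders to $\infty$ without touching $\{v,w\}$ again, the erased loop traverses $(v,w)$ once and $(w,v)$ never; the right-hand side is $1$ while the left-hand side is $0$. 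So the induction on excised loops you propose cannot succeed, because the claim already fails for a single excised loop.

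What is true, and what the lemma actually needs, is the identity \emph{in expectation}: the erased loops contribute zero net traversal of the directed edge $(v,w)$ on average. This is a probabilistic statement resting on reversibility, not a pathwise cancellation: a closed loop $v_0\to v_1\to\cdots\to v_k=v_0$ and its reversal carry the same weight, since $\prod_i c_{v_i,v_{i+1}}/c(v_i)$ is invariant under reversal when the walk is closed, so the erased cycles are as likely to be traversed in one direction as the other and their expected net flux through any directed edge vanishes. This is precisely the paper's (terse) justification ``since cycles are reversible''; repairing your argument means replacing the claimed pathwise conservation law with this expectation-level cancellation (e.g., via the decomposition of the walk into its loop erasure together with loops whose law is reversal-invariant). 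Your computation $\E[N_{v,w}(\omega)]=c_{v,w}G^{\G}_{0,v}$ for the simple random walk and the final limiting step are correct as written.
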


\begin{proof}
  Let the origin $0$ be at the center of a large box with wired
  boundary whose size we will send to infinity.  For simple random
  walk started at $0$, the expected number of traversals of $(v,w)$
  minus the expected number of traversals of $(w,v)$ is the edge
  conductance $c_{v,w}$ times the difference in Green's functions.
  The same also holds for loop-erased random walk started at $0$,
  since cycles are reversible.
\end{proof}

The intensity of the undirected edge $\{v,w\}$ is
$P^\LL_{\{v,w\}}=P^\LL_{v,w}+P^\LL_{w,v}$, and the undirected edge
intensities turn out to be nicer numbers than the directed edge
intensities.  The vertex intensities are easily calculated from the
undirected edge intensities, and given the potential kernel of $\LL$,
the directed edge intensities are easily recovered from the undirected
intensities.

\subsection{Computation of directed edge intensities}

Suppose $\G$ is a finite connected graph, $u$, $v$, $w$, and $r$ are
vertices of $\G$, and $(v,w)$ is an edge of $\G$.  If the path
connecting $u$ to~$r$ within a spanning tree of $\G$ passes through an
edge $(v,w)$ in the direction from $v$ to $w$, then deleting this edge
results in a grove of type $u,v\mid w,r$, and conversely, adjoining edge
$(v,w)$ to a grove of type $u,v\mid w,r$ yields a spanning tree of $\G$ in
which the path from $u$ to $r$ passes through the edge $(v,w)$ in the
direction from $v$ to $w$.  If $u$ is the vertex considered to be
the origin in $\G$ and $r$ is the wired boundary, then
\[ P^\G_{v,w} = \frac{Z[u,v\mid w,r]}{Z[u,v,w,r]} .\]

\begin{window}[3,r,\includegraphics[scale=0.33]{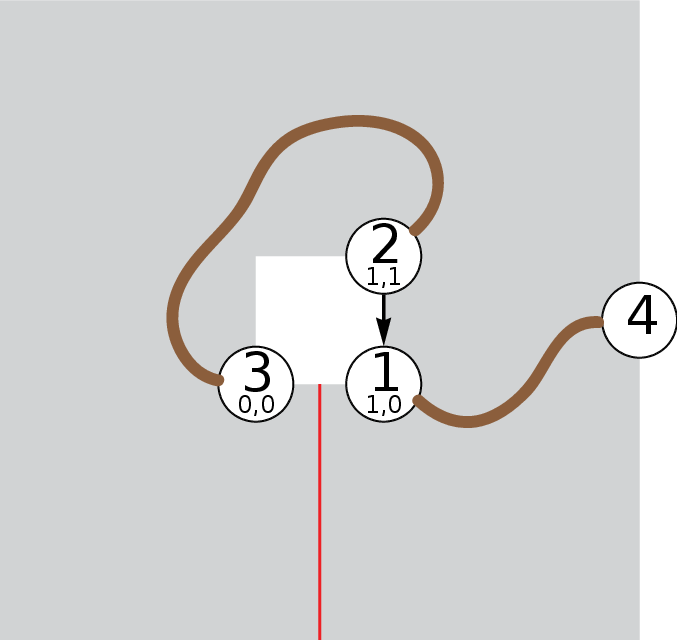},{}]
For example, consider the probability that the LERW from $(0,0)$ to
$\infty$ in $\Z^2$ uses the directed edge from $(1,1)$ to $(1,0)$.  As
described in section~\ref{sec:LERW-prelim}, we approximate $\Z^2$ by a
large finite grid $\G$ with wired boundary.  There are four vertices
of interest, so we declare them to be nodes.  Their coordinates are
$(1,0)$, $(1,1)$, $(0,0)$, and $\infty$, and we also refer to them as
nodes $1$, $2$, $3$, and~$4$.  Since $\G$ is planar and nodes $1$,
$2$, and $3$ bound the same face, we can view $\G$ as a surface graph
embedded on the annulus, and place a zipper from the central face to
the outer boundary.  This annular surface graph with four nodes is the
example we showed earlier in \fref{figure1}, and which we also
show schematically on the right.
Recall equation \eqref{Pc(23|14)} for annular-one graphs:
\end{window}
\[
\frac{Z[3,2|1,4]}{Z[1,2,3,4]}
                 = - G'_{1,2} - G'_{2,3} - G'_{3,1} +G^{\bar\G}_{2,3} - G^{\bar\G}_{1,3}.
\]
As discussed in section~\ref{sec:Gbar}, we can use either the Green's
function $G$ and its monodromy derivative $G'$ for the original graph
$\G$, or for the graph $\bar\G$ which has an auxillary vertex
connected to the boundary.
Using the method described in section~\ref{Greens}, we compute
\[
\begin{gathered}\bordermatrix[{[]}]{
\mathllap{\bar G^{\Z^2}}& (1,0)& (1,1)& (0,0)\\
\mathllap{(1,0)} & 0& -\frac{1}{4}& -\frac{1}{4}\\[2pt]
\mathllap{(1,1)} & -\frac{1}{4}& 0& -\frac{1}{\pi }\\[2pt]
\mathllap{(0,0)} & -\frac{1}{4}& -\frac{1}{\pi }& 0\\[2pt]
}\end{gathered}
\hspace{1.0in}
\begin{gathered}\bordermatrix[{[]}]{
\mathllap{\bar G'{}^{\Z^2}}& (1,0)& (1,1)& (0,0)\\
\mathllap{(1,0)} & 0& -\frac{3}{32}& -\frac{5}{32}\\[2pt]
\mathllap{(1,1)} & \frac{3}{32}& 0& -\frac{1}{2 \pi }\\[2pt]
\mathllap{(0,0)} & \frac{5}{32}& \frac{1}{2 \pi }& 0\\[2pt]
}\end{gathered}
\]
We evaluate $\lim_{\G\to\Z^2} P^\G_{(1,1),(1,0)}$ by substituting $\bar G^{\Z^2}$ for $\bar G^\G = G^{\bar\G}$ and
$\bar G'{}^{\Z^2}$ for $\bar G'{}^\G = (G^{\bar\G})'$:
\begin{equation} \label{Z2-1110}
\begin{aligned}
  P^{\Z^2}_{(1,1),(1,0)} &= \frac{3}{32} + \frac{1}{2\pi} - \frac{5}{32} -\frac{1}{\pi} + \frac{1}{4}
= +\frac{3}{16} -\frac{1}{2\pi}.
\end{aligned}
\end{equation}

The edge $(1,1)(1,0)$ was close enough to the origin for all three
node $(0,0)$, $(1,1)$, and $(1,0)$ to be incident to the same face,
which made it straightforward to apply our formulas for annular networks
to compute the edge intensity.  For edges further away, more work is required.

For an arbitrary directed edge $(v,w)$, we identify a set of edges to
cut so as to place the starting point $u$ of the LERW and the
endpoints of the edge on the same face, and call the cut graph $\tilde
G$.  The vertices $u$, $v$, and $w$, together with the endpoints of
all of the cut edges, comprise the nodes on the inner boundary of the
annulus.  We number these nodes in counterclockwise order so that the
zipper starts between nodes $1$ and $n-1$.  The vertex labeled
$\infty$ becomes node~$n$, which is on the other boundary of the
annulus.  The cut graph is an annular-one surface graph.

A grove of type $u,v\mid w,r$ in $\G$ may contain some of the cut edges,
and upon removing these edges, it becomes a grove of some other type
in $\tilde\G$.  We can enumerate
 all possible subsets of the cut edges
and all possible grove types $\sigma$ in $\tilde\G$
which
combine to form a grove of type $u,v\mid w,r$ in $\G$, and thereby express $Z^\G[u,v\mid w,r]$ as a
\begin{wrapfigure}[9]{r}{0pt}\smash{\raisebox{-85pt}{\includegraphics[scale=0.3]{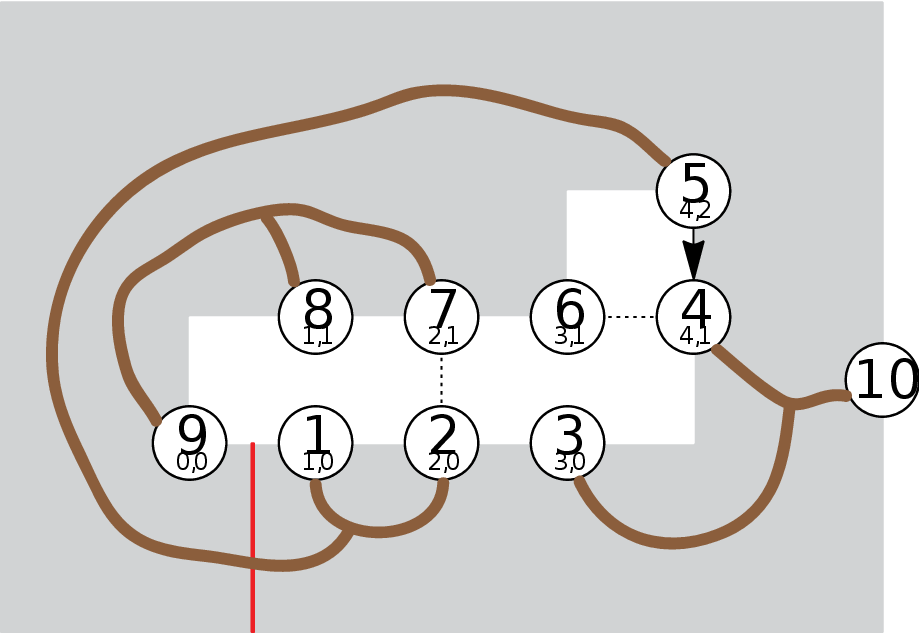}}}\end{wrapfigure}
linear combination of $Z^{\tilde\G}[\sigma]$'s.
For example, to compute the intensity of the directed edge $(4,2)(4,1)$ in $\Z^2$,
we can cut four edges, as shown at right.  In this case there are $10$ nodes,
and the grove $Z[1,2,5|3,4,10|6|7,8,9]$ together with cut-edges $(2,7)$ and $(4,6)$
are among those which combine to form a grove of type $Z[9,5|4,10]$ in the original graph.
Using
\tref{combination-partial-pairing}, for each such grove type $\sigma$
we can express $Z^{\tilde\G}[\sigma]$ as a linear combination of
$Z^{\tilde\G}[\tau]$'s, where the $\tau$'s are partial pairings of the
nodes of $\tilde\G$ in which node $n$ is paired.  For each such
partial pairing $\tau$, we can use \tref{partial-pairing-greens}
to express $Z^{\tilde\G}[\tau]/Z^{\tilde\G}$ as the $\zeta\to 1$ limit of a power of $1/(1-\zeta)$ times
a linear combination (with polynomial in $\zeta$ coefficients) of determinants
of matrices whose entries are of the form
\[G^{\tilde\G}_{i,j}+\frac{\zeta-1}{2} G'{}^{\tilde\G}_{i,j}. \]

We can then replace each Green's function entry $G^{\tilde\G}_{i,j}$
with the potential kernel $\bar{G}^{\tilde\G}_{i,j}$ and each
$G'{}^{\tilde\G}_{i,j}$ with $\bar{G}'{}^{\tilde\G}_{i,j}$.  We then
take the limit where $\G$ tends to the infinite lattice, which
replaces each $\bar{G}^{\tilde\G}_{i,j}$ with
$\bar{G}^{\tilde\LL}_{i,j}$ and each $\bar{G}'{}^{\tilde\G}_{i,j}$
with $\bar{G}'{}^{\tilde\LL}_{i,j}$, where $\tilde\LL$ is the infinite
lattice with some edges cut.  Each of these can then be evaluated in
closed form using the formulas in section~\ref{Greens}.  We then
multiply by $Z^{\tilde\LL}/Z^\LL$ using \eqref{Zcut} to obtain
$Z^{\LL}[u,v\mid w,\infty]/Z^{\LL}$, which is the directed edge intensity.

There are many steps in these computations, but the whole process can
be handled by computer, and it was not previously known that these
LERW intensities were computable or had a closed form expression.  We record the
results of our LERW intensity computations for the square lattice in
\fref{square-intensity}, for the honeycomb lattice in
\fref{hex-intensity}, and for the triangular lattice in
\fref{tri-intensity}.

\newcommand{\longrat}[2]{\genfrac{}{}{0pt}{}{\textstyle #1}{\textstyle #2}}
\newcommand{\rtthree}{{}\scalebox{.625}[1]{$\scriptstyle\sqrt{\scalebox{1.6}[1]{$\scriptstyle 3$}}$}}

\begin{figure}[b]
\begin{center}
\begin{tikzpicture}[scale=3.95392]
\begin{scope}[line width=2pt]
\draw [color=black!12.5](0., 1.)--(-0.1, 1.);
\draw [color=black!12.5](0., 1.)--(0., 2.);
\draw [color=black!12.5](0., 1.)--(1., 1.);
\draw [color=black!12.5](1., 0.)--(1., -0.1);
\draw [color=black!12.5](1., 0.)--(1., 1.);
\draw [color=black!12.5](1., 0.)--(2., 0.);
\draw [color=black!25.](0., 0.)--(0., -0.1);
\draw [color=black!25.](0., 0.)--(0., 1.);
\draw [color=black!25.](0., 0.)--(-0.1, 0.);
\draw [color=black!25.](0., 0.)--(1., 0.);
\draw [color=black!4.81025](3., 3.)--(3.1, 3.);
\draw [color=black!4.81025](3., 3.)--(3., 3.1);
\draw [color=black!5.3819](2., 3.)--(2., 3.1);
\draw [color=black!5.3819](2., 3.)--(3., 3.);
\draw [color=black!5.3819](3., 2.)--(3.1, 2.);
\draw [color=black!5.3819](3., 2.)--(3., 3.);
\draw [color=black!5.92688](1., 3.)--(1., 3.1);
\draw [color=black!5.92688](3., 1.)--(3.1, 1.);
\draw [color=black!5.94329](1., 3.)--(2., 3.);
\draw [color=black!5.94329](3., 1.)--(3., 2.);
\draw [color=black!6.21611](0., 3.)--(0., 3.1);
\draw [color=black!6.21611](3., 0.)--(3.1, 0.);
\draw [color=black!6.30464](2., 2.)--(2., 3.);
\draw [color=black!6.30464](2., 2.)--(3., 2.);
\draw [color=black!6.33144](0., 3.)--(-0.1, 3.);
\draw [color=black!6.33144](0., 3.)--(1., 3.);
\draw [color=black!6.33144](3., 0.)--(3., -0.1);
\draw [color=black!6.33144](3., 0.)--(3., 1.);
\draw [color=black!7.38557](1., 2.)--(1., 3.);
\draw [color=black!7.38557](1., 2.)--(2., 2.);
\draw [color=black!7.38557](2., 1.)--(2., 2.);
\draw [color=black!7.38557](2., 1.)--(3., 1.);
\draw [color=black!8.19988](0., 2.)--(0., 3.);
\draw [color=black!8.19988](2., 0.)--(3., 0.);
\draw [color=black!8.31641](0., 2.)--(-0.1, 2.);
\draw [color=black!8.31641](0., 2.)--(1., 2.);
\draw [color=black!8.31641](2., 0.)--(2., -0.1);
\draw [color=black!8.31641](2., 0.)--(2., 1.);
\draw [color=black!9.60831](1., 1.)--(1., 2.);
\draw [color=black!9.60831](1., 1.)--(2., 1.);
\path (0., 0.5) node[rotate=90]{\maxsizebox{3.95392cm}{!}{\resizebox{!}{18pt}{$\hspace{9pt}\frac{1}{4}\hspace{9pt}$}}};
\path (0., 1.5) node[rotate=90]{\maxsizebox{3.95392cm}{!}{\resizebox{!}{18pt}{$\hspace{9pt}\frac{1}{8}\hspace{9pt}$}}};
\path (0., 2.5) node[rotate=90]{\maxsizebox{3.95392cm}{!}{\resizebox{!}{18pt}{$\hspace{9pt}\frac{1}{8}{-}\frac{1}{2 \pi }{+}\frac{2}{\pi^2}{-}\frac{3}{\pi^3}{+}\frac{1}{\pi^4}\hspace{9pt}$}}};
\path (0.5, 0.) node[rotate=0]{\maxsizebox{3.95392cm}{!}{\resizebox{!}{18pt}{$\hspace{9pt}\frac{1}{4}\hspace{9pt}$}}};
\path (0.5, 1.) node[rotate=0]{\maxsizebox{3.95392cm}{!}{\resizebox{!}{18pt}{$\hspace{9pt}\frac{1}{8}\hspace{9pt}$}}};
\path (0.5, 2.) node[rotate=0]{\maxsizebox{3.95392cm}{!}{\resizebox{!}{18pt}{$\hspace{9pt}\frac{1}{2 \pi }{-}\frac{3}{4 \pi^2}\hspace{9pt}$}}};
\path (0.5, 3.) node[rotate=0]{\maxsizebox{3.95392cm}{!}{\resizebox{!}{18pt}{$\hspace{9pt}{-}\frac{1}{4}{+}\frac{2}{\pi }{-}\frac{15}{4 \pi^2}{+}\frac{4}{\pi^3}{-}\frac{10}{\pi^4}{+}\frac{8}{\pi^5}{+}\frac{4}{\pi^6}\hspace{9pt}$}}};
\path (1., 0.5) node[rotate=90]{\maxsizebox{3.95392cm}{!}{\resizebox{!}{18pt}{$\hspace{9pt}\frac{1}{8}\hspace{9pt}$}}};
\path (1., 1.5) node[rotate=90]{\maxsizebox{3.95392cm}{!}{\resizebox{!}{18pt}{$\hspace{9pt}\frac{1}{8}{-}\frac{1}{4 \pi }{+}\frac{1}{2 \pi^2}\hspace{9pt}$}}};
\path (1., 2.5) node[rotate=90]{\maxsizebox{3.95392cm}{!}{\resizebox{!}{18pt}{$\hspace{9pt}\frac{1}{8}{-}\frac{1}{2 \pi }{+}\frac{3}{2 \pi^2}{-}\frac{2}{\pi^3}{+}\frac{2}{\pi^4}\hspace{9pt}$}}};
\path (1.5, 0.) node[rotate=0]{\maxsizebox{3.95392cm}{!}{\resizebox{!}{18pt}{$\hspace{9pt}\frac{1}{8}\hspace{9pt}$}}};
\path (1.5, 1.) node[rotate=0]{\maxsizebox{3.95392cm}{!}{\resizebox{!}{18pt}{$\hspace{9pt}\frac{1}{8}{-}\frac{1}{4 \pi }{+}\frac{1}{2 \pi^2}\hspace{9pt}$}}};
\path (1.5, 2.) node[rotate=0]{\maxsizebox{3.95392cm}{!}{\resizebox{!}{18pt}{$\hspace{9pt}\frac{1}{8}{-}\frac{1}{2 \pi }{+}\frac{3}{2 \pi^2}{-}\frac{2}{\pi^3}{+}\frac{2}{\pi^4}\hspace{9pt}$}}};
\path (1.5, 3.) node[rotate=0]{\maxsizebox{3.95392cm}{!}{\resizebox{!}{18pt}{$\hspace{9pt}\frac{3}{16}{-}\frac{3}{4 \pi }{+}\frac{7}{8 \pi^2}{+}\frac{4}{3 \pi^4}{+}\frac{8}{\pi^6}\hspace{9pt}$}}};
\path (2., 0.5) node[rotate=90]{\maxsizebox{3.95392cm}{!}{\resizebox{!}{18pt}{$\hspace{9pt}\frac{1}{2 \pi }{-}\frac{3}{4 \pi^2}\hspace{9pt}$}}};
\path (2., 1.5) node[rotate=90]{\maxsizebox{3.95392cm}{!}{\resizebox{!}{18pt}{$\hspace{9pt}\frac{1}{8}{-}\frac{1}{2 \pi }{+}\frac{3}{2 \pi^2}{-}\frac{2}{\pi^3}{+}\frac{2}{\pi^4}\hspace{9pt}$}}};
\path (2., 2.5) node[rotate=90]{\maxsizebox{3.95392cm}{!}{\resizebox{!}{18pt}{$\hspace{9pt}\frac{1}{16}{-}\frac{5}{24 \pi }{+}\frac{13}{12 \pi^2}{-}\frac{7}{3 \pi^3}{+}\frac{4}{\pi^4}{-}\frac{8}{3 \pi^5}\hspace{9pt}$}}};
\path (2.5, 0.) node[rotate=0]{\maxsizebox{3.95392cm}{!}{\resizebox{!}{18pt}{$\hspace{9pt}\frac{1}{8}{-}\frac{1}{2 \pi }{+}\frac{2}{\pi^2}{-}\frac{3}{\pi^3}{+}\frac{1}{\pi^4}\hspace{9pt}$}}};
\path (2.5, 1.) node[rotate=0]{\maxsizebox{3.95392cm}{!}{\resizebox{!}{18pt}{$\hspace{9pt}\frac{1}{8}{-}\frac{1}{2 \pi }{+}\frac{3}{2 \pi^2}{-}\frac{2}{\pi^3}{+}\frac{2}{\pi^4}\hspace{9pt}$}}};
\path (2.5, 2.) node[rotate=0]{\maxsizebox{3.95392cm}{!}{\resizebox{!}{18pt}{$\hspace{9pt}\frac{1}{16}{-}\frac{5}{24 \pi }{+}\frac{13}{12 \pi^2}{-}\frac{7}{3 \pi^3}{+}\frac{4}{\pi^4}{-}\frac{8}{3 \pi^5}\hspace{9pt}$}}};
\path (2.5, 3.) node[rotate=0]{\maxsizebox{3.95392cm}{!}{\resizebox{!}{18pt}{$\hspace{9pt}\frac{1}{32}{-}\frac{1}{12 \pi }{+}\frac{13}{18 \pi^2}{-}\frac{14}{9 \pi^3}{+}\frac{40}{9 \pi^4}{-}\frac{64}{9 \pi^5}{+}\frac{32}{9 \pi^6}\hspace{9pt}$}}};
\path (3., 0.5) node[rotate=90]{\maxsizebox{3.95392cm}{!}{\resizebox{!}{18pt}{$\hspace{9pt}{-}\frac{1}{4}{+}\frac{2}{\pi }{-}\frac{15}{4 \pi^2}{+}\frac{4}{\pi^3}{-}\frac{10}{\pi^4}{+}\frac{8}{\pi^5}{+}\frac{4}{\pi^6}\hspace{9pt}$}}};
\path (3., 1.5) node[rotate=90]{\maxsizebox{3.95392cm}{!}{\resizebox{!}{18pt}{$\hspace{9pt}\frac{3}{16}{-}\frac{3}{4 \pi }{+}\frac{7}{8 \pi^2}{+}\frac{4}{3 \pi^4}{+}\frac{8}{\pi^6}\hspace{9pt}$}}};
\path (3., 2.5) node[rotate=90]{\maxsizebox{3.95392cm}{!}{\resizebox{!}{18pt}{$\hspace{9pt}\frac{1}{32}{-}\frac{1}{12 \pi }{+}\frac{13}{18 \pi^2}{-}\frac{14}{9 \pi^3}{+}\frac{40}{9 \pi^4}{-}\frac{64}{9 \pi^5}{+}\frac{32}{9 \pi^6}\hspace{9pt}$}}};
\node (0,0) {\contour{white}{origin}};
\end{scope}
\end{tikzpicture}
\end{center}
\vspace*{-12pt}
\caption{Undirected edge intensities of loop-erased random walk on
  $\Z^2$.  For $x\geq 1$, the edges $(x,x)(x,x-1)$ and
  $(x,x-1)(x+1,x-1)$ appear to have identical intensities,
  despite there being no lattice symmetry that would imply
  this.  The cases $x=1$ and $x=2$ are shown here, we have
  also checked the cases $x=3,4,5$.
}
\label{square-intensity}
\end{figure}

\begin{figure}[htpb]
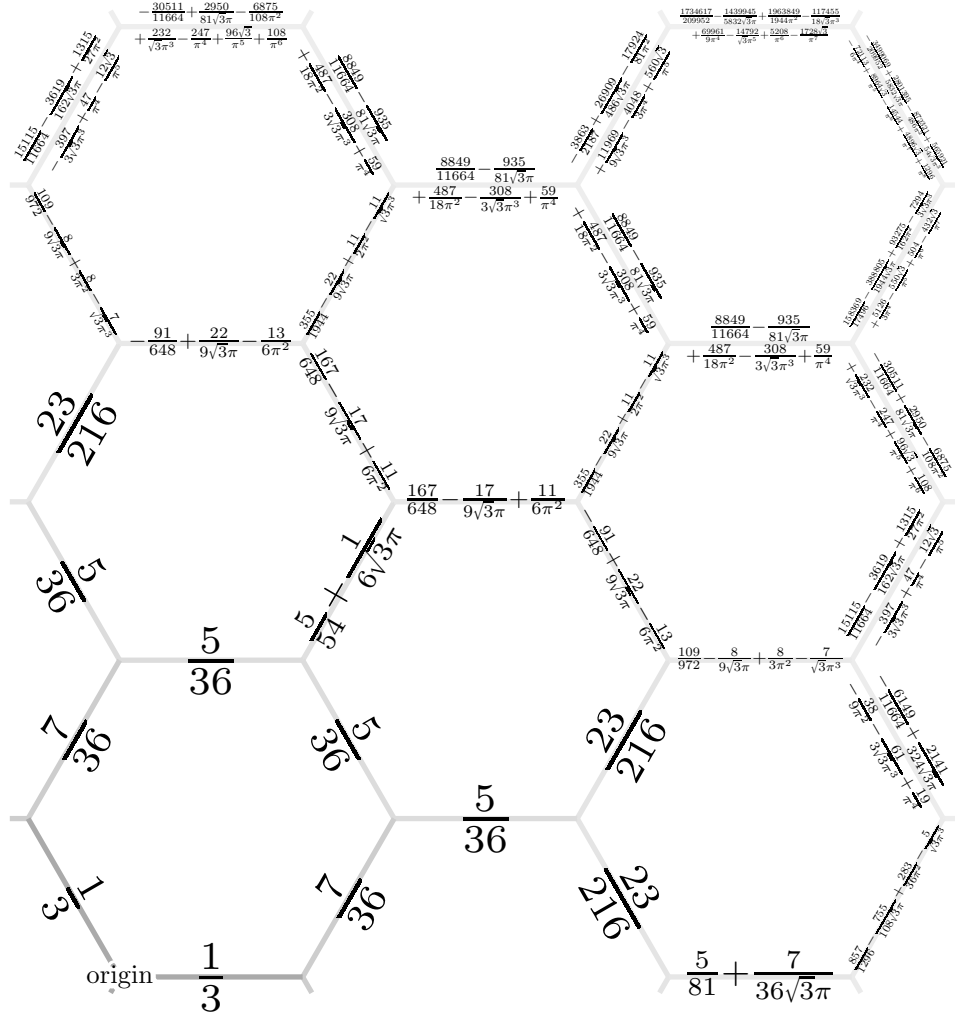

\begin{center}
\include{lerw-intensity-hexagonal}
\end{center}
\vspace*{-12pt}
\caption{Undirected edge intensity of loop-erased random walk on the
  honeycomb lattice.  Using coordinates for which vertex $(x,y)$ is
  the one located at position $(x-y/2,y\sqrt{3}/2)$, the edge
  intensities $(3x-1,3x-1)(3x-1,3x-2)$ and $(3x-1,3x-2)(3x,3x-2)$ are
  identical (for $x=1,2,3$), as are the intensities for edges
  $(3x,1)(3x+1,2)$ and $(3x,1)(3x,0)$ (for $x=1,2,3$ and perhaps all~$x$),
  despite there being no lattice symmetry that would imply this.}
\label{hex-intensity}
\end{figure}

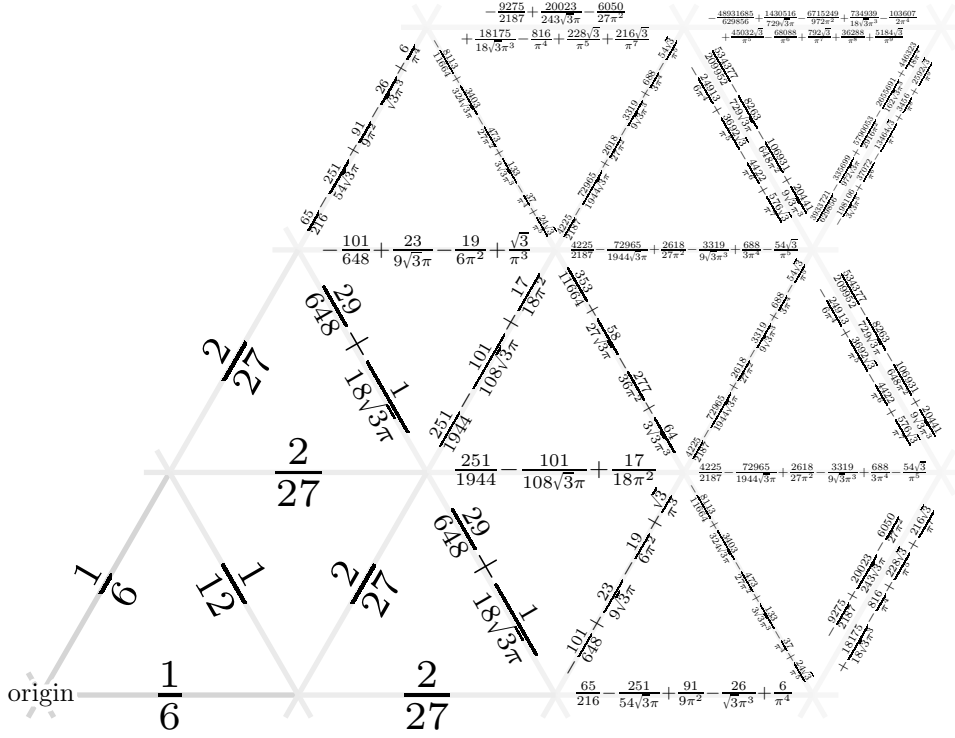
\begin{figure}[htpb]
\begin{center}
\begin{tikzpicture}[scale=3.41961]
\begin{scope}[line width=2pt]
\draw [color=black!16.6667](0., 0.)--(-0.05, -0.0866025);
\draw [color=black!16.6667](0., 0.)--(-0.05, 0.0866025);
\draw [color=black!16.6667](0., 0.)--(0.05, -0.0866025);
\draw [color=black!16.6667](0., 0.)--(-0.1, 0.);
\draw [color=black!16.6667](0., 0.)--(0.5, 0.866025);
\draw [color=black!16.6667](0., 0.)--(1., 0.);
\draw [color=black!2.88004](3.5, 2.59808)--(3.55, 2.68468);
\draw [color=black!2.88004](3.5, 2.59808)--(3.6, 2.59808);
\draw [color=black!2.9347](3.5, 2.59808)--(3.45, 2.68468);
\draw [color=black!2.98167](3.5, 2.59808)--(3.55, 2.51147);
\draw [color=black!3.28305](2.5, 2.59808)--(2.55, 2.68468);
\draw [color=black!3.28305](3.5, 0.866025)--(3.6, 0.866025);
\draw [color=black!3.28453](2.5, 2.59808)--(3.5, 2.59808);
\draw [color=black!3.28453](3.5, 0.866025)--(3.55, 0.952628);
\draw [color=black!3.32168](2.5, 2.59808)--(2.45, 2.68468);
\draw [color=black!3.32168](3.5, 0.866025)--(3.55, 0.779423);
\draw [color=black!3.37162](3., 1.73205)--(3.1, 1.73205);
\draw [color=black!3.37162](3., 1.73205)--(3.5, 2.59808);
\draw [color=black!3.47352](3., 1.73205)--(2.5, 2.59808);
\draw [color=black!3.47352](3.5, 0.866025)--(3., 1.73205);
\draw [color=black!3.7215](1.5, 2.59808)--(1.55, 2.68468);
\draw [color=black!3.7215](3., 0.)--(3.1, 0.);
\draw [color=black!3.73529](1.5, 2.59808)--(1.45, 2.68468);
\draw [color=black!3.73529](1.5, 2.59808)--(2.5, 2.59808);
\draw [color=black!3.73529](3., 0.)--(3.05, -0.0866025);
\draw [color=black!3.73529](3., 0.)--(3.5, 0.866025);
\draw [color=black!4.04103](2., 1.73205)--(2.5, 2.59808);
\draw [color=black!4.04103](2., 1.73205)--(3., 1.73205);
\draw [color=black!4.04103](2.5, 0.866025)--(3., 1.73205);
\draw [color=black!4.04103](2.5, 0.866025)--(3.5, 0.866025);
\draw [color=black!4.11652](1.5, 2.59808)--(1.4, 2.59808);
\draw [color=black!4.11652](2., 1.73205)--(1.5, 2.59808);
\draw [color=black!4.11652](3., 0.)--(2.5, 0.866025);
\draw [color=black!4.11652](3., 0.)--(2.95, -0.0866025);
\draw [color=black!4.26684](2.5, 0.866025)--(2., 1.73205);
\draw [color=black!4.86409](1., 1.73205)--(1.5, 2.59808);
\draw [color=black!4.86409](2., 0.)--(3., 0.);
\draw [color=black!4.87972](1., 1.73205)--(0.95, 1.81865);
\draw [color=black!4.87972](1., 1.73205)--(2., 1.73205);
\draw [color=black!4.87972](2., 0.)--(2.05, -0.0866025);
\draw [color=black!4.87972](2., 0.)--(2.5, 0.866025);
\draw [color=black!5.29426](1.5, 0.866025)--(2., 1.73205);
\draw [color=black!5.29426](1.5, 0.866025)--(2.5, 0.866025);
\draw [color=black!5.49629](1., 1.73205)--(0.9, 1.73205);
\draw [color=black!5.49629](1.5, 0.866025)--(1., 1.73205);
\draw [color=black!5.49629](2., 0.)--(1.5, 0.866025);
\draw [color=black!5.49629](2., 0.)--(1.95, -0.0866025);
\draw [color=black!7.40741](0.5, 0.866025)--(0.45, 0.952628);
\draw [color=black!7.40741](0.5, 0.866025)--(1., 1.73205);
\draw [color=black!7.40741](0.5, 0.866025)--(1.5, 0.866025);
\draw [color=black!7.40741](1., 0.)--(1.05, -0.0866025);
\draw [color=black!7.40741](1., 0.)--(1.5, 0.866025);
\draw [color=black!7.40741](1., 0.)--(2., 0.);
\draw [color=black!8.33333](0.5, 0.866025)--(0.4, 0.866025);
\draw [color=black!8.33333](1., 0.)--(0.5, 0.866025);
\draw [color=black!8.33333](1., 0.)--(0.95, -0.0866025);
\path (0.25, 0.433013) node[rotate=60]{\maxsizebox{3.41961cm}{!}{\resizebox{!}{18pt}{$\hspace{10pt}\frac{1}{6}\hspace{10pt}$}}};
\path (0.5, 0.) node[rotate=0]{\maxsizebox{3.41961cm}{!}{\resizebox{!}{18pt}{$\hspace{10pt}\frac{1}{6}\hspace{10pt}$}}};
\path (0.75, 0.433013) node[rotate=-60]{\maxsizebox{3.41961cm}{!}{\resizebox{!}{18pt}{$\hspace{10pt}\frac{1}{12}\hspace{10pt}$}}};
\path (0.75, 1.29904) node[rotate=60]{\maxsizebox{3.41961cm}{!}{\resizebox{!}{18pt}{$\hspace{10pt}\frac{2}{27}\hspace{10pt}$}}};
\path (1., 0.866025) node[rotate=0]{\maxsizebox{3.41961cm}{!}{\resizebox{!}{18pt}{$\hspace{10pt}\frac{2}{27}\hspace{10pt}$}}};
\path (1.25, 0.433013) node[rotate=60]{\maxsizebox{3.41961cm}{!}{\resizebox{!}{18pt}{$\hspace{10pt}\frac{2}{27}\hspace{10pt}$}}};
\path (1.25, 1.29904) node[rotate=-60]{\maxsizebox{3.41961cm}{!}{\resizebox{!}{18pt}{$\hspace{10pt}\frac{29}{648}{+}\frac{1}{18 \rtthree \pi }\hspace{10pt}$}}};
\path (1.25, 2.16506) node[rotate=60]{\maxsizebox{3.41961cm}{!}{\resizebox{!}{18pt}{$\hspace{10pt}\frac{65}{216}{-}\frac{251}{54 \rtthree \pi }{+}\frac{91}{9 \pi^2}{-}\frac{26}{\rtthree \pi^3}{+}\frac{6}{\pi^4}\hspace{10pt}$}}};
\path (1.5, 0.) node[rotate=0]{\maxsizebox{3.41961cm}{!}{\resizebox{!}{18pt}{$\hspace{10pt}\frac{2}{27}\hspace{10pt}$}}};
\path (1.5, 1.73205) node[rotate=0]{\maxsizebox{3.41961cm}{!}{\resizebox{!}{18pt}{$\hspace{10pt}{-}\frac{101}{648}{+}\frac{23}{9 \rtthree \pi }{-}\frac{19}{6 \pi^2}{+}\frac{\rtthree}{\pi^3}\hspace{10pt}$}}};
\path (1.75, 0.433013) node[rotate=-60]{\maxsizebox{3.41961cm}{!}{\resizebox{!}{18pt}{$\hspace{10pt}\frac{29}{648}{+}\frac{1}{18 \rtthree \pi }\hspace{10pt}$}}};
\path (1.75, 1.29904) node[rotate=60]{\maxsizebox{3.41961cm}{!}{\resizebox{!}{18pt}{$\hspace{10pt}\frac{251}{1944}{-}\frac{101}{108 \rtthree \pi }{+}\frac{17}{18 \pi^2}\hspace{10pt}$}}};
\path (1.75, 2.16506) node[rotate=-60]{\maxsizebox{3.41961cm}{!}{\resizebox{!}{18pt}{$\hspace{10pt}{-}\frac{8113}{11664}{+}\frac{3403}{324 \rtthree \pi }{-}\frac{473}{27 \pi^2}{+}\frac{133}{3 \rtthree \pi^3}{-}\frac{37}{\pi^4}{+}\frac{24 \rtthree}{\pi^5}\hspace{10pt}$}}};
\path (2., 0.866025) node[rotate=0]{\maxsizebox{3.41961cm}{!}{\resizebox{!}{18pt}{$\hspace{10pt}\frac{251}{1944}{-}\frac{101}{108 \rtthree \pi }{+}\frac{17}{18 \pi^2}\hspace{10pt}$}}};
\path (2.25, 0.433013) node[rotate=60]{\maxsizebox{3.41961cm}{!}{\resizebox{!}{18pt}{$\hspace{10pt}{-}\frac{101}{648}{+}\frac{23}{9 \rtthree \pi }{-}\frac{19}{6 \pi^2}{+}\frac{\rtthree}{\pi^3}\hspace{10pt}$}}};
\path (2.25, 1.29904) node[rotate=-60]{\maxsizebox{3.41961cm}{!}{\resizebox{!}{18pt}{$\hspace{10pt}\frac{353}{11664}{+}\frac{58}{27 \rtthree \pi }{-}\frac{277}{36 \pi^2}{+}\frac{64}{3 \rtthree \pi^3}\hspace{10pt}$}}};
\path (2.25, 2.16506) node[rotate=60]{\maxsizebox{3.41961cm}{!}{\resizebox{!}{18pt}{$\hspace{10pt}\frac{4225}{2187}{-}\frac{72965}{1944 \rtthree \pi }{+}\frac{2618}{27 \pi^2}{-}\frac{3319}{9 \rtthree \pi^3}{+}\frac{688}{3 \pi^4}{-}\frac{54 \rtthree}{\pi^5}\hspace{10pt}$}}};
\path (2., 2.59808) node[rotate=0]{\maxsizebox{3.41961cm}{!}{\resizebox{!}{18pt}{$\hspace{20.pt}\longrat{{-}\frac{9275}{2187}{+}\frac{20023}{243 \rtthree \pi }{-}\frac{6050}{27 \pi^2}}{{+}\frac{18175}{18 \rtthree \pi^3}{-}\frac{816}{\pi^4}{+}\frac{228 \rtthree}{\pi^5}{+}\frac{216 \rtthree}{\pi^7}}\hspace{20.pt}$}}};
\path (2.5, 0.) node[rotate=0]{\maxsizebox{3.41961cm}{!}{\resizebox{!}{18pt}{$\hspace{10pt}\frac{65}{216}{-}\frac{251}{54 \rtthree \pi }{+}\frac{91}{9 \pi^2}{-}\frac{26}{\rtthree \pi^3}{+}\frac{6}{\pi^4}\hspace{10pt}$}}};
\path (2.5, 1.73205) node[rotate=0]{\maxsizebox{3.41961cm}{!}{\resizebox{!}{18pt}{$\hspace{10pt}\frac{4225}{2187}{-}\frac{72965}{1944 \rtthree \pi }{+}\frac{2618}{27 \pi^2}{-}\frac{3319}{9 \rtthree \pi^3}{+}\frac{688}{3 \pi^4}{-}\frac{54 \rtthree}{\pi^5}\hspace{10pt}$}}};
\path (2.75, 0.433013) node[rotate=-60]{\maxsizebox{3.41961cm}{!}{\resizebox{!}{18pt}{$\hspace{10pt}{-}\frac{8113}{11664}{+}\frac{3403}{324 \rtthree \pi }{-}\frac{473}{27 \pi^2}{+}\frac{133}{3 \rtthree \pi^3}{-}\frac{37}{\pi^4}{+}\frac{24 \rtthree}{\pi^5}\hspace{10pt}$}}};
\path (2.75, 1.29904) node[rotate=60]{\maxsizebox{3.41961cm}{!}{\resizebox{!}{18pt}{$\hspace{10pt}\frac{4225}{2187}{-}\frac{72965}{1944 \rtthree \pi }{+}\frac{2618}{27 \pi^2}{-}\frac{3319}{9 \rtthree \pi^3}{+}\frac{688}{3 \pi^4}{-}\frac{54 \rtthree}{\pi^5}\hspace{10pt}$}}};
\path (2.75, 2.16506) node[rotate=-60]{\maxsizebox{3.41961cm}{!}{\resizebox{!}{18pt}{$\hspace{20.pt}\longrat{\frac{534377}{209952}{-}\frac{8263}{729 \rtthree \pi }{-}\frac{106931}{648 \pi^2}{+}\frac{20441}{9 \rtthree \pi^3}}{{-}\frac{24913}{6 \pi^4}{+}\frac{3692 \rtthree}{\pi^5}{-}\frac{4422}{\pi^6}{+}\frac{576 \rtthree}{\pi^7}}\hspace{20.pt}$}}};
\path (3., 0.866025) node[rotate=0]{\maxsizebox{3.41961cm}{!}{\resizebox{!}{18pt}{$\hspace{10pt}\frac{4225}{2187}{-}\frac{72965}{1944 \rtthree \pi }{+}\frac{2618}{27 \pi^2}{-}\frac{3319}{9 \rtthree \pi^3}{+}\frac{688}{3 \pi^4}{-}\frac{54 \rtthree}{\pi^5}\hspace{10pt}$}}};
\path (3.25, 0.433013) node[rotate=60]{\maxsizebox{3.41961cm}{!}{\resizebox{!}{18pt}{$\hspace{20.pt}\longrat{{-}\frac{9275}{2187}{+}\frac{20023}{243 \rtthree \pi }{-}\frac{6050}{27 \pi^2}}{{+}\frac{18175}{18 \rtthree \pi^3}{-}\frac{816}{\pi^4}{+}\frac{228 \rtthree}{\pi^5}{+}\frac{216 \rtthree}{\pi^7}}\hspace{20.pt}$}}};
\path (3.25, 1.29904) node[rotate=-60]{\maxsizebox{3.41961cm}{!}{\resizebox{!}{18pt}{$\hspace{20.pt}\longrat{\frac{534377}{209952}{-}\frac{8263}{729 \rtthree \pi }{-}\frac{106931}{648 \pi^2}{+}\frac{20441}{9 \rtthree \pi^3}}{{-}\frac{24913}{6 \pi^4}{+}\frac{3692 \rtthree}{\pi^5}{-}\frac{4422}{\pi^6}{+}\frac{576 \rtthree}{\pi^7}}\hspace{20.pt}$}}};
\path (3.25, 2.16506) node[rotate=60]{\maxsizebox{3.41961cm}{!}{\resizebox{!}{18pt}{$\hspace{20.pt}\longrat{\frac{3933721}{629856}{-}\frac{335699}{972 \rtthree \pi }{+}\frac{5790053}{2916 \pi^2}{-}\frac{2655691}{162 \rtthree \pi^3}{+}\frac{446323}{18 \pi^4}}{{-}\frac{198106}{3 \rtthree \pi^5}{+}\frac{37072}{\pi^6}{-}\frac{13464 \rtthree}{\pi^7}{+}\frac{3456}{\pi^8}{+}\frac{2592 \rtthree}{\pi^9}}\hspace{20.pt}$}}};
\path (3., 2.59808) node[rotate=0]{\maxsizebox{3.41961cm}{!}{\resizebox{!}{18pt}{$\hspace{20.pt}\longrat{{-}\frac{48931685}{629856}{+}\frac{1430516}{729 \rtthree \pi }{-}\frac{6715249}{972 \pi^2}{+}\frac{734939}{18 \rtthree \pi^3}{-}\frac{103607}{2 \pi^4}}{{+}\frac{45032 \rtthree}{\pi^5}{-}\frac{68088}{\pi^6}{+}\frac{792 \rtthree}{\pi^7}{+}\frac{36288}{\pi^8}{+}\frac{5184 \rtthree}{\pi^9}}\hspace{20.pt}$}}};
\node (0,0) {\contour{white}{origin}};
\end{scope}
\end{tikzpicture}
\end{center}
\vspace*{-12pt}
\caption{Undirected edge intensity of loop-erased random walk on the
  triangular lattice.  The edge-intensities of $(x,x)(x,x-1)$ and
  $(x,x-1)(x+1,x-1)$ are identical for $x=1,2,3$, and perhaps all $x$,
  despite there being no lattice symmetry that would imply
  this.}
\label{tri-intensity}
\end{figure}

\subsection{Loop-erased random walk on \texorpdfstring{$\Z\times\R$}{Z x R}}
\enlargethispage{12pt}

\old{
\begin{figure}[htpb]
\begin{center}
\includegraphics[scale=0.5]{ZR-tree}
\end{center}
\caption{Portion of uniform spanning tree and LERW from $(0,0)$ to $\infty$
 on $\Z\times(\Z/20)$, which approximates $\Z\times\R$.  The probability that
the LERW from $(0,0)$ passes through $(1,0)$ is $1/4-1/\pi^2$.}
\label{ZR-tree}
\end{figure}
}

We consider next a weighted version of $\Z^2$, where each horizontal
edge has weight $c$, and each vertical edge has weight $1/c$.  This
graph is isoradial, so we may compute the Green's function using
\cite{Kenyon.isoradial}.  Because the lattice is symmetric under a
$180^\circ$ rotation and invariant under translations, we can also
compute $G'$.  This gives us all the necessary information we need to
compute the probability that LERW from $(0,0)$ passes through $(1,0)$.
It is convenient to let $c=\tan\theta$.
After a computation similar to the ones above, we find that
the LERW passes through vertex $(1,0)$ with probability
\[
\frac{1}{4} + \frac{\theta}{2\pi} - \frac{\theta^2}{\pi^2 \sin^2\theta}\left(1-\frac{2\theta}{\pi}\right).
\]
When $\theta=\pi/4$, we have $c=1$, and this above probability reduces
to $5/16$, in agreement with our earlier calculation for $\Z^2$.

\enlargethispage{24pt}
In the isoradial embedding of the lattice into the plane, if the
horizontal edges have length~$1$, then the vertical edges have length
$c$.  An interesting special case is the limit $c\to 0$.  Then random
walk on this weighted graph converges to a standard Brownian motion in
the vertical direction, except at a Poisson set of times with
intensity $1$, where the walk jumps left or right with equal
probability.  The random walk on this graph is then a continuous-time
random walk on $\Z$ in the horizontal direction and a Brownian motion
on $\R$ in the vertical direction.  From the above formula, we see that in
this limit, the probability that the LERW passes through $(1,0)$
converges to $1/4-1/\pi^2$.

\appendix

\section{The annular matrix}

Recall the matrix that we introduced in
section~\sref{annular1-complete-pairing} for computing grove partition
functions for pairings in which $n-1$ nodes are on one boundary of an
annulus and the last node is on the other boundary.  The rows are
indexed by subsets of $\{1,\dots,n\}$ of size $n/2$ and the columns
are indexed by annular pairings.  Since $n$ is even and positive,
we let $k=n/2-1$.
In this appendix we derive the key
properties of these matrices that we use.  We review the
``cycle lemma'' in \aref{pairing-set}, which gives a canonical
association between the rows and columns of the matrix and simplifies
the subsequent analysis.  In \aref{inverse} we show that the matrix is
nonsingular and give a combinatorial description of the inverse.  In
\aref{no-higher-derivative} we derive a formula about the inverse
annular matrix which shows that the higher order derivatives of
$\L_{i,j}$ do not appear in the formulas for the grove partition
functions.  The determinant of the annular matrix is surprisingly
simple, it is a power of $1-\zeta$, so that while this fact is not
specifically used in the computation of grove partition functions, we
give a derivation in \aref{determinant}.

\pagebreak
\subsection{Annular pairings, subsets, and the cycle lemma}
\label{pairing-set}

Recall that a standard Dyck path of order~$k$ has $2k+1$ points,
numbered $0,1,\ldots,2k$, and $2k$ steps, numbered $1,\ldots,2k$,
where each step is either $+1$ or $-1$, and the partial sums are
non-negative.  Dyck paths are among the structures enumerated by the
$k$th Catalan number $\frac{(2k)!}{k!(k+1)!}$, and are in bijective
correspondence with non-crossing pairings of $\{1,\dots,2k\}$ (see
e.g., \cite[exercise~6.19(r,n)]{stanley}).

We define a ``cyclic Dyck path'' of order $k$ to have $2k+1$ points and $2k+1$ steps, with the points $0$ and $2k+1$ identified, and the steps numbered $1,\ldots,2k+1$, where one of the $2k+1$ steps is $0$ (the ``flat step'') and the other steps are $\pm1$ and define a standard Dyck path of order $k$ when read in cyclic order starting after the flat step.  Cyclic Dyck paths are in bijective correspondence with annular perfect matchings that have $2k+1$ nodes on one boundary (corresponding to the $2k+1$ steps) and one node ($2k+2$) on the other boundary.  The annular perfect matching pairs node $2k+2$ with the flat step of the cyclic Dyck path, and every $+1$ step is paired with its associated $-1$ step in the usual way for standard Dyck paths.

Given a cyclic Dyck path of order $k$, the set of down steps is a subset of
$\{1,\dots,2k+1\}$ of size $k$.
The ``cycle lemma'' bijection of Dvoretzky and Motzkin \cite{DM} (see also
\cite{MR1034142})
states that for each subset of $\{1,\dots,2k+1\}$ of size $k$, there is a
unique cyclic Dyck path giving rise to it in this way, i.e., the set of down
steps uniquely determines which of the remaining steps is the flat step.
In our setting, $k=n/2-1$,
in $\det\L_R^S$, the indices in $R$ give the locations of the up steps and
the flat step, and
the indices in $S$ give the locations of the down steps together with $2k+2$.
The flat step (in $R$) gets paired up with $2k+2$ (in $S$),
and the chords under
the Dyck path determine the rest of the pairing.  The following
example illustrates the bijection adapted to $(n-1,1)$-annular
pairings:
\[
 {}_{\{3,4,6,7,8,11\}}^{\{1,2,5,9,10,12\}}
 \!\Rightarrow\! \raisebox{-12pt}{\includegraphics[scale=0.43]{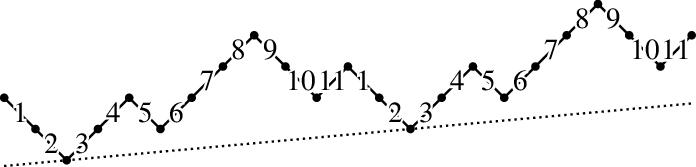}}
 \!\Rightarrow\! \raisebox{-12pt}{\includegraphics[scale=0.43]{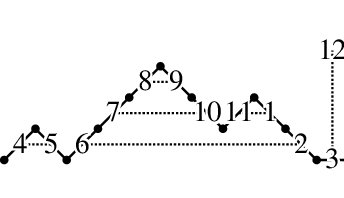}}
 \!\Rightarrow\!
 {}_4^5|{}_6^2|{}_{\;7}^{10}|{}_8^9|{}_{11}^{\,1}|{}_{\;3}^{12}
\]
In the reverse direction, we have
\[
 1,11|2,6|3,12|4,5|7,10|8,9
 \Rightarrow \raisebox{-2pt}{\includegraphics[scale=0.55]{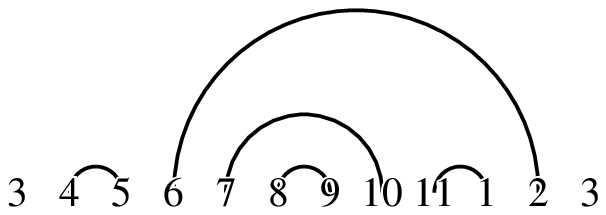}}
 \Rightarrow
 {}_4^5|{}_6^2|{}_{\;7}^{10}|{}_8^9|{}_{11}^{\,1}|{}_{\;3}^{12}
 \Rightarrow
 {}_{4,6,\;7,\;8,11,\;3}^{5,2,10,9,\,1,\;12}.
\]
Notice that we obtain the same $2\times (k+1)$ array of numbers, where the columns represent the annular pairing, and the rows represent the sets $R$ and $S$.

Suppose $S\subset\{1,\dots,2k+2\}$ is a set for which $2k+2\in S$ and $|S|=k+1$.
Suppose $\tau$ is a cyclic Dyck path of order $k$ (which we may interpret as an annular pairing).
We let $S\cdot\tau$ denote the number of up-steps of $\tau$ in $S$.
We say that an up-step of $\tau$ is wrapped if its corresponding down step has a smaller index.  We let $S:\tau$ denote the number of wrapped up-steps of $\tau$ in~$S$.
Define
\begin{equation} \label{eq:A}
  A_{S,\tau} =
 \begin{cases} (-1)^{S\cdot\tau} \zeta^{S:\tau} &
               \parbox{2.1in}{if $S\setminus\{2k+2\}$ is obtained by taking one endpoint from each chord in $\tau$,}\\[9pt]
               0 & \text{otherwise}.
\end{cases}
\end{equation}
Then $A_{S,\tau}$ is the entry of the annular matrix $\A_{2k+2}$ corresponding to
row $\det\L_R^S$ and column $\ddddot{\Zv}[\tau]$ (where $R=\{1,\dots,2k+2\}\setminus S$ and the indices of $R$ and $S$ ordered as described above).
When $A_{S,\tau}$ is nonzero, it can be rewritten as
\begin{equation} \label{eq:A-alternate}
A_{S,\tau} = (-1)^{S\cdot\tau}
  \zeta^{\text{\# indices in $S\setminus\{2k+2\}$ after $\tau$'s flat step}
   \,-\, \text{\# down steps of $\tau$ after $\tau$'s flat step}}.
\end{equation}

\subsection{Inverse annular matrix}\label{inverse}

Next we show that the inverse of the annular matrix can be expressed
in terms of objects known as ``cover-inclusive Dyck tilings''.
\begin{window}[11,r,\raisebox{-5pt}{\includegraphics{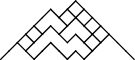}}\raisebox{-1in}{\rule{0pt}{1.4in}},{}]
\noindent
Dyck tilings were independently introduced by Kenyon and Wilson \cite{KW3} and
Shigechi and Zinn-Justin \cite{MR2927185}, and were studied
further in \cite{kim,KMPW,fisher-nadeau}.  For any pair of Dyck paths
$\lambda$ and $\mu$ of order $k$, if the path $\mu$ dominates
$\lambda$ in the sense that at each position $\mu$ is higher than
$\lambda$, then the region between $\lambda$ and $\mu$, denoted by
$\lambda/\mu$, is a (rotated) skew Young diagram, which can be
tiled by $\sqrt{2}\times\sqrt{2}$ squares rotated by $45^\circ$.  A
Dyck tile is obtained from a Dyck path by replacing each vertex of the
Dyck path with such a rotated $\sqrt{2}\times\sqrt{2}$ square, and
then gluing the squares together.
A cover-inclusive (c.i.) Dyck tiling of
$\lambda/\mu$ is a tiling of $\lambda/\mu$ by Dyck tiles such that the
Dyck paths associated to any two Dyck tiles either cover disjoint
portions of the horizontal axis, or the region covered by one tile is
a subset of the region covered by the other tile, with the larger tile
underneath the smaller tile.  The diagram at right shows an example
c.i.\ Dyck tiling of the region between two Dyck paths.
This definition extends naturally to cyclic Dyck paths $\lambda$ and $\mu$
with flat steps at the same location.
\end{window}

\begin{theorem}\label{thm:A-inverse}
Let $\lambda$ be a cyclic Dyck path of order~$k$, and let $S\subset\{1,\dots,2k+2\}$ have size $k+1$ and contain $2k+2$.  Define
\begin{multline} \label{eq:A-inverse}
  B_{\lambda,S}
  = \sum_\mu \,\,[\text{\rm\# of c.i.\ Dyck tilings of $\lambda/\mu$}] \,\times \\[-5pt]
   \zeta^{\text{\rm\# indices in $S$ at which $\mu$ has an up-step}
} \,\times \\
  \zeta^{-\text{\rm\# indices in $S\setminus\{2k+2\}$ after $\lambda$'s flat step}
   \,+\, \text{\rm\# down steps of $\lambda$ after $\lambda$'s flat step}}.
\end{multline}
Then the matrix $(B_{\lambda,S})/(1-\zeta)^k$ is the inverse of the annular matrix $\A_{2k+2}$.
Furthermore, $B_{\lambda,S}$ is a polynomial in $\zeta$ (i.e., no negative powers) of degree at most~$k$.
\end{theorem}

To prove this theorem we start with a lemma:
\begin{lemma}  \label{lem1}
Let $\tau$ and $\mu$ be cyclic Dyck paths of order $k$ on a $(2k+1)$-cycle,
with the flat step of $\mu$ at position $2k+1$.
\[
 \sum_{\parbox{1.03in}{\rm\scriptsize subsets $S$ obtained by taking one endpoint from each chord of $\tau$}}
  (-1)^{S\cdot\tau} \zeta^{S:\tau} \zeta^{S\cdot\mu}
 = \begin{cases} (-1)^{|\mu/\tau|} (1-\zeta)^k & \parbox{\widthof{\rm down some chords of $\tau$}}{\rm\setlength{\baselineskip}{2pt} if it is possible to push down some chords of $\tau$ to obtain $\mu$,} \\[14pt] 0 & \text{\rm otherwise}.\end{cases}\]
(If the flat steps of $\tau$ and $\mu$ are different, then it is not
possible to push down chords of $\tau$ to obtain $\mu$, so the second case applies.)
\end{lemma}
\begin{proof}
  Suppose that $\tau$'s flat step is also located at $2k+1$.  Then $S:\tau=0$ for any~$S$.
  There are two subcases:
  \begin{enumerate}
  \item Suppose each chord $\{i,j\}$ of $\tau$ connects an up-step of
    $\mu$ to a down-step of~$\mu$.  Then it is possible to ``push
    down'' some of the chords of $\tau$ to obtain $\mu$.  Let $S_0$ be
    the set of the down steps of $\mu$, so $S_0\cdot\mu=0$.  For any
    other set $S$ obtained from taking one endpoint from each chord
    of~$\tau$, $S\cdot\mu$ is precisely the number of chords of $\tau$
    on which $S$ and $S_0$ disagree.  Thus the sum is
    $(-1)^{S_0\cdot\tau} (1-\zeta)^k$.
    Now $S_0\cdot\tau$ is the number chords of $\tau$ that we push
    down to obtain~$\mu$.  Each time a chord of $\tau$ is pushed down,
    the area between the modified Dyck path and $\mu$ changes by an
    odd amount, so the parity of the area of $\mu/\tau$ is the parity of $S_0\cdot\tau$.
    \item
  Otherwise, there is some chord $\{i,j\}$ of $\tau$ for which both
  $i$ and $j$ are up steps in $\mu$.  For each set $S$ in the sum, let
  $S'$ be the symmetric difference of $S$ with $\{i,j\}$.  Then
  $\zeta^{S\cdot\mu}=\zeta^{S'\cdot\mu}$ and $\zeta^{S:\tau}=\zeta^{S':\tau}=1$, but
  $(-1)^{S\cdot\tau}=-(-1)^{S'\cdot\tau}$, so the sum is $0$ in this case.
  \end{enumerate}

  Next suppose that the flat step of $\tau$ differs from the flat step of $\mu$.  There are several subcases:
  \begin{enumerate}
  \item Suppose there is an unwrapped chord of $\tau$ at two up-steps or two down-steps of $\mu$.  Then by the argument of the previous paragraph the sum is~$0$.
  \item Suppose $\tau$ has a wrapped chord $(2k+1,j)$, and $\mu$ has an up-step at~$j$.
  Let $S'$ be the symmetric difference of $S$ with $\{2k+1,j\}$.
  Then the terms corresponding to $S$ and $S'$ add up to $0$.
  \item Suppose $\tau$ has an unwrapped chord $(i,2k+1)$, and $\mu$ has a down-step at $i$.
  Let $S'$ be the symmetric difference of $S$ with $\{i,2k+1\}$.
  Then the terms corresponding to $S$ and $S'$ add up to $0$.
  \item Suppose $\tau$ has a wrapped chord $(2k+1,j)$, and $\mu$ has a down-step at~$j$.
  Then the subpath of $\tau$ consisting of steps $1,\dots,j-1$ is a Dyck path, whereas the subpath of $\mu$ on the same interval has more up-steps than down-steps.  This implies subcase 1 occurs within this interval.
  \item Suppose $\tau$ has an unwrapped chord $(i,2k+1)$, and $\mu$ has an up-step at~$i$.
  Then the subpath of $\tau$ consisting of steps $i+1,\dots,2k$ is a Dyck path, whereas the subpath of $\mu$ on the same interval has more down-steps than up-steps, which again implies subcase 1 occurs.  \qedhere
  \end{enumerate}
\end{proof}

\begin{proof}[Proof of Theorem~\ref{thm:A-inverse}]
In an earlier article we proved \cite[Theorem~1.5]{KW3} that if $\lambda$ and $\tau$ are standard Dyck paths of order~$k$, then
\begin{multline} \label{eq:dyck-inverse}
   \sum_{\mu \text{ above $\lambda$}} (-1)^{|\lambda/\mu|} [\text{\# of c.i.\ Dyck tilings of $\lambda/\mu$}] \,\times \\ \begin{cases}1 & \parbox{\widthof{if it is possible to push down}}{if it is possible to push down some chords of $\tau$ to obtain $\mu$,} \\[5pt] 0 & \text{otherwise} \end{cases}
 \ = \ \begin{cases} 1 & \text{if $\lambda=\tau$,}\\[3pt]0&\text{otherwise.}\end{cases}
\end{multline}
When $\lambda$ and $\tau$ are cyclic Dyck paths of order $k$, this formula is still true: If $\lambda$ and $\tau$ have their flat steps at the same place, then it is a straightforward consequence of the formula for standard Dyck paths, and if $\lambda$ and $\tau$ have their flat steps in different places, the formula trivially holds since there are no $\mu$'s between $\lambda$ and~$\tau$.

In each nonzero summand of \eqref{eq:dyck-inverse}, $\lambda$ is
dominated by $\mu$ which is dominated by~$\tau$.  Since $|\lambda/\mu|
+ |\mu/\tau| = |\lambda/\tau|$, we can multiply both sides of
\eqref{eq:dyck-inverse} by $(-1)^{|\lambda/\tau|}$ to effectively replace
$(-1)^{|\lambda/\mu|}$ with $(-1)^{|\mu/\tau|}$.

If $\lambda$ has its flat step at position $2k+1$, then so does $\mu$,
so we can multiply both sides of \eqref{eq:dyck-inverse} by
$(1-\zeta)^k$ and then use Lemma~\ref{lem1} to replace
$(-1)^{|\mu/\tau|}\times(1-\zeta)^k\times$ the conditional expression
on the left-hand side with a summation over $S$:

\begin{multline*}
   \sum_{\mu \text{ above $\lambda$}} [\text{\# of c.i.\ Dyck tilings of $\lambda/\mu$}] \times
   \\
    \sum_{\parbox{\widthof{\scriptsize $S$: each chord of $\tau$}}{\scriptsize $S$: each chord of $\tau$ intersects $S$ once}}
     (-1)^{S\cdot\tau} \zeta^{S:\tau} \zeta^{S\cdot\mu}
   = \begin{cases} (1-\zeta)^k & \text{if $\lambda=\tau$,} \\ 0 & \text{otherwise}.
\end{cases}
\end{multline*}

Changing the order of summation, we obtain
\begin{multline} \label{eq:almost-proved}
  \sum_{\parbox{\widthof{\scriptsize $S$: each chord of $\tau$}}{\scriptsize $S$: each chord of $\tau$ intersects $S$ once}} (-1)^{S\cdot\tau} \zeta^{S:\tau}
  \sum_{\mu \text{ above $\lambda$}} [\text{\# of c.i.\ Dyck tilings of $\lambda/\mu$}]  \zeta^{S\cdot\mu}
  \\ = \begin{cases} (1-\zeta)^k & \text{if $\lambda=\tau$,} \\ 0 & \text{otherwise}.
\end{cases}
\end{multline}
Next we use formula~\eqref{eq:A} for $A_{S,\tau}$ and
definition \eqref{eq:A-inverse} for $B_{\lambda,S}$ (using that $\lambda$'s
flat step is at position $2k+1$)
to rewrite the summand of \eqref{eq:almost-proved}
as $A_{S,\tau} B_{\lambda,S}$.
Since $A_{S,\tau}$ is zero unless each chord of $\tau$ intersects $S$ once,
we can extend the summation to include all $S$, and obtain
\begin{equation}\label{eq:BA=1}
  \sum_S  B_{\lambda,S} A_{S,\tau} = \begin{cases} (1-\zeta)^k &   \text{if $\lambda = \tau$,} \\[9pt]
0 & \text{otherwise}.
\end{cases}
\end{equation}
for cyclic Dyck paths $\lambda$ and $\tau$ of order $k$ when $\lambda$ has its flat step at location $2k+1$.

Next we argue that \eqref{eq:BA=1} also holds when $\lambda$'s flat step is in other locations.
For a given $\lambda$, $S$, and $\tau$, suppose that we cycically decrease all the indices by the same amount modulo $2k+1$ (except $2k+2$, which indexes the node on the other boundary), to obtain $\lambda'$, $S'$, and $\tau'$.
When $B_{\lambda,S} A_{S,\tau}$ is nonzero, we see from \eqref{eq:A-inverse} and \eqref{eq:A-alternate} that
$B_{\lambda',S'} A_{S',\tau'}$ differs from it by a power of $\zeta$.
The indices in $S'$ after $\tau'$'s flat step correspond to the indices in $S$ after $\tau$'s flat step
together with the indices in $S'$ after $\lambda'$'s flat step, unless $\tau'$'s flat step gets wrapped around,
at which point the number of indices of $S'$ after $\tau'$ flat step drops by $k$.
The downsteps of $\tau'$ after $\tau'$'s flat step similarly correspond to the downsteps of $\tau$ after $\tau$'s flat
step together with the downsteps of $\tau'$ after $\lambda'$'s flat step, until $\tau'$'s flat step gets wrapped around,
at which point there is a similar jump by $k$.
Thus we have
\[B_{\lambda',S'} A_{S',\tau'} = B_{\lambda,S} A_{S,\tau} \times
\frac{\zeta^{\text{\# down steps of $\lambda'$ after $\lambda'$'s flat step}}}
{\zeta^{\text{\# down steps of $\tau'$ after $\lambda'$'s flat step}}}.
\]
Upon summing over $S'$, we see that we obtain \eqref{eq:BA=1} scaled by a power of $\zeta$,
which is still zero when $\lambda\neq\tau$, and the power of $\zeta$ is $1$ when $\lambda=\tau$.
Thus the identity \eqref{eq:BA=1} holds
for general cyclic Dyck paths $\lambda$ and $\tau$.

Next we check that $B_{\lambda,S}$ is a polynomial in $\zeta$.  Referring to \eqref{eq:A-inverse}, $\mu$ and $\lambda$ have their flat step in the same place.
Consider the indices in $S$ after $\lambda$'s flat step.  Each such index contributes a factor $\zeta^{-1}$ in \eqref{eq:A-inverse}, but also a factor of $\zeta$ if $\mu$ has an up step at that index.  But because $\mu$ dominates $\lambda$, for each such down step of $\mu$ after the flat step, there is also a down step of $\lambda$ after the flat step, which also contributes a factor of $\zeta$.  Thus $B_{\lambda,S}$ has no negative powers of $\zeta$.

Next we bound the degree of $B_{\lambda,S}$.
For each down step of $\lambda$ after $\lambda$'s flat step, there must also be an up step of $\lambda$, and hence also of $\mu$.  Each such up step of $\mu$ makes no net contribution to the power of $\zeta$, so the degree of $B_{\lambda,S}$ is at most $k$.
\end{proof}

\subsection{Cancellation of higher order derivatives}
\label{no-higher-derivative}
We start with an identity:

\begin{theorem}
Suppose $\sigma$ and $\tau$ are cyclic Dyck paths of order $k$, and $U\subset\{1,\dots,2k+2\}$, and $|U|\leq\ell<k$.
Then
  \begin{equation} \label{dalpha}
  \sum_S
   \left[\left(\frac{\zeta\,d}{d\zeta}\right)^{\ell-|U|}
   B_{\sigma,S}\right]_{\zeta=1}
   \times A_{S,\tau}\Big|_{\zeta=1}
   \times (-1)^{|S\cap U|} =0.
  \end{equation}
\end{theorem}

\begin{proof}
  For a general finite graph with $n=2k+2$ nodes and
  with general parallel transports, define $\ddddot\Zv[\sigma]$
  as in the case of a planar graph or annular-one graph.
  Let $R=\{1,\dots,n\}\setminus S$, and define
  \[D_R^S = \sum_\sigma A_{S,\sigma} \ddddot\Zv[\sigma].\]
  For annular graphs
  $D_R^S = \det\L_R^S$, but for general graphs these
  two quantities will be different.  We
  can ``recover'' the $\ddddot\Zv[\sigma]$'s from these
  $D_R^S$'s by multiplying by $\A_n^{-1}$:
  \begin{equation} \label{lHopital}
   \frac{\Zv[\sigma]}{\Zv[1|2|\cdots|n]} = \frac{1}{(1-\zeta)^k} \sum_{S} B_{\sigma,S}(\zeta) D_R^S ,
  \end{equation}
  where $k=n/2-1$ and $B_{\sigma,S}$ was defined in \eqref{eq:A-inverse}.

  Let us consider now the complete graph on $n$ nodes, so that
  $\L=-\Delta$ and $\L_{i,j}$ is the edge weight between nodes $i$ and
  $j$, times the parallel transport to $i$ from $j$.  Note that in
  this case $\L$ is (except for the diagonal entries) a general
  Hermitian matrix.  Each $D_R^S$ is a polynomial in
  the entries of $\L$, with coefficients that involve powers of
  $\zeta$.  (For the complete graph, any matrix times the vector of
  $\ddddot\Zv[\sigma]$'s will yield polynomials in the
  $\L_{i,j}$'s for $i\neq j$.)

  We change variables by setting $\zeta=e^t$.
  Then $\frac{d}{dt}
  \L_{j,i} = - \frac{d}{dt} \L_{i,j}$.  The $\zeta\to1$ limit is of
  course equivalent to $t\to 0$, and $(1-e^t)^k$ has a zero of
  order $k$ at $t=0$, with $\frac{d^k}{dt^k} (1-e^t)^k = (-1)^k k!$.

  For general nonzero edge weights and smooth (in $t$) parallel
  transports on the complete graph, $\Zv[\sigma]$ is finite and
  $\Zv[1|2|\cdots|n]=1$, so for any $\ell<k$ it must be that we get zero
  when we differentiate the numerator from \eqref{lHopital}, i.e.,
  $\sum_S B_{\sigma,S}(e^t) D_R^S$, $\ell$ times
  with respect to $t$ and then set $t$ to $0$:
  \[
  0 = \sum_{m=0}^\ell \binom{\ell}{m} \sum_{S} \frac{d^{\ell-m}}{dt^{\ell-m}}B_{\sigma,S}(e^t)\Big|_{t=0} \times \frac{d^{m}}{dt^{m}} D_R^S \Big|_{t=0}.
  \]
  Since $\L$ is generic, we can rescale the $m$th derivative of each $\L_{i,j}$ by a factor of $\beta^m$, and deduce that for each $m,\ell$ with $m\le \ell<k$
  \begin{equation} \label{dalphaddet}
  0 = \sum_{S} \frac{d^{\ell-m}}{dt^{\ell-m}}B_{\sigma,S}(e^t)\Big|_{t=0} \times \frac{d^{m}}{dt^{m}} D_R^S\Big|_{t=0}.
  \end{equation}

  Next we write $D_R^S()$ for the polynomial function
  of a Hermitian matrix, which, when evaluated on the response matrix
  $\L$ of the complete graph, gives $D_R^S=D_R^S(\L)$ (see \eqref{DRSLT}).  Let
  $d_i$ denote the differential operator for which $d_i
  \L_{i,j}=\frac{1}{2}\frac{d}{dt}\L_{i,j}$ and
  $d_i\L_{j,i}=\frac{1}{2}\frac{d}{dt}\L_{j,i}$ but $d_i \L_{h,j}=0$
  for $h,j\neq i$.  For the complete graph, each monomial of the
  polynomial $D_R^S$ includes each index $i$ exactly
  once.  (This also holds for annular graphs, though of course the
  polynomials are different.)  Using this property of these polynomials,
  we can write the $m$th derivative of $\L=\L(t)$ as follows:
  \begin{equation} \label{dkdetL}
   \frac{d^m}{dt^m} D_R^S(\L) = \sum_{i_1,i_2,\dots\in\{1,\dots,n\}} d_{i_1} \cdots d_{i_m} \; D_R^S(\L) .
  \end{equation}

  Given a set $U$ of $m$ nodes, for each $i\in U$ and each $j$ we
  rescale $\frac{d}{dt} \L_{i,j}\Big|_{t=0}$ by a factor of $\beta$,
  without changing any of the other derivatives at $t=0$.  If $i,j\in
  U$, then we rescale $\frac{d^2}{dt^2} \L_{i,j}\Big|_{t=0}$ by a
  factor of $\beta^2$.  (Recall that $\L$ is generic Hermitian, so we
  can do this.)  The coefficient of $\beta^m$ within the $m$th
  derivative is obtained from \eqref{dkdetL} by including only those terms for which
  $i_1,\dots,i_m$ is a permutation of~$U$.
  Then substituting \eqref{dkdetL} into \eqref{dalphaddet} and taking the coefficient of $\beta^m$,
  we find
  \begin{equation} \label{dalphadet}
  0 = \sum_{S} \frac{d^{\ell-m}}{dt^{\ell-m}}B_{\sigma,S}(e^t)\Big|_{t=0} \times D_R^S(\L^{(U)})\Big|_{t=0},
  \end{equation}
  where $\L^{(U)}$ where is the Hermitian matrix obtained from $\L$ by
  replacing $\L_{i,j}$ with $\frac{d}{dt} \L_{i,j}$ for each $i\in U$
  and $j\notin U$ or $i\notin U$ and $j\in U$, and replacing
  $\L_{i,j}$ with $\frac{d^2}{dt^2}\L_{i,j}$ for each $i,j\in U$.

  From our definition of $D_R^S$, for the complete graph we have
  \begin{equation}\label{DRSLT}
   D_R^S(\L^{(U)}) = \sum_\rho A_{S,\rho}
   \prod_{\substack{\{i,j\}\in\rho\\i\notin S,j\in S}} \L^{(U)}_{i,j},
  \end{equation}
  where the sum is over annular directed pairings $\rho$.
  Next we take the pairing $\tau$, and for each pair
  $\{i,j\}$ of $\tau$, we rescale $\L^{(U)}_{i,j}$ by a factor
  $\gamma$.  Then the coefficient of $\gamma^{n/2}$ in \eqref{dalphadet} only arises when
  $\rho=\tau$ in the above sum, so
  \begin{equation} \label{dalphadet2}
  0 = \sum_{S} \frac{d^{\ell-m}}{dt^{\ell-m}}B_{\sigma,S}(e^t)\Big|_{t=0} \times A_{S,\tau}\Big|_{t=0} \times \prod_{\substack{\{i,j\}\in\tau\\i\notin S,j\in S}} \L^{(U)}_{i,j}\Big|_{t=0},
  \end{equation}
  For each $S$, the product term in the formula takes the same (generically nonzero) value, except for a sign,
  which is given by the parity of $S\cap U$.
  So we cancel this factor (keeping the sign),
  and obtain \eqref{dalpha}.
\end{proof}

We now restate and prove \tref{L''}:
\begin{theorem}
  Suppose that an annular-one graph has $n$ nodes, and that $\sigma$
  is a partial pairing of the $\{1,\dots,n\}$ which has $k+1$ pairs, one of which
  contains $n$.  Then
  $Z_\sigma/Z_{1|2|\cdots|n}$ is a polynomial of
  degree $k+1$ in the quantities
  \[
  \{ L_{i,j}  : 1\leq i < j \leq n \}\quad\text{and}\quad
  \{ L'_{i,j} : 1\leq i < j \leq n-1 \}.
  \]
\end{theorem}

\begin{proof}
  Let $Q$ denote the singleton nodes of $\sigma$, and $T$ the unlisted
  / internal nodes.  Let $h(i)$ denote the $i$th element of
  $\{1,\dots,n\}\setminus(Q\cup T)$.  For $S\subset\{1,\dots,2k+2\}$ with
  $2k+2\in S$ and $|S|=k+1$, and $R=\{1,\dots,2k+2\}\setminus S$, the
  relevant determinants are of the form
  $D_R^S=\det\L_{h(R),T}^{h(S),T}$, and \eqref{lHopital} holds for these $D_R^S$'s.
  Thus
  \[
  \frac{Z[\sigma]}{Z[1|\cdots|n]} = \frac{(-1)^k}{k!}
  \sum_{m=0}^k \binom{k}{m} \sum_{S} \frac{d^{k-m}}{dt^{k-m}}B_{\sigma,S}(e^t)\Big|_{t=0} \times \frac{d^{m}}{dt^{m}} \det\L_{h(R),T}^{h(S),T}\Big|_{t=0}.
  \]
  We rewrite the $m$th derivative of $\det\L_{h(R),T}^{h(S),T}$ using the
  differential operators~$d_i$, as in \eqref{dkdetL}.  Let $U$ be the
  set of nodes for which we applied $d_i$ an odd number of times.  If
  for some $i$ we applied $d_i$ more than once, then the size of the
  set $U$ will be less than $m$, and then by \eqref{dalpha} (with $\tau=\sigma$),
  the coefficient of such terms is $0$.

  Next, suppose the variable $\L_{i,n}$ is differentiated.  Since $n$
  is in each set $h(S)$, we may as well replace $U$ with
  $U\setminus\{n\}$ and introduce a global sign, but then since $|U|$
  is smaller, we see from \eqref{dalpha} that the coefficient of such
  terms is $0$.
\end{proof}

\subsection{Determinant}\label{determinant}

\begin{theorem}
  The determinant of the annular matrix is
  \[\det \A_n = (1-\zeta)^{2^{n-2} - \frac12 \binom{n}{n/2}}.\]
\end{theorem}
\begin{proof}
Since $\det\A_n$ is a polynomial in $\zeta$, and the formula for
$\A_n^{-1}$ is well defined whenever $\zeta\neq 1$, it follows that
$\det\A_n$ can only have a root at $\zeta=1$.

\enlargethispage{12pt}
We split the original
zipper into $n-1$ zippers each with parallel transport $z^{1/(n-1)}$,
and then deform these zippers so that their endpoints lie in each
of the $n-1$ intervals between the nodes.  When we deform a zipper
across node $i(\neq n)$ in the counterclockwise direction, the parallel
transport from~$i$ to any other node~$j$ is multiplied by
$z^{1/(n-1)}$.  For each column of the annular matrix, say indexed by
directed pairing~$\sigma$, the column is scaled by $z^{\mp 1/(n-1)}$
according to whether node~$i$ is a source or destination in $\sigma$.
Likewise, each row of the annular matrix, say indexed by $\det\L_R^S$,
is scaled by $z^{\pm 1/(n-1)}$ according to whether $i\in R$ or $i\in S$.
The effect of deforming these zippers is to conjugate the annular matrix $\A_n$
by a diagonal matrix, yielding a new more symmetric matrix $\A^*_n$ for which
$\det\A^*_n = \det\A_n$.

We change variables to
\[ w = z^{2/(n-1)} = \zeta^{1/(n-1)}\]
so that the nonzero entries of $\A^*_n$ are integral powers of $w$.
For example, the first two rows of $\A^*_6$ are
\[\begin{gathered}\phantom{\rf{[{}_1^2|{}_3^4|{}_5^6]}}\\
\bordermatrix[{[]}]{
& \srf{\ddddot{\Zv}[{}_1^2|{}_3^4|{}_5^6]}& \srf{\ddddot{\Zv}[{}_2^3|{}_1^4|{}_5^6]}& \srf{\ddddot{\Zv}[{}_5^1|{}_2^3|{}_4^6]}& \srf{\ddddot{\Zv}[{}_1^2|{}_5^3|{}_4^6]}& \srf{\ddddot{\Zv}[{}_4^5|{}_1^2|{}_3^6]}& \srf{\ddddot{\Zv}[{}_5^1|{}_4^2|{}_3^6]}& \srf{\ddddot{\Zv}[{}_3^4|{}_5^1|{}_2^6]}& \srf{\ddddot{\Zv}[{}_4^5|{}_3^1|{}_2^6]}& \srf{\ddddot{\Zv}[{}_2^3|{}_4^5|{}_1^6]}& \srf{\ddddot{\Zv}[{}_3^4|{}_2^5|{}_1^6]}\\
\det\L_{\;1, 3, 5}^{2, 4, 6}& 1 & -w & 0 & 0 & -w & 0 & 0 & 0 & w^2 & -w^3 \\[2pt]
\det\L_{\;2, 1, 5}^{3, 4, 6}& 0 & 1 & 0 & 0 & 0 & 0 & 0 & w^4 & -w & 0 \\[2pt]
}
\end{gathered}\]
and the other rows are determined by cyclic rotations.

From the determinant formula for $\det\L_R^S$, we see that the diagonal entries of
$\A^*_n$ are all $1$.  Consider the column indexed by directed pairing
$\sigma$.  Since $\A^*_n$ is symmetric under cyclic rotations of the
indices $1,\ldots,n-1$, let us assume for convenience that $\sigma$
pairs $n-1$ to $n$, so that we can
write \[\sigma={}_{a_{1,0}}^{a_{1,1}}|\cdots|{}_{a_{n/2-1,0}}^{a_{n/2-1,1}}|{}_{n-1}^{\;n}.\]
Referring to the above bijection, since $n-1$ pairs to $n$, for each
$j$ we have $a_{j,0}<a_{j,1}$.  Column $\sigma$ contains $2^{n/2-1}$
nonzero entries, one for each sequence $f_1,\ldots,f_{n/2-1}$ of
$n/2-1$ $0$'s and $1$'s, where the row is indexed
by \[\det\L_{\displaystyle a_{1,f_1},\ldots,a_{n/2-1,f_{n/2-1}},n-1}^{\displaystyle a_{1,1-f_1},\ldots,a_{n/2-1,1-f_{n/2-1}},n}.\]
Since the pair $(a_{j,0},a_{j,1})$ crosses $a_{j,1}-a_{j,0}$ zippers,
this pair contributes $w^{f_j}$ to the matrix entry.  In particular,
all nonzero nondiagonal entries of $\A^*_n$ have positive powers of $w$.
This implies \[ \left.\det\A_n\right|_{\zeta=0} = \left.\det\A^*_n\right|_{w=0} = 1. \]

For a given column, the row that maximizes the power of $w$ is the one for
which $f_0,\ldots,f_{n/2-1}=1,\ldots,1$, and the power is the area
under the Dyck path.  The mapping from a column $\sigma$ to the row
$\det\L_R^S$ which has the highest power of $w$ is also a bijection,
in fact it is a simple variant of the cycle lemma bijection.
Hence the leading coefficient of the polynomial $\det\A^*_n$
is $\pm 1$, and the degree is
\[ \deg\det\A^*_n = (n-1)\times \sum_{\text{Dyck paths of length $n-2$}} \text{area under Dyck path}.\]
For Dyck paths of length $2 k = n-2$, the above sum is
(see Sloane's \href{http://oeis.org/A008549}{A008549})
\[4^k-\binom{2k+1}{k} = 2^{n-2} - \binom{n-1}{n/2-1} = 2^{n-2} - \frac12 \binom{n}{n/2}.\]

Because $\det\A_n$ has a root only at $\zeta=1$, the constant term is $1$,
and the degree is $2^{n-2} - \frac12 \binom{n}{n/2}$, the determinant formula
follows.
\end{proof}

\bibliographystyle{amsalpha}
\bibliography{54}
\end{document}